\documentclass[a4paper]{amsart} 
\usepackage[utf8]{inputenc}
\usepackage[ngerman, english]{babel}
\usepackage{color,soul}
\usepackage{enumerate}
\usepackage{hyperref} 
\usepackage{cleveref}
\usepackage{tabularx}
\usepackage{float}

% =====================================================
%  THEOREM STYLES
% =====================================================

\newtheorem{theorem}{Theorem}[section]
\newtheorem{definition}[theorem]{Definition}
\newtheorem{corollary}[theorem]{Corollary}
\newtheorem{proposition}[theorem]{Proposition}
\newtheorem{lemma}[theorem]{Lemma}
\newtheorem{remark}[theorem]{Remark}
\newtheorem{example}[theorem]{Example}

% =====================================================
% MATHEMATICAL SYMBOLS
% =====================================================

\newcommand{\R}{\mathbb{R}}
\newcommand{\C}{\mathbb{C}}
\newcommand{\N}{\mathbb{N}}

\newcommand{\K}{\mathbb{K}}

\newcommand{\ot}{\otimes}

%\vee
\newcommand{\bfK}{{\mathbf K}}

% =====================================================
% LINEAR ALGEBRA
% =====================================================

\DeclareMathOperator{\cone}{cone}
\DeclareMathOperator{\conv}{conv}

\DeclareMathOperator{\tr}{Tr}
\DeclareMathOperator{\maxsym}{MaxSym}

% =====================================================
% COMPLEX NUMBERS
% =====================================================

 % real part of complex number
 % imaginary part
 % the complex imaginary unit

% =====================================================
% REAL ALGEBRA & SOS
% =====================================================

% =====================================================
% GROUP THEORY
% =====================================================

% =====================================================
% MONIQUE'S DEFINITIONS
% =====================================================

\newcommand{\oR}{{\mathbb R}}

\newcommand{\oN}{{\mathbb N}}

\newcommand{\oS}{\mathbb {S}}

\newcommand{\ui}{\underline i}
\newcommand{\uj}{\underline j}
\newcommand{\uk}{\underline k}

\newcommand{\MQ}{{\mathcal Q}}
\newcommand{\MP}{{\mathcal P}}

\newcommand{\bfi}{{\mathbf i}}
\newcommand{\MFS}{\mathfrak S}
\newcommand{\vvec}{\text{vec}}
\newcommand{\PiMS}{\Pi_{\text{\rm MaxSym}}}
\newcommand{\MaxSym}{\text{\rm MaxSym}}

\newcommand{\ub}{\text{\rm ub}}

\newcommand{\Tr}{\text{\rm Tr}}

\newcommand{\sos}{\text{\rm sos}}
\newcommand{\spec}{\text{\rm sp}}
\newcommand{\ignore}[1]{}

\newcommand{\johnstonlovitz}{Lovitz and Johnston}
\newcommand{\val}{\text{\rm val}}

% =====================================================
% TROUBLESHOOTING
% =====================================================
\renewcommand{\epsilon}{\varepsilon} % since we used the two commands interchangeably, we should make sure that both display as the same. 
\newcommand{\MoL}{}
\newcommand{\ATB}{}
\newcommand{\tcolred}{}
\newcommand{\tcolblue}{}

% =====================================================
% TODO LISTS
% =====================================================
\usepackage{pifont}
%
%

 % assign a task to Alexander
 
 % assign a task to Monique

\subjclass{90C23, % Polynomial optimization
	90C22, % SDP
	81P45 % quantum information 
}
\keywords{polynomial optimization, sums of squares, quantum de Finetti, semidefinite programming, spectral bounds, moment problem} 

\begin{document}

\begin{abstract} 
We revisit the convergence analysis of two approximation hierarchies  for polynomial optimization on the unit sphere. The first one is based on the moment-sos approach and gives semidefinite bounds for which Fang and Fawzi (2021) showed an analysis in $O(1/r^2)$ for the $r$th level bound, using the polynomial kernel method. The second hierarchy was recently proposed by \johnstonlovitz{} (2023) and gives spectral bounds for which they show a convergence rate in $O(1/r)$, using a quantum de Finetti theorem of Christandl et al. (2007) that applies to complex Hermitian matrices with a ``double'' symmetry. We investigate links between these approaches, in particular, via duality of moments and sums of squares. 

Our main results include showing  that the spectral bounds cannot have a convergence rate better than $\mathcal{O}(1/r^2)$ and that they do not enjoy generic finite convergence. In addition, we propose alternative performance analyses  that involve explicit constants depending  on intrinsic parameters of the optimization problem. For this we develop a novel ``banded'' real de Finetti theorem that applies to real matrices with ``double" symmetry. We also show how to use the polynomial kernel method to obtain a de Finetti type result \tcolblue{in $O(1/r^2)$} for real maximally symmetric matrices, improving an earlier result \tcolblue{in $O(1/r)$} of Doherty and Wehner (2012). 
\end{abstract}

\title[Convergence analysis via de Finetti theorems]{Moment-sos and spectral hierarchies for polynomial optimization on the sphere and quantum de Finetti theorems}
\date{\today}
\author[Blomenhofer]{Alexander Taveira Blomenhofer}
\author[Laurent]{Monique Laurent}
\thanks{University of Copenhagen}
\thanks{Centrum Wiskunde \& Informatica (CWI), and Tilburg University}
\maketitle

\section{Introduction}\label{sec:introduction}

Throughout we consider the polynomial optimization problem, asking to minimize a polynomial $p$ over the unit sphere $\oS^{n-1}$, i.e., to compute
\begin{align}\label{eqpmin}
p_{\min}=\min_{x\in \oS^{n-1}}p(x)=\min\{p(x): x\in \R^n, \|x\|_2=1\}.
\end{align}
Therein, $p\in \R[x]=\R[x_1,\ldots,x_n]$ is an $n$-variate polynomial of degree $2d$. In other words,
$p_{\min}$ is the largest scalar $\lambda$ for which the polynomial $p -\lambda $ is nonnnegative on the unit sphere $\oS^{n-1}$. Replacing this nonnegativity condition by existence of a sum-of-squares representation leads to the well-known sum-of-squares (sos, for short) bounds: for an integer $r\ge d$,
\begin{align}\label{eqsosr}
\begin{split}
\sos_r(p)=\max_{\lambda\in \R} \{\lambda : p-\lambda =\sigma +u(1-\|x\|_2^2),
\  \sigma\in \R[x]_{\le 2r} \text{ sos},\ \tcolblue{u\in \R[x]_{2r-2}}\}.
\end{split}
\end{align}
The  parameters $\sos_r(p)$ provide a hierarchy of lower bounds that converge asymptotically to $p_{\min}$ (Lasserre \cite{Lasserre2001}). They are also often referred to as the moment-sos bounds (see Lemma \ref{lemsosmoment} for their `moment' reformulation).

\medskip
An alternative hierarchy of lower bounds $\spec_r(p)$ on $p_{\min}$ has been recently introduced by \johnstonlovitz{} 
 \cite{Johnston_Lovitz_Vijayaraghavan_2023} for homogeneous polynomials $p$, that are based on a (generalized) eigenvalue computation. 
Roughly speaking, these spectral bounds rely on very specific sums of squares representations of $ p-\lambda $ on the sphere, based on \emph{doubly symmetric} representing matrices. We refer to Section \ref{secJLV} for the precise definition of the spectral bounds. 
We have $\spec_r(p)\le \sos_r(p)\le p_{\min}$. So, the  sos bounds are stronger, but they are also more costly to compute than $\spec_r(p)$; indeed the latter boil down to an eigenvalue computation while the former rely on semidefinite programming. 

\medskip
Convergence analysis has been carried out for both hierarchies. Interestingly, there are common ingredients 
to both cases, like the the use of the polynomial kernel method  and the relevance to Quantum de Finetti type results.  We discuss the polynomial kernel method in \Cref{secrealQdF} and introduce the relevant Quantum de Finetti theorems in \Cref{secQdF}. However, there are also some important differences, as we now briefly sketch. 

\subsection*{Convergence analysis}

\johnstonlovitz{} \cite{Johnston_Lovitz_Vijayaraghavan_2023} show the following convergence rate $p_{\min}-\spec_r(p)=O(1/r)$ for their spectral bounds (see Theorem \ref{theoJLV}).
A crucial ingredient in their proof is the Quantum de Finetti (abbreviated as QdF) theorem of Christandl et al. \cite{Christandl_2007_deFinetti_oneandhalf} for {\em doubly symmetric} complex Hermitian matrices (see Theorem~\ref{theoQdF}). Since this result is stated for complex matrices, additional work is needed to be able to reach a conclusion for the spectral bounds, which deal with \emph{real} polynomials. The strategy adopted in \cite{Johnston_Lovitz_Vijayaraghavan_2023} is to establish a relation between the real optimization problem (\ref{eqpmin})  and its complex analog (asking to minimize a polynomial on the complex unit sphere) to which one can then apply the above mentioned QdF theorem. The $O(1/r)$ rate in the QdF theorem results in the $O(1/r)$ convergence rate for the spectral bounds $\spec_r(p)$.

\medskip
The convergence rate of the sos bounds in (\ref{eqsosr}) has been analyzed by Fang and Fawzi \cite{Fang_Fawzi_2020} who show  $p_{\min}-\sos_r(p)=O(1/r^2)$ when $p$ is   homogeneous (see Theorem~\ref{theoFF}). This improves the earlier convergence analyses in $O(1/r)$ by Reznick \cite{Reznick_1995} and Doherty and Wehner~\cite{Doherty_Wehner_2013}. 

These results follow roughly a similar approach, but there are some subtle differences. A common ingredient in both proofs is the use of the {\em polynomial kernel method}, a well-known method for constructing good approximations of a given function via convolution by a given  polynomial kernel. The difference lies in the selection of the polynomial kernel, which relies on a kernel of the form $(x^Ty)^{2r}$ in \cite{Reznick_1995, Doherty_Wehner_2013} and on a suitably optimized kernel in \cite{Fang_Fawzi_2020}. 
Fang and Fawzi \cite{Fang_Fawzi_2020} use their polynomial kernel to construct a   sos polynomial representation that yields a feasible solution for the program (\ref{eqsosr}), leading to their convergence analysis. 

On the other hand, Doherty and Wehner \cite{Doherty_Wehner_2013} work in the symmetric tensor setting. Additionally, they use their kernel to derive a result on the dual moment side that leads to a `real Quantum de Finetti' type result (see Theorem \ref{theoDW}), which they then exploit to show their convergence analysis for the sos bounds. However, their real QdF theorem applies to real {\em maximally symmetric} matrices, a  symmetry property that is much stronger than being doubly symmetric (see Definition \ref{defmaximallysymmetric}).

\subsection*{Our contributions}
As sketched above, there are similarities and differences between the sos and spectral bounds and their analyses.
This raises several questions that we address in this paper.

\smallskip
A first question is whether the QdF theorem of \cite{Christandl_2007_deFinetti_oneandhalf}  (Theorem \ref{theoQdF}) admits an analog for {\em real} doubly symmetric matrices. A positive answer would immediately imply a 
simple proof of the Doherty-Wehner real QdF (Theorem \ref{theoDW}).  As a first minor contribution, we give a negative answer:   it is not possible to approximate a sequence of real $r$-doubly symmetric matrices (in fact, their partial traces) by real separable ones (see \Cref{prop:nonexistence-of-real-qdf} for the precise statement). We show this using a family of counterexamples inspired by Caves et al. \cite{Caves_Fuchs_Schack_2002}.

\smallskip
A second question that we address in Section \ref{secrealQdF} is whether one can improve the analysis in the real QdF theorem (Theorem \ref{theoDW}) of   \cite{Doherty_Wehner_2013}. We  give a positive answer (see Theorem \ref{theoDWr2}). For this, we make the simple but important observation that 
the polynomial kernel method can also be used to show approximation results on the {\em dual moment side},   by considering {\em adjoint  kernel operators}: we indicate how the kernel operator of Fang and Fawzi \cite{Fang_Fawzi_2020} can be used to strengthen the analysis of Doherty and Wehner \cite{Doherty_Wehner_2013} from $O(1/r)$ to $O(1/r^2)$.  

\smallskip
\MoL{As a third contribution, we further investigate the spectral bounds. We show in Section \ref{sec:no-finite-convergence} that, unlike the moment-sos bounds, they do not enjoy generic finite convergence (Theorem \ref{thm:no-finite-convergence-n}).}
Another natural question is whether the analysis of \cite{Johnston_Lovitz_Vijayaraghavan_2023} in $O(1/r)$ for the spectral parameters can possibly be improved and what is a {\em lower bound} on their exact convergence rate. As our \tcolblue{fourth and main technical} contribution, we show in Section~\ref{secChoiLam} that $p_{\min}-\spec_r(p)=\Omega(1/r^2)$ for  a suitably crafted $5$-variate form of degree 4  (that we construct from the Choi-Lam form, well-known to be nonnegative but not sos). So, there is still a gap between the lower bound $\Omega(1/r^2)$ and the upper bound $O(1/r)$ and it would be very interesting to close this gap. In addition, at this moment, we are not aware of any lower bound for the convergence rate of the sos bounds; showing such a lower bound would also be very interesting.

\smallskip
In addition, we revisit in Section \ref{secbandedQdF} the convergence analysis in $O(1/r)$ for the spectral bounds  from \cite{Johnston_Lovitz_Vijayaraghavan_2023}.
Rather than going via optimization on the complex unit sphere, 
we propose to directly use the QdF theorem to derive a `banded' real QdF theorem (see Theorem~\ref{thm:banded-qdf}). Roughly, 
this result approximates the symmetrized partial trace of a real doubly symmetric positive semidefinite matrix by a {\em scaled} real separable matrix. \tcolblue{So, in a nutshell, the banded real QdF theorem interpolates between the 
QdF theorem (Theorem \ref{theoQdF}, that applies to complex Hermitian, doubly symmetric matrices) and the real QdF (Theorem \ref{theoDW}, that applies to real maximally symmetric matrices). It thus falls within the realm of {\em real} quantum theory, where physical phenomena are modelled using real numbers that to many physicists appear more natural than complex ones. Whether real numbers suffice to model quantum theory is debated. In some settings they do, but  Renou et al. \cite{Renou-Nature} show that different predictions may be obtained when restricting to real numbers in certain (so-called entanglement-swapping) network scenarios.
}

The new banded real QdF theorem (Theorem~\ref{thm:banded-qdf}) can then be directly  used to recover the convergence rate in $O(1/r)$ of \cite{Johnston_Lovitz_Vijayaraghavan_2023} (recalled in Theorem \ref{theoJLV}) for the spectral bounds $\spec_r(p)$, \tcolblue{albeit with other constants than in \cite{Johnston_Lovitz_Vijayaraghavan_2023}. First, in Theorem~\ref{thm:analysis-spectral-hierarchy}, our error analysis involves an explicit constant depending on $n$ and $d$, instead of the unknown parameter $\kappa(M)$ (the condition number of the maximally symmetric matrix representing $s^d$) used in \cite{Johnston_Lovitz_Vijayaraghavan_2023}. Second, in Theorem~\ref{thm:pmax-pmin-spectral}, our analysis   involves the range   $p_{\max}-p_{\min}$ of values that the polynomial $p$ takes on the sphere, instead of other  constants of $p$.} 
\tcolblue{This dependency on the   range of values is   used, e.g., in the error analysis of the moment-sos bounds (see Theorem~\ref{theoFF}), and it is commonly used in continuous optimization (see, e.g., \cite{Vavasis1992,deKlerk_Laurent_Parrilo_2005}). Note that it is well-behaved under  
 translating $p$ by a scalar multiple of $s^d$: The value range does not change while the  constant $\|Q(p)\|_\infty$ (used in \cite[Theorem 4.1]{Johnston_Lovitz_Vijayaraghavan_2023}, see Theorem~\ref{theoJLV}) or $\gamma(Q(p))$ (from Theorem \ref{thm:analysis-spectral-hierarchy}) grows with the scaling factor 
 (see  Appendix \ref{appendix-bad-gamma}).}

\subsection*{Some background on existing literature}

Problem (\ref{eqpmin})  is  computationally hard already for homogeneous polynomials of degree 4. Indeed, it contains some known hard optimization problems, such as the maximum stable set graph problem (due to a reformulation of Motzkin and Straus \cite{Motzkin_Straus}), or deciding convexity of  a quartic homogeneous polynomial, shown to be NP-hard in \cite{AOPT2013}. See, e.g., \cite{deKlerk2008} for more about complexity issues. Huang \cite{Huang2023} shows that the sos hierarchy has finite convergence for generic polynomials.
Polynomial optimization on the unit sphere is relevant to a broad range of applications, e.g., for modelling matrix norms $\|\cdot\|_{2\to p}$, for computing  eigenvalues of  tensors (see, e.g., \cite{Lim}), for capturing entanglement versus separability of states in quantum information theory (see, e.g., \cite{Doherty_Wehner_2013}, \cite{Fang_Fawzi_2020}, \cite{Gribling_Laurent_Steenkamp}, and further references therein).  

\medskip
Prior to the spectral lower bounds $\spec_r(p) \le p_{\min}$ of \cite{Johnston_Lovitz_Vijayaraghavan_2023} for optimization on the sphere,  another spectral approach had  been introduced earlier for minimization  over a general compact set $S$ by  Lasserre \cite{Lasserre2011}.  \tcolblue{He defines  {\em upper bounds} $\ub_r(p)\ge p_{\min}$, obtained by minimizing  the  expected value $\int_Spqd\mu$ of $p$ on $S$, taken over all sos polynomials $q$ of degree at most $2r$ such that $\int_S qd\mu=1$, where $\mu$ is a given measure supported on $S$. 
Computing $\ub_r(g)$ boils down to an eigenvalue computation \cite{Lasserre2011}.
These spectral upper bounds have been analyzed:   a convergence rate in $O(1/r^2)$   for   $\ub_r(p)-p_{\min}$ is shown for the interval $S= [-1,1]$ in \cite{deKlerk-Laurent-2020}
 (by establishing links to extremal roots of orthogonal polynomials), 
for the sphere  in \cite{deKlerk-Laurent-2022}, and  for many more sets $S$  in \cite{Slot-Laurent-2022}. 
Interestingly, there is an intimate connection between the spectral upper bounds and the sos lower bounds, which is (implicitly) exploited in the analysis of the sos lower bounds for the sphere in \cite{Fang_Fawzi_2020} and, more explicitly, for  the boolean cube in \cite{Slot-Laurent-2023} and the ball and the simplex in \cite{Slot2022}.}

\medskip
\MoL{Finally, let us briefly mention some background on (quantum) de Finetti type results.  
De Finetti's theorem is a classical result about infinite sequences of random variables. In its original form \cite{deFinetti1969}, it roughly says the following: Assume the joint probability distribution of an infinite sequence of random variables $ X_1, X_2,\ldots $ is invariant under permutations, in the sense that $ X_{\sigma(1)},X_{\sigma(2)},\ldots $ has the same joint distribution for any bijective map $ \sigma\colon\N\to \N $ (also known as being {\em exchangeable}). Then, the joint distribution of $ X_1, X_2, X_3,\ldots $ is a convex sum of product distributions. }

\MoL{Diaconis and Freedman \cite{Diaconis_Freedman_1980_deFinetti} showed a finite version of this theorem: If   $ X_1,\ldots,X_r $ is an exchangeable sequence of $r$ random variables with joint density $ p_{r}(x_1,\ldots,x_r) $, then, for any $ d\le r $,  the joint density $ p_{d}(x_1,\ldots,x_d) $ of the first $ d $ random variables may be approximated in a suitable norm by a convex combination of product densities $ q(x_1) \cdot \ldots \cdot q(x_d) $, where the $ q $'s are   univariate density functions. The error in the approximation is of the form $  {O}(\frac{1}{r}) $ for constant $d$. See, e.g., \cite[Section 6]{Doherty_Wehner_2013}. }

\MoL{There exist ``quantum'' versions of de Finetti's theorem, both for the infinite and   finite settings. In \Cref{theoQdF}, we will present a finite quantum de Finetti theorem due to Christandl et al. \cite{Christandl_2007_deFinetti_oneandhalf}. It allows to approximate the partial trace of a complex density matrix $ M\colon (\C^n)^{\ot r} \to (\C^n)^{\ot r}  $ with row and column permutation invariance (i.e., double symmetry) by a convex combination of product states.
The approximation error is $  {O}(\frac{1}{r}) $, just as for the Diaconis-Freedman result \cite{Diaconis_Freedman_1980_deFinetti}, and $  {O}(\frac{1}{r}) $ is known to be optimal for both results. An explanation of the philosophical aspects behind the classical and the quantum de Finetti theorem and its significance for quantum information is given by Caves, Fuchs and Schack \cite{Caves_Fuchs_Schack_2002}. The paper by Caves, Fuchs and Schack also serves as a friendly introduction to de Finetti theorems in quantum information. 
} 

\MoL{Finally, let us mention the work of Navascu\'es, Owari and Plenio \cite{Navascues_2009_deFinetti_ppt} who show a finite quantum de Finetti result with an improved rate in $O(1/r^2)$ for quantum bipartite states with a positive semidefinite partial transpose (aka the PPT condition).}

\MoL{We will mention in  Section \ref{secfinal} some close connections between de Finetti type results and  the classical moment problem.
}

\subsection*{Organization of the paper}
In Section \ref{sec:prelims} we gather definitions and preliminaries needed for the paper, about polynomials, symmetric tensors, linear functionals, quantum de Finetti theorems, and harmonic polynomials. In Section \ref{secrealQdF} we show  an improved real quantum de Finetti theorem for maximally symmetric matrices. 
We present a new `banded' analog for real doubly symmetric matrices in Section~\ref{secbandedQdF}, which we use to provide   alternative analyses of the spectral bounds  in Section~\ref{secJLV}. In the same section, we also prove a lower bound on the convergence rate of the spectral hierarchy \MoL{and we show that the spectral hierarchy does not enjoy generic finite convergence.}  We close the paper with some discussion in Section \ref{secfinal}.

\section{Preliminaries}\label{sec:prelims}

We begin with some notation that is used throughout the paper. $\N=\{0,1,2,\ldots\}$ is the set of nonnegative integers. For an integer $d\in \N$, $\N^n_d$ denotes the set of sequences $\alpha\in \N^n$ with $|\alpha|:=\alpha_1+\ldots+\alpha_n=d$. We let $\MFS_d$ denote the set of permutations of the $d$-element set $[d]=\{1,\ldots,d\}$. We let $e_1,\ldots,e_n$ denote the standard unit vectors in $\C^n$, where all entries of $e_i$ are 0 except 1 at the $i$th coordinate. For a vector $u\in \C^n$,  $\|u \|_2=\sqrt {u^*u}$ denote its Euclidean norm. The unit sphere in $\R^n$ is  denoted $\oS^{n-1}=\{x\in\R^n: \|x\|_2=\sum_{i=1}^nx_i^2=1\}$.  We let $ \conv(S) $ denote   the convex  hull of a set $ S $ and  $ \cone(S) $ is its conic hull.  

For $\K=\R$ or $\C$, we use the trace inner product on the space $\K^{n\times n}$ of matrices:  $\langle A,B\rangle= \Tr(A^*B)$ for $A,B\in \K^{n\times n}$. Here, $A^*= \bar A^T$ denotes the complex conjugate transpose of $A$; $A$ is Hermitian if $A^*=A$. For a Hermitian matrix $A$, $\lambda_{\max}(A)$ and $\lambda_{\min}(A)$ denote, respectively, its largest and smallest eigenvalues.  The matrix $A$ is positive semidefinite, written as $A\succeq 0$, if $\lambda_{\min}(A)\ge 0$. For Hermitian matrices $ A, B $, we write $ A\preceq B $ if $ B-A $ is positive semidefinite. \tcolblue{Throughout, $I_n$ denotes the $n\times n$ identity matrix.}

Let $A$ be an $n\times n$ Hermitian matrix with eigenvalues $ \lambda_1,\ldots,\lambda_n $. Its Schatten-$ p $ norm is $ \|A\|_{p} = \sqrt[p]{\sum_{i = 1}^{n} |\lambda_i|^{p}} $. Hence, $\|A\|_1=\Tr(A)$ if $A\succeq 0$. The dual norm of $ \|A\|_{1} $ is the Schatten $\infty$-norm $\|A\|_\infty$, which is the largest absolute value of an eigenvalue, i.e., $\|A\|_\infty=\max\{\lambda_{\max}(A), -\lambda_{\min}(A)\}$. Note also that the Schatten 2-norm coincides with the Frobenius norm, so $\|A\|_2 = \sqrt{\langle A,A\rangle}$. We record the following relation for later use: if $A$ has $r$ nonzero eigenvalues, then
\begin{align}\label{eqnorm12}
\|A\|_1\le \sqrt r \|A\|_2\le \sqrt r\|A\|_1.
\end{align}

\MoL{In what follows we group definitions and preliminary facts needed for this paper about tensors, (homogeneous) polynomials and linear functionals on polynomials, Bose symmetric and maximally symmetric matrices, quantum de Finetti type results, and harmonic polynomials.  These are somewhat scattered  notions, so we give pointers to the literature throughout the text. In addition, for general background, we refer, e.g., to  the survey about tensors by Kolda and Bader \cite{Kolda-Bader-2009} and to the monographs by Lasserre  \cite{Lasserre-2015} or Nie \cite{Nie-2023} about polynomials and moment theory, by Nielsen and Chuang \cite{Nielsen-Chuang} or Watrous \cite{Watrous_2018} about quantum information theory, and  by M\"uller \cite{Mueller-1966} about harmonic polynomials.}

\subsection{Symmetric tensors,  doubly symmetric matrices and  maximally symmetric matrices}\label{secsymmetry}

Let $\K=\R$ or $\C$ be the real or complex field. For integers $r,n\ge 1$, $(\K^n)^{\ot r}$ denotes the space of $r$-tensors on $\K^n$, with elements $v=(v_{\ui})_{\ui \in [n]^r}$. Here, we use the notation $\ui=(i_1,\ldots,i_r)\in [n]^r$ (or simply $i_1\ldots i_r$) for an $r$-sequence in $[n]^r$. Given two integers $1\le d\le r$, it may also be convenient to denote  $r$-sequences as
the concatenation $\uj \uk $ of a $d$-sequence $\uj\in [n]^d$ and an $(r-d)$-sequence $\uk\in [n]^{r-d}$.
\tcolblue{For  vectors $u_i\in \K^{n_i}$, $i=1,2$, their tensor product is the vector $u_1\ot u_2\in\K^{n_1}\ot \K^{n_2}$ with entries
$(u_1\ot u_2)_{i_1i_2}=(u_ 1)_{i_1}(u_2)_{i_2}$ for $(i_1,i_2)\in[n_1]\times [n_2]$. For $u\in\K^n$, $u^{\ot r} = u\ot \cdots \ot u$ (taking the tensor product of $r$ copies).}

 We now recall several notions of symmetry for tensors. \tcolblue{In Appendix \ref{appendix-example} we illustrate some   of these notions on examples.}
 
Permutations $\sigma\in \MFS_r$ act on $[n]^r$ and thus on $r$-tensors, in the following way.
For $\ui\in [n]^r$, set $\ui^\sigma=(i_{\sigma(1)},\ldots, i_{\sigma(r)})$ and, for  $v\in (\K^n)^{\ot r}$, define $v^\sigma$ as the $r$-tensor with entries $(v^\sigma)_{\ui}= v_{\ui^\sigma}$ for $\ui\in [n]^r$.
Then, $v$ is said to be a {\em symmetric tensor} if $v^\sigma=v$ for all $\sigma\in \MFS_r$, and $S^r(\K^n)$ denotes the space of symmetric $r$-tensors on $\K^n$. We let $\Pi_r$ denote the projection from the space $(\K^n)^{\ot r}$ to the symmetric subspace $S^r(\K^n)$. Concretely, we have 
\begin{align}\label{eqPir}
\Pi_r=\frac{1}{r!}\sum_{\sigma\in \MFS_r} P_\sigma,
\end{align}
where $P_\sigma$ is the square matrix indexed by $[n]^r\times [n]^r$, with entries 
$(P_\sigma)_{\ui, \uj}=1$ if $\uj=\ui^\sigma$ and 0 otherwise, for $\ui,\uj\in [n]^r$.

Consider a  matrix $M$ indexed by the set $[n]^r\times [n]^r$, so that $M$ represents an endomorphism from $(\K^n)^{\ot r}$ to $(\K^n)^{\ot r}$. Then, any permutation $\sigma\in \MFS_r$  induces naturally a permutation of  the columns (and rows) of $M$. 

\begin{definition}[Doubly symmetric matrix]\label{defdoublesymmetric}
A matrix $M\in \K^{[n]^r\times [n]^r}$  is called  {\em $r$-column symmetric} (resp., {\em $r$-row symmetric}) if $M\Pi_r =M$ (resp., $\Pi_rM=M$). If $M$ is complex Hermitian or real symmetric, then $M$ is $r$-column symmetric if and only if it is $r$-row symmetric, or equivalently if $\Pi_rM\Pi_r=M$, in which case  $M$ is called {\em $ r $-doubly symmetric} (or just {\em doubly symmetric} if $r$ is clear from the context).
\end{definition}

Hermitian doubly symmetric matrices are also known as {\em Bose symmetric} matrices in the quantum information literature. \MoL{The following stronger symmetry property is sometimes needed (it is considered, e.g.,   in \cite{Doherty_Wehner_2013}).}

\begin{definition}[Maximally symmetric matrix]\label{defmaximallysymmetric}
A  matrix $M\in \K^{[n]^r\times [n]^r}$ can be   viewed as  a $2r$-tensor $\vvec(M)\in (\K^n)^{\ot 2r}$, with entries  $\vvec(M)_{\ui\uj} = M_{\ui,\uj}$ for $\ui,\uj\in [n]^r$.
Then, $M$ is said to be {\em maximally symmetric} when $\vvec(M)$ is a symmetric $2r$-tensor.
\end{definition}

\tcolred{Note that a Hermitian matrix that is maximally symmetric is in fact real-valued (see also Lemma \ref{lem:ppt-sym-is-real-rank-2} below).} Let $\PiMS$ denote the projection from the space $\K^{[n]^r\times [n]^r}$ onto $\MaxSym_r(\K^n)$, the  space of maximally symmetric matrices. Concretely, the projection $\PiMS(M)$ is obtained by first constructing the symmetric $2r$-tensor $\Pi_{2r}(\vvec(M))$ and then reshaping it as a square matrix indexed by $[n]^r$ (by splitting the $2r$ indices into two classes of size $r$, which can be done   arbitrarily  as the tensor is $2r$-symmetric).

\begin{example}\label{ex:IdMS}
As an illustration, for $n=r=2$, we have
\begin{align}\label{eqmaxsymI2}
\PiMS(I_2^{\ot 2})=\left(\begin{matrix} 
1 & 0 & 0 & 1/3 \cr
0 & 1/3 & 1/3 & 0\cr
0 & 1/3 & 1/3 & 0 \cr
1/3 & 0 & 0 & 1
\end{matrix}\right),
\end{align}
ordering the index set $[2]^2$ as $11, 12, 21, 22$ (corresponding to 
the standard basis elements ordered as $e_1\ot e_1, e_1\ot e_2, e_2\ot e_1, e_2\ot e_2$).
\tcolblue{See Appendix \ref{appendix-example} for details.}
\end{example}

\begin{example}\label{ex:a-b-expansion}
	Given two vectors $ a, b\in \R^n $, one can check that
	\begin{align*}
		&\PiMS((aa^{T} + bb^{T})^{\otimes 2}) = aa^{T}\ot aa^{T} + bb^{T}\ot bb^{T} +  \\
		&\frac{1}{3}(aa^{T} \ot bb^{T} + ab^{T}\ot ab^{T} + ab^{T}\ot ba^{T} + ba^{T}\ot ab^{T} + ba^{T}\ot ba^{T} + bb^{T}\ot aa^{T})\nonumber
	\end{align*}
\end{example}

We now examine what happens if we project an element of the form $ (uu^{\ast})^{\otimes d} $ to the maximally symmetric subspace. 
The next result is shown in \cite{Johnston_Lovitz_Vijayaraghavan_2023}, but we give a proof in order to provide some insight.
\begin{lemma}\label{lem:ppt-sym-is-real-rank-2}\cite{Johnston_Lovitz_Vijayaraghavan_2023}
	Let $ u\in \C^n $. Write $ u = a + \bfi b $, with $ a,b\in \R^n $. Then, 
	\begin{align*}
		\PiMS((uu^{\ast})^{\otimes d}) = \PiMS ((a a^{T} + b b^{T})^{\otimes d}). 
	\end{align*}
	In particular, the maximal symmetrization of $ (uu^{\ast})^{\otimes d} $ is real valued for complex $ u $. 
\end{lemma}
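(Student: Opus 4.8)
The plan is to split $uu^*$ into a symmetric and an antisymmetric part and to observe that, after maximal symmetrization, only the ``purely symmetric'' contribution survives. Writing $u = a + \bfi b$ one has $uu^* = S + \bfi A$, with $S := aa^T + bb^T$ real symmetric and $A := ba^T - ab^T$ real and antisymmetric ($A^T = -A$). Expanding the tensor power gives
\begin{align*}
(uu^*)^{\otimes d} = \sum_{T\subseteq[d]} \bfi^{|T|}\, M_T, \qquad M_T := M_1\otimes\cdots\otimes M_d \ \text{ with } M_k = A \text{ for } k\in T,\ M_k = S \text{ for } k\notin T,
\end{align*}
and $M_\emptyset = S^{\otimes d} = (aa^T+bb^T)^{\otimes d}$. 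So it suffices to show $\PiMS(M_T) = 0$ for every nonempty $T\subseteq[d]$.

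For the key step I would use the antisymmetry of $A$. Pick $k\in T$ and let $\tau\in\MFS_{2d}$ be the transposition exchanging positions $k$ and $k+d$. Since $\vvec(M_T)_{\ui\uj} = \prod_\ell (M_\ell)_{i_\ell j_\ell}$, applying $\tau$ — which, under the convention $v^\sigma_{\ui}=v_{\ui^\sigma}$, amounts to transposing the $k$th matrix factor — yields $P_\tau\vvec(M_T) = -\vvec(M_T)$, because $A^T=-A$. As $\Pi_{2d}P_\tau=\Pi_{2d}$ (composing the averaging sum in \eqref{eqPir} with a fixed permutation merely reindexes it), we get $\Pi_{2d}\vvec(M_T) = -\Pi_{2d}\vvec(M_T)$, hence $\Pi_{2d}\vvec(M_T)=0$, i.e. $\PiMS(M_T)=0$. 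This gives the displayed identity, and the ``in particular'' claim follows at once since $(aa^T+bb^T)^{\otimes d}$ is real and $\PiMS$ is a real averaging operator.

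The one delicate point is the index bookkeeping in the key step: one must track carefully how a permutation of the $2d$ tensor slots — in particular one mixing the two index blocks $\ui$ and $\uj$ — acts on $\vvec(M_T)$, given the convention for $v^\sigma$ and the splitting of the $2d$ indices into two halves. To sidestep this there is an equally short alternative: the rank-one tensors $x^{\otimes 2d}$ ($x\in\oR^n$) span $S^{2d}(\oR^n)$, hence $S^{2d}(\oC^n)$ over $\oC$, so two Hermitian matrices have the same maximal symmetrization whenever $\langle M,(xx^T)^{\otimes d}\rangle$ agrees for all real $x$; and a direct trace computation (using that $uu^*$ and $aa^T+bb^T$ are Hermitian) gives $\langle (uu^*)^{\otimes d},(xx^T)^{\otimes d}\rangle = \big((a^Tx)^2+(b^Tx)^2\big)^d = \langle (aa^T+bb^T)^{\otimes d},(xx^T)^{\otimes d}\rangle$ for all $x\in\oR^n$, forcing the two maximal symmetrizations to coincide. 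Either route can be completed in a few lines; I would probably present the first, since it also explains the $\tfrac13$ coefficients seen in Example~\ref{ex:a-b-expansion}.
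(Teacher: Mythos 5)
Your primary argument is correct and genuinely reorganizes the paper's proof, though both rest on the same fundamental fact: the projection $\PiMS$ is invariant under the $2d$-slot transposition exchanging positions $k$ and $k+d$ (which is exactly the partial transpose on register $k$ of the underlying matrix). The paper exploits this by iterated averaging: it observes $uu^* + (uu^*)^T = 2(aa^T+bb^T)$, averages $(uu^*)^{\otimes d}$ with its partial transpose on register $1$ to produce $(aa^T+bb^T)\otimes (uu^*)^{\otimes(d-1)}$, and repeats over all $d$ registers; the invariance $\PiMS(M)=\PiMS(\tilde M)$ then finishes it. You instead split $uu^* = S + \bfi A$ with $S$ symmetric and $A$ antisymmetric, expand $(S+\bfi A)^{\otimes d}$ into $2^d$ summands $M_T$, and show each $M_T$ with $T\neq\emptyset$ is annihilated by $\PiMS$ because the transposition at a slot $k\in T$ sends $\vvec(M_T)$ to its negative while $\Pi_{2d}P_\tau = \Pi_{2d}$. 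The bookkeeping you flag as delicate in fact checks out: with $v^\sigma_{\ui}=v_{\ui^\sigma}$ and $\tau=(k,\,k+d)$, one gets $(\vvec(M_T)^\tau)_{\ui\uj}=\prod_{\ell\neq k}(M_\ell)_{i_\ell j_\ell}\cdot(M_k^T)_{i_kj_k}=-\vvec(M_T)_{\ui\uj}$. What your expansion buys, as you note, is an explanation of the individual rational coefficients in \Cref{ex:a-b-expansion} as coming from the purely-symmetric term plus how many slot permutations fix each mixed monomial; what the paper's iterated-average formulation buys is brevity, since it never names the antisymmetric part $A$ and avoids the $2^d$-term expansion. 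Your evaluation-polynomial alternative is also sound: since $\langle M, (xx^T)^{\otimes d}\rangle$ for $x\in\R^n$ determines the maximally symmetric part of $M$, the identity $\langle uu^*, xx^T\rangle = |u^*x|^2 = (a^Tx)^2+(b^Tx)^2 = \langle aa^T+bb^T, xx^T\rangle$ closes the argument; that route is the most elementary but gives no insight into the coefficients.
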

\noindent The expansion of $ \PiMS ((a a^{T} + b b^{T})^{\otimes d}) $   for $ d = 2 $ is given in  \Cref{ex:a-b-expansion}. 
As a tool to show Lemma \ref{lem:ppt-sym-is-real-rank-2}, we use the notion of {\em partial transpose}.
For a matrix $M$ indexed by $[n]^d\times [n]^d$, taking the partial transpose w.r.t. the first register yields the matrix $\tilde M$ with entries $\tilde M_{i_1i_2\ldots i_d, j_1j_2\ldots j_d}=M_{j_1i_2\ldots i_d, i_1j_2\ldots j_d}$ for $i_1,j_1,\ldots, i_d,j_d\in [n]$. This corresponds to applying to $\vvec(M)$ the permutation $\sigma \in \mathfrak{S}_{2d} $ that flips $1$ and $d+1$. 
Taking the partial transpose w.r.t. to any subset of the $d$ registers is defined analogously. Note $\PiMS(M)=\PiMS(\tilde M)$ for any partial transpose $\tilde M$ of $M$.

\begin{proof} [Proof of Lemma \ref{lem:ppt-sym-is-real-rank-2}]
Note that $ uu^*+ (uu^*)^T= uu^* + \bar{u} \bar{u}^*= 2aa^T + 2bb^T$. Hence, taking the average of 	$ (uu^{\ast})^{\otimes d} $ and its first partial transpose gives the matrix
\begin{align*}
		\frac{1}{2}(uu^{\ast})^{\otimes d} + \frac{1}{2}(\bar{u}\bar{u}^{\ast})\otimes (uu^{\ast})^{\otimes d-1} 
		& = 
		\frac{1}{2}(uu^*+ \bar{u}\bar {u}^*) \ot (uu^*)^{\ot (d-1)}\\
		&=
		(aa^{T} + bb^{T}) \otimes (uu^{\ast})^{\otimes (d-1)}. 
\end{align*}
Taking iteratively the average of the partial transposes at all other registers  yields the matrix $ (aa^T + bb^T)^{\ot d}$.
Hence, $ \PiMS((uu^{\ast})^{\otimes d}) = \PiMS ((a a^{T} + b b^{T})^{\otimes d}) $ by the above observation.
\end{proof}

We will also use the notion of {\em partial trace}.
\begin{definition}[Partial trace]
Consider a  matrix $M$ indexed by $[n]^r\times [n]^r$ and an integer $1\le d\le r$.
Then, the  {\em partial trace} $\Tr_{r-d}(M)$ is the  matrix indexed by $[n]^d\times [n]^d$ obtained by `tracing out the last $r-d$ registers' of $(\K^n)^{\ot r}$, i.e., with entries 
$(\Tr_{r-d}(M))_{\ui,\uj}=\sum_{\uk\in [n]^{r-d}} M_{\ui\uk, \uj\uk}$ for $\ui,\uj\in [n]^d$. In other words,
\begin{align*}
\langle M, A\otimes I_n^{\ot (r-d)}\rangle =\langle \Tr_{r-d}(M), A\rangle\ \text{ for any } A\in \K^{[n]^d\times [n]^d}.
\end{align*}
\end{definition}

\subsection{Polynomials and linear functionals on polynomials}

Throughout, let $ R := \R[x] = \R[x_1,\ldots,x_n] $ denote  the polynomial ring in variables $ x=(x_1,\ldots,x_n) $. For an integer $d\in\N$,  $ R_d $ denotes its $ d $-th graded component, which consists of the homogeneous polynomials (aka forms) with degree $d$, and $R_{\le d}=R_0\oplus R_1\oplus\ldots\oplus R_d$ consists of the polynomials with degree at most $d$. 
We let $[x]_d= (x^\alpha)_{\alpha\in \N^n_d}$ denote the vector of all monomials with degree $d$.
Throughout we use the notation  $$s=x_1^2+\ldots +x_n^2.$$

\begin{definition}[Representing matrix of a polynomial]\label{defGramindex}
A symmetric matrix $G$ indexed by $\N^n_d$ is said to be a {\em representing matrix} of a polynomial $f\in R_{2d}$ if the polynomial identity $f=[x]_d^T G [x]_d$ holds. 
\end{definition}

A polynomial $f\in R$ is a {\em sum of squares} (abbreviated as sos) if $ f = p_1^2  + \ldots + p_N^2 $ for some $ N\in \N $ and $ p_1,\ldots,p_N\in R$. Then, all $p_i$ lie in $R_d$ (resp., $R_{\le d}$) if $f\in R_{2d}$ (resp., $f\in R_{\le 2d}$). The following characterization is well-known \tcolblue{(see \cite{Choi-Lam-Reznick-1995,Powers-Wormann-1998})}.

\begin{lemma}\label{lem:sos-equiv-gram-matrix}
A polynomial $f$ is a sum of squares if and only if it admits a positive semidefinite representing matrix (then, often called a {\em Gram matrix} of $f$).
\end{lemma}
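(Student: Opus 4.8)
The plan is to prove the two implications separately, in each case translating between a sum-of-squares decomposition of $f$ and a positive semidefinite representing matrix by passing through coefficient vectors in $\R^{\N^n_d}$.

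For the direction ``sos $\Rightarrow$ psd representing matrix'', I would start from a decomposition $f=\sum_{i=1}^N p_i^2$. Since $f\in R_{2d}$, each $p_i$ lies in $R_d$ (as recorded right after the definition of sos), so I can write $p_i=v_i^T[x]_d$ for a coefficient vector $v_i\in\R^{\N^n_d}$. Expanding gives $f=\sum_{i=1}^N (v_i^T[x]_d)^2=[x]_d^T\big(\sum_{i=1}^N v_iv_i^T\big)[x]_d$, so $G:=\sum_{i=1}^N v_iv_i^T$ is a symmetric positive semidefinite matrix indexed by $\N^n_d$ that represents $f$.

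For the converse, I would take a positive semidefinite representing matrix $G$, so that $f=[x]_d^TG[x]_d$, and use a spectral (or Cholesky) decomposition $G=\sum_j \lambda_j w_jw_j^T$ with $\lambda_j\ge 0$ and $w_j\in\R^{\N^n_d}$. Substituting yields $f=\sum_j\lambda_j(w_j^T[x]_d)^2=\sum_j\big(\sqrt{\lambda_j}\,w_j^T[x]_d\big)^2$, an explicit sos decomposition of $f$ into squares of polynomials in $R_d$.

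I do not expect any real obstacle here: the only point requiring a little care is that in a decomposition $f=\sum_i p_i^2$ of a form $f\in R_{2d}$ the summands $p_i$ are automatically homogeneous of degree $d$ (otherwise the lowest- and highest-degree homogeneous parts of $\sum_i p_i^2$ would be nonzero sums of squares of the corresponding homogeneous components of the $p_i$, forcing those degrees to equal $2d$); this is precisely the fact stated after the definition of sos, so it can simply be invoked. Note also that representing matrices are not unique — distinct positive semidefinite matrices may represent the same form, owing to the linear relations among products of degree-$d$ monomials — but the statement only asserts the existence of one, so this causes no difficulty.
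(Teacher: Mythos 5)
Your proof is correct and is the standard Gram-matrix argument. The paper itself does not prove this lemma—it simply records it as well-known—so there is no proof in the paper to compare against; your write-up supplies exactly the argument one would expect a reader to have in mind (coefficient vectors $v_i$ give $G=\sum_i v_iv_i^T\succeq 0$ in one direction, and a spectral/Cholesky factorization of a psd representing matrix yields the squares in the other).
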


Let $\Sigma$ denote the cone of sos polynomials and set $\Sigma_d=\Sigma\cap R_{2d}$. 
Let $\MP$ denote the cone of globally nonnegative polynomials and set $\MP_d=\MP\cap R_{2d}$. 
So, $\Sigma_d\subseteq \MP_d$ are convex cones in $R_{2d}$.  

Let $\MP(\oS^{n-1})$ denote the cone of polynomials that are nonnegative on the sphere $\oS^{n-1}$ and set $\MP_d(\oS^{n-1}) =\MP(\oS^{n-1})\cap R_{2d}$. \tcolblue{Note that $\MP_d(\oS^{n-1})=\MP_d$ (since nonnegativity on the sphere implies global nonnegativity for a homogeneous polynomial).}

\medskip
The dual cones $\Sigma^*_d$ and $\MP^*_d$ live in the dual vector space $R^*_{2d}$, which consists of the linear functionals $L:R_{2d}\to \R$. 
The following notion of moment matrix is useful to characterize  membership in the dual   cones. 

\begin{definition}[Moment matrix]\label{defmomentindex}
The moment matrix of $L\in R_{2d}^*$ is the matrix $M_d(L)=L([x]_d[x]_d^T)$, where $L$ is applied entry-wise.
\end{definition}

\tcolblue{We refer to Lemma \ref{lemsos} and Proposition \ref{propsosP} below for a characterization of the dual cones 
$\Sigma_d^*$ and $\MP_d^*$  in terms of properties of moment matrices.}

\subsection{Homogeneous polynomials and symmetric tensors}

Let $ f\in R_{d} $ be  homogeneous  of degree $ d $ in $ n $ variables $ x_1,\ldots,x_n $. It is most common to write 
\begin{align}\label{multi-index} 
f = \sum_{\alpha\in \N_{d}^n} f_{\alpha} x^{\alpha} =\langle (f_\alpha)_{\alpha\in \N^n_d}, [x]_d\rangle
\end{align} 
in the (usual) monomial basis, for some scalars $f_\alpha$ ($\alpha\in \N^n_d$);
we call this \emph{multiplicity indexing} or \emph{multi-indexing} of $f$. 
\tcolblue{For a sequence $\ui=(i_1,\ldots,i_d)\in [n]^d$, set $x^{\ui}=x_{i_1}\cdots x_{i_d}$. Then, }
one can alternatively write 
\begin{align}\label{tensor-index}
f =\sum_{\ui\in [n]^d} v_{\ui}x^{\ui}=\langle v, x^{\ot d}\rangle,
\end{align}
for some (uniquely determined) symmetric $d$-tensor  $v\in S^d(\R^n)$; we call this {\em tensor-indexing} of $f$. To see this, for $\ui=(i_1,\ldots,i_d)\in[n]^d$, define $\alpha(\ui)\in \N^n_d$ with $\alpha(\ui)_k$ being the number of occurrences of the symbol $k\in[n]$ in the multiset $\{i_1,\ldots,i_d\}$. Then, $x^{\ui} = x^{\alpha(\ui)}$ and $v_{\ui}=v_{\uj}$ if $\alpha(\ui)=\alpha(\uj)$ (since $v$ is a symmetric tensor). \tcolblue{For a given $\beta\in \N^n$,} the number of sequences $\ui\in [n]^d$ with $\alpha(\ui)=\tcolblue \beta$  equals the multinomial coefficient ${d\choose \tcolblue \beta}={d!\over \tcolblue{\beta_1!\cdots \beta_n!}}$. Comparing (\ref{multi-index}) and (\ref{tensor-index}) gives  $f_{\tcolblue{\beta}}={d\choose \tcolblue \beta} v_{\ui}$ if $\alpha(\ui)=\tcolblue \beta$.

As a consequence, moment matrices and representing  matrices of polynomials may also be defined in tensor-indexing, as analogs of Definitions \ref{defGramindex} and \ref{defmomentindex}.

\begin{definition} \label{defGramtensor}
	Let $f\in R_{2d}$. A symmetric matrix $G$ indexed by $[n]^d$ is called a {\em tensor-indexed representing matrix} of  $ f $ if the following polynomial identity holds:
	\begin{align*}
		f = (x^{\otimes d})^{T}G x^{\otimes d} = \langle G, (x x^{T})^{\otimes d} \rangle. 
	\end{align*}
\end{definition}
	
	Then, any polynomial  $f\in R_{2d}$ admits  a (unique) tensor-indexed representing matrix  that is maximally symmetric, denoted by $\MaxSym(f)$. Namely, we have $\MaxSym(f)=\PiMS(G)$, where $G$ is any tensor-indexed representing matrix of $f$.
Moreover,
in analogy with \Cref{lem:sos-equiv-gram-matrix}, a polynomial $ f $ is a sum of squares if and only if it has a tensor-indexed positive semidefinite representing matrix.

\begin{definition}\label{defmomenttensor}
	Let $ L \in R_{2d}^{*}$. Then, the {\em tensor-indexed  moment matrix} of $L$ is  
	\begin{align*}
		L(x^{\ot d}(x^{\ot d})^T)= (L(x^{\ui} x^{\uj}))_{\ui,\uj\in [n]^d}.
	\end{align*}
\end{definition}

Note that the tensor-indexed moment matrix $L(x^{\ot d}(x^{\ot d})^T)$ of $L\in R_{2d}^*$ arises from its multi-indexed moment matrix $M_d(L)=L([x]_d([x]_d)^T)$ by repetitions of its rows/columns (with two rows/columns indexed by $\ui,\uj\in [n]^d$ being equal when $\alpha(\ui)=\alpha(\uj)$).
It is easy but important to realize that maximally symmetric matrices and tensor-indexed moment matrices  are in fact equivalent notions (see \cite{Doherty_Wehner_2013}).

\begin{lemma}\label{lem:moment-matrix-equiv-maxsym}
	Let $M$ be a real matrix indexed by $[n]^d$. 
	The following are equivalent: 
	\begin{enumerate}
		\item[(i)] $ M $ is the tensor-indexed moment matrix of some linear functional $L\in R_{2d}^{\tcolblue\ast}$. 
		\item[(ii)] $ M $ is maximally symmetric. 
	\end{enumerate}
\end{lemma}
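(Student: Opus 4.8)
The plan is to prove the two implications directly, exploiting the explicit correspondence between entries of the tensor-indexed moment matrix and monomial evaluations of $L$. Recall that the tensor-indexed moment matrix $L(x^{\ot d}(x^{\ot d})^T)$ has $(\ui,\uj)$-entry $L(x^{\ui}x^{\uj})=L(x^{\alpha(\ui)+\alpha(\uj)})$, so it depends only on the values $L(x^\alpha)$ for $\alpha\in\N^n_{2d}$; conversely, every such monomial is of the form $x^{\ui}x^{\uj}$ with $\ui,\uj\in[n]^d$, since any multiset of size $2d$ splits into two multisets of size $d$. The structural fact used in both directions is elementary: two sequences $\uk,\uk'\in[n]^{2d}$ satisfy $\alpha(\uk)=\alpha(\uk')$ precisely when $\uk'$ is obtained from $\uk$ by some permutation in $\MFS_{2d}$.

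For (i)$\Rightarrow$(ii), assume $M=L(x^{\ot d}(x^{\ot d})^T)$. For any $\uk=\ui\uj\in[n]^{2d}$ we have $\vvec(M)_{\uk}=M_{\ui,\uj}=L(x_{i_1}\cdots x_{i_d}x_{j_1}\cdots x_{j_d})$, and this monomial is unchanged when the $2d$ factors are reordered, by commutativity of $R$. Hence $\vvec(M)^\sigma=\vvec(M)$ for every $\sigma\in\MFS_{2d}$, i.e. $\vvec(M)$ is a symmetric $2d$-tensor, so $M$ is maximally symmetric. For (ii)$\Rightarrow$(i), assume $\vvec(M)$ is a symmetric $2d$-tensor and define a linear functional $L\in R_{2d}^*$ by setting, for each $\alpha\in\N^n_{2d}$, $L(x^\alpha):=\vvec(M)_{\uk}$ for any $\uk\in[n]^{2d}$ with $\alpha(\uk)=\alpha$, and extending linearly. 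This is well-defined: by the structural fact above all admissible $\uk$ form a single $\MFS_{2d}$-orbit, on which $\vvec(M)$ is constant by maximal symmetry. Then the tensor-indexed moment matrix of $L$ has $(\ui,\uj)$-entry $L(x^{\alpha(\ui)+\alpha(\uj)})=\vvec(M)_{\ui\uj}=M_{\ui,\uj}$, so it equals $M$; note that $M$ is automatically symmetric since $\vvec(M)_{\ui\uj}=\vvec(M)_{\uj\ui}$, consistent with being a moment matrix.

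The only point requiring care—though it is a routine verification—is the well-definedness of $L$ in the implication (ii)$\Rightarrow$(i): it is exactly here that maximal symmetry (rather than mere double symmetry) is essential, since the value $L(x^\alpha)$ must not depend on how the $2d$ exponents recorded by $\alpha$ are grouped into the two blocks of size $d$ indexing the rows and columns of $M$. Everything else is bookkeeping with the dictionary between multi-indexing and tensor-indexing established earlier in this section.
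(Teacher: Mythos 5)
Your proof is correct and follows essentially the same route as the paper's: the forward implication is the observation that monomial entries of the moment matrix are invariant under reordering the $2d$ factors (equivalently, that $x^{\ot d}(x^{\ot d})^T$ is maximally symmetric), and the converse defines $L(x^\gamma):=M_{\ui,\uj}$ for any splitting $\uk=\ui\uj$ with $\alpha(\uk)=\gamma$, with well-definedness guaranteed precisely by maximal symmetry. You simply spell out the orbit argument for well-definedness more explicitly than the paper does.
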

\begin{proof}
(i) $\implies$ (ii) follows since $x^{\ot d}(x^{\ot d})^T$ is maximally symmetric, \tcolblue{which implies  $M=L(x^{\ot d}(x^{\ot d})^T)$ is  maximally symmetric.}
Conversely, assume $M$ is maximally symmetric. Then, one can define $L\in R_{2d}^*$ as follows. For $\gamma\in \N^n_{2d}$, pick $\uk\in [n]^{2d}$ such that $\alpha(\uk)=\gamma$, i.e., $x^\gamma=x^{\uk}$. Then, write $\uk=\ui\uj$ as the concatenation of two sequences $\ui,\uj\in [n]^d$ and set $L(x^\gamma)=M_{\ui,\uj}$. Note that this   definition  does not depend on the choice of $\uk$  since $M$ is maximally symmetric. By construction, $M$ is the tensor-indexed moment matrix of $L$, as desired.
\end{proof}

\tcolblue{We now characterize the dual cones $\Sigma_d^*$ and $\MP_d^*=\MP_d(\mathbb S^{n-1})^*$, both in terms of the (usual) moment matrices $M_d(L)$ and their tensor analogs $L(x^{\ot d}(x^{\ot d})^T)$.
The following is  well-known and easy to see. }

\begin{lemma}\label{lemsos}
For $L\in R_{2d}^*$, $L\in \Sigma_d^{\ast} \Longleftrightarrow M_d(L)\succeq 0 \Longleftrightarrow L(x^{\ot d} (x^{\ot d})^{T}) \succeq 0.$
\end{lemma}

\tcolblue{Given $L\in R_{2d}^*$, recall that a (Borel positive) measure $\mu$ is called a {\em representing measure} of $L$ if $L(x^\alpha)=\int x^\alpha d\mu(x)$ holds for all $\alpha\in\oN^n_{2d}$. Clearly, $L\in \MP_{d}^*$ if it has a representing measure.
Classical results of Haviland \cite{Haviland1936} and Tchakaloff \cite{Tchakaloff1957} show  that  linear functionals that are nonnegative on nonnegative polynomials of a given degree 
have a  representing measure that can be chosen to be finite atomic.
 We need the following  variant of these results for linear functionals on \emph{homogeneous} polynomials, which may be found, e.g., in \cite[Lemma 4.2]{Riener_Schweighofer_2018}. }

\begin{proposition}\label{propsosP}
	Let $ L\in R_{2d}^{\ast}$. The following assertions are equivalent.
	\begin{itemize}
	\item[(i)] $L\in \mathcal{P}_d^{\ast}$ (equivalently, $L\in \MP(\mathbb S^{n-1})^*$).
	\item[(ii)] $M_d(L)\in \cone\{[u]_d[u]_d^T: u\in \R^n, \|u\|=1\}.$
	\item[(iii)] 
		$L(x^{\ot d} (x^{\ot d})^{T}) \in \cone \{u^{\ot d} (u^{\ot d})^{T} : u\in \R^n \}.$
	\end{itemize}
\end{proposition}

Throughout, we will use the norm $\|\cdot \|_\infty$ on the polynomial space $R_{d}$, its dual norm $\|\cdot\|_1$ on the  dual space $R^*_d$, and (following  \cite{Doherty_Wehner_2013}) the associated norm $\|\cdot\|_{F1}$ for maximally symmetric matrices, defined as follows.

\begin{definition}\label{defnorm}
For $f\in R_d$, $\|f\|_\infty=\max\{|f(x)|: x\in \oS^{n-1}\}$ and, for $L\in R_d^*$,  $\|L\|_1=\max\{L(f): f\in R_d, \|f\|_\infty=1\}$. For a maximally symmetric matrix $M$ indexed by $[n]^d$,  we set $\|M\|_{F1}=\|L\|_1$, where $L\in R_{2d}^*$ is the linear functional whose moment matrix is $M$, i.e., such that $L(x^{\ot d}(x^{\ot d})^T)=M$.
\end{definition}

\begin{lemma}\label{lemnorm}
	If $f\in R_{2d}$ and  $G$ is a (tensor-indexed) matrix representing $f$, then 
	\begin{align}\label{eqnormfG}
		\|f\|_\infty \le \|G\|_\infty.
	\end{align}
	If $M$ is a real maximally symmetric matrix indexed by $[n]^d$, then
	\begin{align}\label{eqnormLM}
		\|M\|_1\le \|M\|_{F1}.
	\end{align}
\tcolblue{Here, $\|G\|_\infty$ and $\|M\|_1$ are the Schatten norms.}
\end{lemma}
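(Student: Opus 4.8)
The plan is to prove the two inequalities separately, deducing \eqref{eqnormLM} from \eqref{eqnormfG}.

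For \eqref{eqnormfG} I would argue pointwise. Fix $x\in\oS^{n-1}$ and set $v=x^{\otimes d}$, so that $\|v\|_2=\|x\|_2^d=1$. By the defining identity of a tensor-indexed representing matrix, $f(x)=(x^{\otimes d})^TG\,x^{\otimes d}=v^TGv$, and since $G$ is real symmetric the Rayleigh-quotient bound gives $|v^TGv|\le\max\{\lambda_{\max}(G),-\lambda_{\min}(G)\}\,\|v\|_2^2=\|G\|_\infty$. Taking the maximum over $x\in\oS^{n-1}$ yields $\|f\|_\infty\le\|G\|_\infty$. The same computation also covers a multi-indexed representing matrix $G$ indexed by $\N^n_d$, since then $f(x)=[x]_d^TG[x]_d$ and $\|[x]_d\|_2^2=\sum_{\alpha\in\N^n_d}x^{2\alpha}\le\sum_{\alpha\in\N^n_d}\binom{d}{\alpha}x^{2\alpha}=\|x\|_2^{2d}=1$ for $x\in\oS^{n-1}$.

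For \eqref{eqnormLM}, let $L\in R_{2d}^*$ be the functional with $L(x^{\otimes d}(x^{\otimes d})^T)=M$, so $\|M\|_{F1}=\|L\|_1$. First I record the bookkeeping identity: for any tensor-indexed representing matrix $G$ of $f\in R_{2d}$, using $(xx^T)^{\otimes d}_{\ui,\uj}=x^{\ui}x^{\uj}$ one gets $f=\sum_{\ui,\uj\in[n]^d}G_{\ui\uj}x^{\ui}x^{\uj}$ and hence $L(f)=\sum_{\ui,\uj}G_{\ui\uj}M_{\ui\uj}=\langle G,M\rangle$. Next, apply Schatten-$1$/Schatten-$\infty$ duality to the real symmetric matrix $M=\sum_i\lambda_iu_iu_i^T$: the matrix $A:=\sum_i\mathrm{sign}(\lambda_i)\,u_iu_i^T$ is symmetric, has eigenvalues in $\{-1,0,1\}$ so $\|A\|_\infty\le1$, and satisfies $\langle A,M\rangle=\sum_i|\lambda_i|=\|M\|_1$. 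Since $A$ is symmetric it is a tensor-indexed representing matrix (in the sense of Definition \ref{defGramtensor}) of the form $g(x):=(x^{\otimes d})^TA\,x^{\otimes d}$; applying \eqref{eqnormfG} to this pair gives $\|g\|_\infty\le\|A\|_\infty\le1$, while the bookkeeping identity gives $L(g)=\langle A,M\rangle=\|M\|_1$. Therefore $\|M\|_{F1}=\|L\|_1\ge L(g)=\|M\|_1$, which is \eqref{eqnormLM}.

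I do not expect a genuine obstacle: the only points needing care are the verification that the duality witness $A$ legitimately qualifies as a representing matrix (it does, being symmetric because $M$ is), and the index bookkeeping behind $L(f)=\langle G,M\rangle$. Conceptually the lemma just says that passing from a matrix to the polynomial it represents is $\|\cdot\|_\infty\!\to\!\|\cdot\|_\infty$ contractive, and that passing from a functional to its moment matrix is $\|\cdot\|_1\!\to\!\|\cdot\|_1$ contractive, the latter being the dual statement obtained by exhibiting the optimal Schatten-$\infty$ witness as the Gram matrix of an admissible test polynomial.
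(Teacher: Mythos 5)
Your proof is correct and follows essentially the same route as the paper's: a Rayleigh-quotient bound on $v=x^{\otimes d}$ for \eqref{eqnormfG}, and then Schatten $1$/$\infty$ duality combined with the identity $L(f)=\langle G,M\rangle$ and \eqref{eqnormfG} for \eqref{eqnormLM}. The only cosmetic difference is that you construct the dual witness $A=\sum_i\mathrm{sign}(\lambda_i)u_iu_i^T$ explicitly, whereas the paper invokes Schatten duality abstractly to obtain a $G$ with $\|G\|_\infty=1$ and $\langle M,G\rangle=\|M\|_1$.
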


\begin{proof}
We first show (\ref{eqnormfG}). By definition,  we have $f(x)=\langle G, (xx^T)^{\ot d}\rangle$. Hence, for any unit vector $x\in\R^n$, $|f(x)|\le \max\{\lambda_{\max}(G),-\lambda_{\min}(G)\}=\|G\|_{\infty}$, showing (\ref{eqnormfG}). Now, we show (\ref{eqnormLM}). By duality of the Schatten 1- and $\infty$-norms, $\|M\|_1=\langle M,G\rangle$, for some matrix   $G$ with $\|G\|_\infty=1$. Let $L\in R_{2d}^*$ be the linear functional with tensor-indexed moment matrix $M$, i.e., 
$M =L((xx^T)^{\ot d})$, and consider the polynomial $f(x)=\langle G, (xx^T)^{\ot d}\rangle$, \tcolblue{with representing matrix $G$. Then, we have
$$\langle M,G\rangle =\langle L((xx^T)^{\ot d}), G\rangle =L( \langle (xx^T)^{\ot d}, G\rangle)= L(f).$$}  Hence, $\|M\|_1= \langle M,G\rangle =L(f) \le \|L\|_1\|f\|_\infty\le \|M\|_{F1} \|G\|_\infty=\|M\|_{F1}$, where in the last inequality we use $\|L\|_1=\|M\|_{F1}$ and  (\ref{eqnormfG}). So, (\ref{eqnormLM}) holds.
\end{proof}

\subsection{Quantum de Finetti theorems}\label{secQdF}

We now have all tools in hand to introduce the Quantum de Finetti type theorems that are relevant to our treatment.

We first present the  Quantum de Finetti theorem, due to Christandl, K\"onig, Mitchison and Renner \cite{Christandl_2007_deFinetti_oneandhalf}; we also refer to the exposition by  Watrous \cite[Theorem~7.26]{Watrous_2018} and  K\"onig and Renner \cite{Koenig_Renner_2005}. We first state it for matrices   $M=vv^*$ with $v\in S^r(\C^n)$, as is customary in the literature. The result easily extends to the case when $M$ is a  positive semidefinite $r$-doubly symmetric Hermitian matrix. 

\begin{theorem}[Christandl et al. \cite{Christandl_2007_deFinetti_oneandhalf}, Quantum de Finetti,  abbreviated as QdF]\label{theoQdF}
Let $v\in S^r(\C^n)$ be a unit vector. Then, for any integer $1\le d\le r$, there exists 
\begin{align}\label{eqtauCd}
\tau\in \mathcal{C}_d(\C^n)=\conv( u^{\ot d} (u^{\ot d})^*: u\in \C^n, \|u\|_2=1)
\end{align}
such that 
\begin{align}\label{eqQdFCv}
\|\Tr_{r-d}(vv^*)-\tau\|_1 \le {4d(n-1)\over r+1}.
\end{align}
\end{theorem}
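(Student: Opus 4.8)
The plan is to prove the statement as a consequence of the standard (``i.i.d.'') quantum de Finetti theorem for states on a symmetric subspace, translating that probabilistic/operator statement into the tensor language used here. First I would recall the setup: a unit vector $v \in S^r(\C^n)$ gives a rank-one density matrix $vv^* \in S^r(\C^n)\otimes S^r(\C^n)^*$ living on the symmetric (Bosonic) subspace of $(\C^n)^{\ot r}$. The object $\Tr_{r-d}(vv^*)$ is then the reduced density operator on the first $d$ registers, which is automatically supported on the symmetric subspace $S^d(\C^n)$ and has unit trace. The claim is that this $d$-party reduced state is, up to $\|\cdot\|_1$-error $4d(n-1)/(r+1)$, a convex combination of product states $u^{\ot d}(u^{\ot d})^*$ with $u$ a unit vector in $\C^n$ — i.e.\ it lies $\epsilon$-close to the separable ``i.i.d.'' cone $\mathcal C_d(\C^n)$.

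The core step is the construction of the approximating $\tau$. Following Christandl et al.\ (and the Watrous exposition), I would use the informationally complete POVM / measure-and-prepare approach: consider the continuous POVM on $S^r(\C^n)$ given by the (suitably normalized) coherent states $\{\binom{n+r-1}{r}\, u^{\ot r}(u^{\ot r})^*\}$ as $u$ ranges over the unit sphere of $\C^n$ with the unitarily-invariant measure, which resolves the identity on the symmetric subspace. Measuring $r-d$ of the registers of $vv^*$ with this POVM (on the remaining $r-d$ symmetric slots) and re-preparing the outcome as $d$ copies defines a probability measure on unit vectors $u$, hence an element $\tau = \int u^{\ot d}(u^{\ot d})^* \, d\nu(u) \in \mathcal C_d(\C^n)$. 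The remaining work is the error bound: one shows $\|\Tr_{r-d}(vv^*) - \tau\|_1 \le 4d(n-1)/(r+1)$ by a direct computation of the overlap between the true reduced state and the measure-and-prepare channel output, using that both are supported on $S^d(\C^n)$ whose dimension is $\binom{n+d-1}{d}$, and estimating the binomial ratios $\binom{n+r-1}{r}/\binom{n+r-1-d}{r-d}$ etc. This is exactly the computation in \cite{Christandl_2007_deFinetti_oneandhalf}; the clean bound $4d(n-1)/(r+1)$ comes from carefully tracking these dimension factors (the $(n-1)$ is the dimension of complex projective space $\P^{n-1}$ and the $1/(r+1)$ is the ``one extra register'' gain).

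The extension to a general positive semidefinite $r$-doubly symmetric Hermitian $M$ (as claimed in the sentence preceding the theorem) is routine: write $M = \sum_k \lambda_k v_k v_k^*$ with $\lambda_k \ge 0$ and $v_k \in S^r(\C^n)$ by the spectral theorem (the eigenvectors of a doubly symmetric psd matrix can be chosen in the symmetric subspace), normalize so $\Tr(M)=1$, apply the theorem to each $v_k$ to get $\tau_k \in \mathcal C_d(\C^n)$ with the stated bound, and set $\tau = \sum_k \lambda_k \tau_k \in \mathcal C_d(\C^n)$; convexity of $\mathcal C_d(\C^n)$ and the triangle inequality for $\|\cdot\|_1$ preserve the bound.

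The main obstacle is the error estimate in the second step: getting the \emph{sharp} constant $4d(n-1)/(r+1)$ rather than a weaker $O(dn/r)$ bound requires the precise coherent-state POVM normalization and a non-trivial but standard computation with symmetric-subspace dimensions and binomial coefficients. Since this is precisely Theorem~7.26 of \cite{Watrous_2018} / the main result of \cite{Christandl_2007_deFinetti_oneandhalf}, I expect the author to simply cite it and only spell out the (easy) reduction from the rank-one case to general psd doubly symmetric $M$, plus the translation of ``reduced density operator'' into the partial-trace notation $\Tr_{r-d}$ fixed earlier in the Preliminaries.
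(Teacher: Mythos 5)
Your proposal is essentially correct and matches what the paper does. The paper does not prove Theorem~\ref{theoQdF} at all; it is stated as a cited result of Christandl et al.\ (with pointers to Watrous's exposition and K\"onig--Renner), exactly as you anticipated in your final paragraph. The only argument the paper supplies in this vicinity is the proof of Corollary~\ref{corQdF}, i.e.\ the reduction from rank-one $vv^*$ to a general psd $r$-doubly symmetric $M$ with $\Tr(M)=1$: spectrally decompose $M=\sum_l\lambda_l v_lv_l^*$ with eigenvectors $v_l\in S^r(\C^n)$, apply Theorem~\ref{theoQdF} to each $v_l$ to obtain $\tau_l\in\mathcal C_d(\C^n)$, and set $\tau=\sum_l\lambda_l\tau_l$, using $\sum_l\lambda_l=1$ together with the triangle inequality for $\|\cdot\|_1$ to propagate the bound. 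Your description of that reduction is the same. Your sketch of the underlying measure-and-prepare / coherent-state POVM argument for the theorem itself is also an accurate summary of the Christandl et al.\ proof, though it is not reproduced in the paper.
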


\begin{corollary}[Quantum de Finetti, for doubly symmetric Hermitian matrices]\label{corQdF}
Let $M$ be a Hermitian matrix indexed by $[n]^r$. Assume $M$ is    doubly symmetric,  positive semidefinite and $\Tr (M)=1$. Then, for any integer $1\le d\le r$, there exists $\tau\in \mathcal{C}_d(\C^n)$ (as defined in (\ref{eqtauCd})) 
such that 
\begin{align}\label{eqQdFCM}
\|\Tr_{r-d}(M)-\tau\|_1 \le {4d(n-1)\over r+1}.
\end{align}
\end{corollary}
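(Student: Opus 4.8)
The strategy is to reduce the statement about a positive semidefinite $r$-doubly symmetric matrix $M$ with $\Tr(M)=1$ to the already-established rank-one case (Theorem \ref{theoQdF}), by purifying $M$ and exploiting the interplay between doubly symmetric matrices and the Bosonic (symmetric) subspace.

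First I would diagonalize: write $M = \sum_j \lambda_j v_j v_j^*$ with $\lambda_j \ge 0$, $\sum_j \lambda_j = 1$, and $v_j$ an orthonormal system of eigenvectors. The crucial structural observation is that since $M$ is $r$-doubly symmetric, i.e. $\Pi_r M \Pi_r = M$, the image of $M$ lies in the symmetric subspace $S^r(\C^n)$; hence each eigenvector $v_j$ belonging to a nonzero eigenvalue is itself a unit vector in $S^r(\C^n)$. (More carefully: $M = \Pi_r M \Pi_r$ forces $\operatorname{im} M \subseteq \operatorname{im} \Pi_r = S^r(\C^n)$, and $v_j \in \operatorname{im} M$ for $\lambda_j \neq 0$.) Now apply Theorem \ref{theoQdF} to each such unit vector $v_j \in S^r(\C^n)$: for the given $d$ there is $\tau_j \in \mathcal{C}_d(\C^n)$ with $\|\Tr_{r-d}(v_j v_j^*) - \tau_j\|_1 \le 4d(n-1)/(r+1)$.

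Next I would take the convex combination $\tau := \sum_j \lambda_j \tau_j$. Since $\mathcal{C}_d(\C^n)$ is by definition a convex set (it is $\operatorname{conv}(\cdots)$) and the $\lambda_j$ are nonnegative summing to $1$, we get $\tau \in \mathcal{C}_d(\C^n)$. Then, using linearity of the partial trace, $\Tr_{r-d}(M) = \sum_j \lambda_j \Tr_{r-d}(v_j v_j^*)$, so by the triangle inequality for $\|\cdot\|_1$ and convexity of the bound,
\begin{align*}
\|\Tr_{r-d}(M) - \tau\|_1 \le \sum_j \lambda_j \|\Tr_{r-d}(v_j v_j^*) - \tau_j\|_1 \le \sum_j \lambda_j \cdot \frac{4d(n-1)}{r+1} = \frac{4d(n-1)}{r+1},
\end{align*}
which is exactly \eqref{eqQdFCM}.

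The only genuine subtlety — and the step I would flag as the main obstacle — is justifying that the eigenvectors $v_j$ associated to nonzero eigenvalues lie in the symmetric subspace $S^r(\C^n)$, so that Theorem \ref{theoQdF} applies to each of them verbatim. This needs the elementary but essential fact that a Hermitian matrix satisfying $\Pi_r M \Pi_r = M$ has its range inside $\operatorname{ran}(\Pi_r) = S^r(\C^n)$, together with the observation from Definition \ref{defdoublesymmetric} that for Hermitian $M$ the conditions "$r$-column symmetric", "$r$-row symmetric", and $\Pi_r M \Pi_r = M$ all coincide. Once that is in place, everything else is just convexity and linearity, so the corollary follows with no loss in the constant. (One could alternatively purify $M$ as $vv^*$ for a unit vector $v$ in a symmetric subspace of a doubled space and trace out, but the eigendecomposition argument above is cleaner and keeps the constant sharp.)
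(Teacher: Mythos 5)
Your proof is correct and follows essentially the same route as the paper: spectral decomposition of $M$, observing that eigenvectors for nonzero eigenvalues lie in $S^r(\C^n)$ because $M$ is doubly symmetric, applying Theorem \ref{theoQdF} to each, and taking the convex combination $\tau=\sum_j\lambda_j\tau_j$ with the triangle inequality. The extra care you take in justifying that $\operatorname{im}(M)\subseteq S^r(\C^n)$ is a point the paper only asserts parenthetically, so your write-up is, if anything, slightly more complete.
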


\begin{proof}
Consider the spectral decomposition $M=\sum_l \lambda_l v_lv_l^*$, where the eigenvalues $\lambda_l$ are nonnegative and the eigenvectors $v_l$ are unit vectors in  $S^r(\C^n)$ (since $M$ is doubly symmetric). 
By Theorem \ref{theoQdF}, for each $l$  there exists  $\tau_l\in \mathcal{C}_d(\C^n)$ that satisfies (\ref{eqQdFCv}) w.r.t. $v_l$. For  $\tau:=\sum_l \lambda_l \tau_l$ one can easily see that (\ref{eqQdFCM}) holds, since $ \|\Tr_{r-d}(M)-\tau\|_1 \le (\sum_{l} \lambda_l)    \max_l \|\Tr_{r-d}(v_lv_l^*) - \tau_l\|_{1} $ and $ \sum_l \lambda_l = \Tr(M) = 1$. 
\end{proof}

The above results are stated for complex Hermitian matrices. One may wonder whether they extend to real doubly symmetric matrices, now achieving membership in the set 
\begin{align}\label{eq:Cd-definition}
	\mathcal{C}_d(\R^n) = \conv(u^{\ot d}(u^{\otimes d})^T \mid u\in \R^n, \|u\|_2 =1 ),
\end{align}
\tcolblue{(compare with the cone in Proposition \ref{propsosP}(iii)),} instead of the set $\mathcal{C}_d(\C^n)$ from (\ref{eqtauCd}). The answer is negative, as we will show later in \Cref{prop:nonexistence-of-real-qdf} by giving a family of counterexamples.

On the other hand, a real analog of Corollary \ref{corQdF} has been shown by Doherty and Wehner \cite{Doherty_Wehner_2013}  when making a stronger assumption on the matrix $M$, namely assuming  $M$ is maximally symmetric instead of just doubly symmetric. 

\begin{theorem}[Doherty and Wehner \cite{Doherty_Wehner_2013}, Real QdF, for maximally symmetric matrices] 
\label{theoDW}
Let $ n,d,r\in \N $ with $ n\ge 3$ and $r>d$. Let $M$ be a real symmetric matrix indexed by $[n]^r$. Assume $M$ is maximally symmetric, positive semidefinite and $\Tr(M)=1$.
Then, there exists a matrix $ \tau\in\mathcal{C}_d(\R^n) $  such that 
\begin{align*}
	\|\Tr_{r-d} (M) - \tau\|_{F1} \le \frac{c_{n, d}}{r}, 
\end{align*}
where $ c_{n, d} $ is a constant  depending on $ n, d $, and $\|\cdot \|_{F1}$ is defined in Definition~\ref{defnorm}.
\end{theorem}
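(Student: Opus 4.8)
The plan is to prove the theorem by the polynomial kernel method applied on the \emph{dual} (moment) side, i.e.\ via an adjoint kernel operator, reducing the matrix statement to an approximation estimate between two linear functionals on $R_{2d}$.

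\textbf{Step 1 (reformulation).} By \Cref{lem:moment-matrix-equiv-maxsym}, $M$ is the tensor-indexed moment matrix $L(x^{\ot r}(x^{\ot r})^{T})$ of a unique $L\in R_{2r}^{\ast}$; by \Cref{propsosP}, $M\succeq 0$ is equivalent to $L\in\Sigma_r^{\ast}$ and $\Tr(M)=1$ to $L(s^r)=1$. A direct computation gives $(\Tr_{r-d}(M))_{\ui,\uj}=L(x^{\ui}x^{\uj}s^{r-d})$ for $\ui,\uj\in[n]^d$, so $\Tr_{r-d}(M)$ is the tensor-indexed moment matrix of $L':=L(\,\cdot\,s^{r-d})\in R_{2d}^{\ast}$. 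Hence, by \Cref{defnorm}, it suffices to produce $\tilde L\in\MP_d(\oS^{n-1})^{\ast}$ with $\tilde L(\|x\|_2^{2d})=1$ and $\|L'-\tilde L\|_1\le c_{n,d}/r$: then \Cref{theoPdeq} gives $\tilde L$ a representing probability measure $\nu$ on $\oS^{n-1}$, $\tau:=\int u^{\ot d}(u^{\ot d})^{T}\,d\nu(u)\in\mathcal{C}_d(\R^n)$ is the moment matrix of $\tilde L$, and $\|\Tr_{r-d}(M)-\tau\|_{F1}=\|L'-\tilde L\|_1$.

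\textbf{Step 2 (the kernel operator).} Let $\pi$ be the Haar probability measure on $\oS^{n-1}$. Choose an even polynomial $f_r(t)=\sum_{k=0}^{r}c_k t^{2k}$ of degree $2r$ with $c_k\ge 0$, normalized so that $\int_{\oS^{n-1}}f_r(\langle e_1,y\rangle)\,d\pi(y)=1$ (a concrete choice: a normalization of $(1+t^2)^r$; the original Doherty--Wehner argument uses a related classical kernel). For $g\in R_{2d}$ set $\mathcal{K}_r(g)(x):=\int_{\oS^{n-1}}\widehat f_r(x,y)\,g(y)\,d\pi(y)\in R_{2r}$, where $\widehat f_r(x,y):=\sum_k c_k\langle x,y\rangle^{2k}\|x\|_2^{2(r-k)}$ is the degree-$2r$ homogenization in $x$ of $f_r(\langle x,y\rangle)$. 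Two facts are needed: (i) for each unit $y$, $\widehat f_r(\cdot,y)=\sum_k c_k(\langle x,y\rangle^k)^2(\|x\|_2^2)^{r-k}$ is a sum of squares of degree $2r$, so $\mathcal{K}_r(g)\in\Sigma_r$ whenever $g\ge 0$ on $\oS^{n-1}$ (an integral of squares with nonnegative weights, $\Sigma_r$ being closed); (ii) by the Funk--Hecke formula, $\mathcal{K}_r$ acts diagonally on spherical harmonics -- if $g|_{\oS^{n-1}}=\sum_{k=0}^{d}Y_{2k}$ is the decomposition into harmonics of even degree, then $\mathcal{K}_r(g)|_{\oS^{n-1}}=\sum_{k=0}^{d}\lambda_{r,2k}Y_{2k}$ with $\lambda_{r,0}=1$ and $|1-\lambda_{r,2k}|\le\gamma_{n,d}/r$, this last bound (from quantitative estimates on the Gegenbauer integrals defining the multipliers) being the source of the $1/r$ rate.

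\textbf{Step 3 (the functional and the error bound).} Define $\tilde L(g):=L(\mathcal{K}_r(g))$ for $g\in R_{2d}$. By (i), $\tilde L\in\MP_d(\oS^{n-1})^{\ast}$; and since $\mathcal{K}_r(\|x\|_2^{2d})=s^r$ (the unique degree-$2r$ form equal to $1$ on the sphere, by the normalization), $\tilde L(\|x\|_2^{2d})=L(s^r)=1$, so $\nu$ is a probability measure and $\tau\in\mathcal{C}_d(\R^n)$. For the error, $L'(g)-\tilde L(g)=L\big(g\|x\|_2^{2(r-d)}-\mathcal{K}_r(g)\big)$, and by (ii) the argument is the unique degree-$2r$ form restricting to $\sum_k(1-\lambda_{r,2k})Y_{2k}$ on $\oS^{n-1}$, namely $\sum_{k=0}^{d}(1-\lambda_{r,2k})\,H_{2k}\|x\|_2^{2(r-k)}$ with $H_{2k}\in R_{2k}$ the harmonic form restricting to $Y_{2k}$. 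Writing $L(h)=\langle M,G_h\rangle$ for a representing matrix $G_h$ of $h\in R_{2r}$, choosing $G_{H_{2k}\|x\|_2^{2(r-k)}}=\MaxSym(H_{2k})\ot\Pi_{r-k}$, and using $\|M\|_1=\Tr(M)=1$ and $\|\Pi_{r-k}\|_\infty=1$, one gets $|L(H_{2k}\|x\|_2^{2(r-k)})|\le\|\MaxSym(H_{2k})\|_\infty\le a_{n,k}\|H_{2k}\|_\infty\le a_{n,k}b_{n,d}\|g\|_\infty$, where the last two inequalities are norm equivalences on the finite-dimensional spaces $R_{2k}$ and $\{g|_{\oS^{n-1}}:g\in R_{2d}\}$ (constants depending only on $n,d$). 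Summing over $k\le d$ and inserting (ii) gives $\|L'-\tilde L\|_1\le c_{n,d}/r$, as required.

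\textbf{Main obstacle.} The crux is Step~2(ii): constructing a single degree-$2r$ kernel that simultaneously (a) homogenizes to an honest sum of squares, so that the adjoint maps $L$ into the cone of functionals with a sphere representing measure, and (b) has Funk--Hecke multipliers tending to $1$ at rate $O(1/r)$ on harmonics of degree $\le 2d$; establishing (b) requires careful estimates of the relevant Gegenbauer/beta integrals. It is exactly by replacing this classical kernel with the optimized Fang--Fawzi kernel, which improves the multiplier rate to $O(1/r^2)$, that one strengthens the bound to $O(1/r^2)$ and obtains \Cref{theoDWr2}.
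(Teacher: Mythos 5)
The paper does not itself prove \Cref{theoDW}; it cites it from Doherty--Wehner \cite{Doherty_Wehner_2013}. The closest thing in the paper is the proof of the strengthened \Cref{theoDWr2} in Section~\ref{secrealQdF}, and your proposal follows essentially the same template: pass to the dual moment side (your Step~1 is exactly the use of \Cref{lem:moment-matrix-equiv-maxsym} and \Cref{lem:partial-trace-is-restriction}), push $L$ through a kernel operator that preserves positivity to land in the cone of functionals with a sphere representing measure (your Step~3 with $\tilde L = L\circ\mathcal K_r$ plays the role of the paper's properties (P1*)--(P2*) together with \Cref{theoPdeq}), and quantify the error via a multiplier estimate on harmonics (your Step~2(ii) plays the role of property (P3)/(P3*)). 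You also correctly identify that substituting the Fang--Fawzi kernel for the classical zonal kernel is exactly what upgrades $O(1/r)$ to $O(1/r^2)$ and gives \Cref{theoDWr2}. The minor structural difference is that you work with an operator $R_{2d}\to R_{2r}$ and bound the error directly by expanding $s^{r-d}g-\mathcal K_r(g)$ in harmonics and choosing the explicit representing matrix $\MaxSym(H_{2k})\ot\Pi_{r-k}$ together with $\|M\|_1=1$; the paper works with $\bfK_r:R_{2r}\to R_{2r}$, uses its invertibility, and bounds $\|\bfK_r^*L\|_1\le 1$ via the representing measure before invoking (P3*). Both routes are sound and equivalent in substance.

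The one genuine gap is the one you flag yourself. Step~2(ii) -- that the Funk--Hecke multipliers of your chosen degree-$2r$ kernel satisfy $|1-\lambda_{r,2k}|\le\gamma_{n,d}/r$ for all $k\le d$ -- is asserted but not established, and this is the analytic heart of the entire statement; everything else in your write-up is bookkeeping around it. Moreover, the specific kernel you suggest, a normalization of $(1+t^2)^r$, is not the one used in \cite{Reznick_1995,Doherty_Wehner_2013} (they use $\langle x,y\rangle^{2r}$, i.e., $f_r(t)=t^{2r}$), so you would still have to carry out the Gegenbauer/beta-integral estimates for your choice to confirm the $O(1/r)$ decay of $1-\lambda_{r,2k}$, or simply switch to the standard $t^{2r}$ kernel for which these estimates are classical. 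Once that estimate is supplied, the remainder of your argument (the reformulation in Step~1, the sos positivity in Step~2(i), the definition and normalization of $\tilde L$, and the matrix-norm chain in Step~3) is correct.
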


\tcolblue{Finally, we recall some useful facts about harmonic polynomials, that we will use later   to show an improved analysis  for the above theorem (see Theorem \ref{theoDWr2}).}

\subsection{Harmonic polynomials}\label{secharmonic}

A polynomial $f\in R$ is called {\em harmonic} if it vanishes under the Laplace operator, i.e., if $\Delta f:=\sum_{i=1}^n {\partial^2f \over \partial x_i^2}=0$. For $r\ge 1$, let $H_r=\{f\in R_r: \Delta f=0\}$ denote the set of harmonic polynomials of degree $r$. Then, the space $R_r$ of homogeneous polynomials of degree $r$ decomposes as the direct sum $R_r=H_r\oplus sR_{r-2}$, where $ s = x_1^2 + \ldots + x_n^2 $ \MoL{(see Remark \ref{remharmonic})}. In particular, this gives the decomposition of $R_{2r}$ into harmonic subspaces:
\begin{align}\label{eqdecharmonic}
	R_{2r} = H_{2r} \oplus sH_{2r-2} \oplus s^2H_{2r-4} \oplus \ldots \oplus s^{r}H_{0}.
\end{align}
This decomposition arises from the irreducible $O(\R^n)$-submodules of the polynomial space.
Concretely, this means that every $ f\in R_{2r} $ has a unique decomposition
\begin{align}\label{eqdecharmonic-homogeneous-with-s}
	f= \sum_{k=0}^{r} s^{r-k} f_{2k},
\end{align}
where $ f_{2k}\in H_{2k} $ is harmonic of degree $2k$. As the polynomial 
$ s $ equals the constant $ 1 $ on the sphere, many authors prefer to write the harmonic decomposition as
\begin{align}\label{eqdecharmonic-homogeneous-without-s}
	f= \sum_{k=0}^{r} f_{2k} \text{ on } \oS^{n-1}.
\end{align}

\begin{remark}\label{remharmonic}
	For the irreducibility of the spaces $ H_r $ as $ O(\R^n) $-modules, we refer to \cite[Section 2.4]{Doherty_Wehner_2013} and the references therein. 
For the reader's convenience	we give a brief argument for the identity $ R_r = H_r \oplus sR_{r-2}$, the decompositions \eqref{eqdecharmonic} and \eqref{eqdecharmonic-homogeneous-with-s} are then an immediate consequence. Observe that $ R_{r} $ may be endowed with the well-known \emph{apolar inner product} (see, e.g., \cite[eq. (8)]{Aldaz_Render_2023}):  $ \langle f, g \rangle = f(\partial_1,\ldots,\partial_n) g $ for $f,g\in R_r$, where $ f(\partial_1,\ldots,\partial_n) $ is a polynomial expression in partial derivatives.  It follows directly from the definition that $ \langle f, sg \rangle = \langle \Delta f, g \rangle $ for any $ f\in R_{r}$ and $ g\in R_{r-2} $. Hence, $ H_r $ is the orthogonal complement of $ sR_{r-2} $ \ATB{and both spaces are $ \mathcal{O}(\R^n) $-invariant.} 
\end{remark}
\section{Improved  Real QdF  theorem for maximally symmetric matrices} \label{secrealQdF}

In this section, we revisit the real QdF theorem (Theorem \ref{theoDW}) of Doherty and Wehner  \cite{Doherty_Wehner_2013} for maximally symmetric matrices, and show an improved convergence rate in $O(1/r^2)$ instead of $O(1/r)$. For this, we use the analysis of the sos bounds by Fang and Fawzi \cite{Fang_Fawzi_2020}, who design a tailored linear operator $ \bfK_r $, which approximates nonnegative polynomials on the sphere by sums of squares of degree $ 2r $. This linear operator $ \bfK_r $ is constructed from a  polynomial integration kernel. We go one step further and use the adjoint polynomial kernel operator to show the following result.

\begin{theorem}\label{theoDWr2}
Let $M$ be a real symmetric matrix indexed by $[n]^r$ and consider an integer  $d\in \{1,\ldots,r\}$. Assume $M$ is maximally symmetric, positive semidefinite and $\Tr(M)=1$. Then, there exists $\tau\in \mathcal C_d(\R^n)$ 
such that 
\begin{align}
	\|\Tr_{r-d}(M)- \tau\|_{1} \le \|\Tr_{r-d}(M)- \tau\|_{F1}\leq c_d {n^2\over r^2}  \text{ for all } r\ge c'_d n,
	\label{eqDWF1}
\end{align}
Here, $c_d $ and $ c_d'$ are the constants depending only on $d$ from Theorem \ref{theoFF} below, $\|\cdot \|_{F1}$ is as in Definition \ref{defnorm}, $\|\cdot\|_1$ is the Schatten 1-norm, and $ \mathcal{C}_d(\R^n) $ is as defined in \eqref{eq:Cd-definition}. 
\end{theorem}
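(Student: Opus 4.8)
The plan is to dualize the Fang--Fawzi sos approximation operator. Recall their result (which the paper calls Theorem~\ref{theoFF}, referenced but not yet displayed in this excerpt): for a suitable linear operator $\bfK_r$ on polynomials, every nonnegative polynomial $f$ on the sphere of degree $2d$ is mapped to a sum of squares $\bfK_r(f)$ of degree $2r$ with $\|\bfK_r(f)-f\|_\infty \le c_d\,(n^2/r^2)\,\|f\|_\infty$ for $r\ge c_d' n$, where $\bfK_r$ preserves the degree-$2d$ part in a controlled way (it is built from convolution against an optimized polynomial kernel on the sphere). The key observation I would make is that the \emph{adjoint} operator $\bfK_r^*$ then maps a linear functional on $R_{2r}$ to a linear functional on $R_{2d}$, and, because $\bfK_r$ sends $\mathcal P_d(\oS^{n-1})$ into the sos cone $\Sigma_r$, the adjoint $\bfK_r^*$ sends the dual sos cone $\Sigma_r^*$ into $\mathcal P_d(\oS^{n-1})^*$; by Theorem~\ref{theoPdeq}, membership in the latter cone is exactly the statement that the (tensor-indexed) moment matrix lies in $\mathcal C_d(\R^n)$.

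Concretely, I would proceed as follows. First, since $M$ is maximally symmetric, positive semidefinite with $\Tr(M)=1$, by Lemma~\ref{lem:moment-matrix-equiv-maxsym} it is the tensor-indexed moment matrix of a linear functional $L\in R_{2r}^*$, and by Proposition~\ref{propsosP} the positive semidefiniteness of $M$ means $L\in\Sigma_r^*$. Next, I would check the compatibility between taking partial traces of moment matrices and restricting functionals: the partial trace $\Tr_{r-d}(M)$ corresponds (after maximal symmetrization, using $\PiMS(M)=M$) to the moment matrix of the restriction of $L$ to $R_{2d}$ — here one uses $\langle M, A\otimes I_n^{\ot(r-d)}\rangle = \langle \Tr_{r-d}(M),A\rangle$ together with the fact that $s^{r-d}=(x_1^2+\dots+x_n^2)^{r-d}$ represents $I_n^{\ot(r-d)}$ up to symmetrization, so that $\Tr_{r-d}(M)$ is (the symmetrization of) the moment matrix of $g\mapsto L(s^{r-d}g)$ on $R_{2d}$. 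Then I would set $\tau$ to be the tensor-indexed moment matrix of $\bfK_r^*(L)$, restricted appropriately; since $\bfK_r:\mathcal P_d(\oS^{n-1})\to\Sigma_r$ and $L\succeq0$ on $\Sigma_r$, the functional $\bfK_r^*(L)$ is nonnegative on $\mathcal P_d(\oS^{n-1})$, hence by Theorem~\ref{theoPdeq} (equivalently Proposition~\ref{propsosP}) has its moment matrix in $\mathcal C_d(\R^n)$, giving $\tau\in\mathcal C_d(\R^n)$. Finally, for the error bound, I would estimate $\|\Tr_{r-d}(M)-\tau\|_{F1}$ by translating back to functionals: this is $\|L_{2d} - \bfK_r^*(L)\|_1$ where $L_{2d}$ is the relevant restriction of $L$, and by duality of $\|\cdot\|_1$ and $\|\cdot\|_\infty$ on $R_{2d}$ this equals $\sup\{ (L_{2d}-\bfK_r^*L)(f) : \|f\|_\infty\le1\} = \sup\{ L(s^{r-d}f - \bfK_r f) : \|f\|_\infty\le 1\}$; bounding this by $\|L\|_1\cdot\sup_{\|f\|_\infty\le1}\|s^{r-d}f-\bfK_r f\|_\infty$ and using $\|L\|_1 = \Tr(M) = 1$ (since $L$ is a nonnegative functional with $L(s^r)=\Tr(M)$) together with the Fang--Fawzi estimate $\|s^{r-d}f - \bfK_r f\|_\infty\le c_d (n^2/r^2)$ on the sphere yields the claimed bound. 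The first inequality $\|\cdot\|_1\le\|\cdot\|_{F1}$ is just Lemma~\ref{lemnorm}, equation~\eqref{eqnormLM}.

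The main obstacle I anticipate is the bookkeeping around homogenization and partial traces: one must be careful that $\bfK_r$ as defined by Fang--Fawzi operates on polynomials modulo the ideal $\langle s-1\rangle$ (i.e.\ on functions on the sphere), whereas the tensor-indexed moment-matrix picture is genuinely homogeneous of degree $2r$, so the identification of $\Tr_{r-d}(M)$ with the functional $f\mapsto L(s^{r-d}f)$ and the normalization $\|L\|_1=1$ need to be set up so that the degree-raising by $s^{r-d}$ matches exactly what $\bfK_r$ does, and so that $\bfK_r^*(L)$ lands in $R_{2d}^*$ rather than some larger space. A secondary point is verifying that $\bfK_r$ indeed maps $\mathcal P_d(\oS^{n-1})$ into $\Sigma_r$ (not merely into nonnegative polynomials) — this is the crux of the Fang--Fawzi construction and must be invoked precisely, since it is exactly what makes the adjoint send $\Sigma_r^*$ into $\mathcal P_d(\oS^{n-1})^*$. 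Once these identifications are pinned down, the error estimate and the cone membership are essentially immediate consequences of adjointness.
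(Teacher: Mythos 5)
Your overall strategy — dualize the Fang--Fawzi kernel operator $\bfK_r$, observe that $\bfK_r^*$ maps the dual-sos cone into the cone of functionals with representing measures (by Theorem~\ref{theoPdeq}), define $\tau$ from the moment matrix of $\bfK_r^*L$, and estimate the $F1$-error by duality — is precisely the approach taken in the paper. However, there is a genuine gap in your error estimate, and it sits at exactly the point where the paper has to be careful.

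You write the error as $\sup\{L(s^{r-d}f-\bfK_r f):\|f\|_\infty\le1\}\le \|L\|_1\cdot\sup_{\|f\|_\infty\le 1}\|s^{r-d}f-\bfK_r f\|_\infty$ and then assert $\|L\|_1=\Tr(M)=1$. That last equality is not justified, and in general it is \emph{false}. By Definition~\ref{defnorm}, $\|L\|_1=\|M\|_{F1}$, and Lemma~\ref{lemnorm} only gives the one-sided bound $\Tr(M)=\|M\|_1\le\|M\|_{F1}=\|L\|_1$. A pseudo-moment functional $L$ that is nonnegative on $\Sigma_r$ but does not have a representing measure on the sphere will typically have $\|L\|_1>1$: if $f\in R_{2r}$ satisfies $\|f\|_\infty\le 1$ then $s^r\pm f\ge0$ on $\oS^{n-1}$, but since $s^r\pm f$ need not lie in $\Sigma_r$ you cannot conclude $L(s^r\pm f)\ge0$, hence you cannot conclude $|L(f)|\le L(s^r)=1$. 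In fact, the discrepancy between $\|L\|_1$ and $\Tr(M)$ is a measure of exactly how far $L$ is from having a representing measure — so assuming $\|L\|_1=1$ essentially assumes the conclusion you are trying to prove.

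The paper sidesteps this by applying the dual error estimate to the \emph{regularized} functional $\widehat L:=\bfK_r^*L$ rather than to $L$ itself. This is why property (P3) is stated for the inverse operator $\bfK_r^{-1}$ (and dually (P3*) for $(\bfK_r^*)^{-1}$): writing $L=(\bfK_r^*)^{-1}\widehat L$, one has $\|(L-\bfK_r^*L)_{|2d}\|_1=\|((\bfK_r^*)^{-1}\widehat L-\widehat L)_{|2d}\|_1\le\varepsilon_r\|\widehat L\|_1$. Now $\|\widehat L\|_1\le1$ is immediate because $\widehat L$ \emph{does} have a representing probability measure on the sphere (by (P1*) and (P2*)). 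So your argument becomes correct if you replace "$\|L\|_1=1$" with this two-step reasoning: (i) $\widehat L=\bfK_r^*L$ is a probability measure functional, hence $\|\widehat L\|_1\le 1$; (ii) express $L$ as $(\bfK_r^*)^{-1}\widehat L$ and invoke the dualized form of (P3) applied to $\widehat L$. The rest of your outline (the cone membership of $\tau$, the identification $\Tr_{r-d}(M)\leftrightarrow L_{|2d}$, and the invocation of Lemma~\ref{lemnorm} for the first inequality) is correct and matches the paper.
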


\subsection{Key ingredients}\label{secub}

Let us denote $ s = x_1^2  + \ldots + x_n^2 $ and 
\begin{align*}
	\MQ_{2r}=\{\sigma+u(1-s): \sigma \text{ sos},\ \deg(\sigma)\le 2r,\ u\in \R[x]_{\le 2r-2}\},
\end{align*}
known as the quadratic module of the sphere, truncated at degree $2r$. 

The above improved real QdF theorem for maximally symmetric matrices (Theorem \ref{theoDWr2}) can be obtained as a consequence of the following result of Fang and Fawzi, more precisely,  from their proof technique. 
\begin{theorem}[Fang and Fawzi \cite{Fang_Fawzi_2020}]
	\label{theoFF}
	Let $p$ be an $n$-variate {homogeneous} polynomial of degree $2d$ with $d\ge 1, n\ge 3$. Then, there exist constants $ c_d $ and $ c_d' $ only depending on $ d $, such that, for all $r\ge c_d' n $, it holds that
	\begin{align}
		& p-p_{\min}+ c_d (p_{\max}-p_{\min}) \Big({n\over r}\Big )^2\in \MQ_{2r},\label{eqFF}\\
		& p_{\min}-\sos_r(p) \le c_d  (p_{\max}-p_{\min}) \Big({n\over r}\Big )^2.\label{eqFF1}
	\end{align}
\end{theorem}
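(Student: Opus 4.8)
The plan is to establish the containment \eqref{eqFF}; the inequality \eqref{eqFF1} then follows at once, since if $p-p_{\min}+c_d(p_{\max}-p_{\min})(n/r)^2=\sigma+u(1-s)$ with $\sigma$ an sos of degree $\le 2r$ and $\deg u\le 2r-2$, then $\lambda:=p_{\min}-c_d(p_{\max}-p_{\min})(n/r)^2$ is feasible for the program \eqref{eqsosr} defining $\sos_r(p)$, so $\sos_r(p)\ge\lambda$. First I would dispose of the trivial case $p_{\max}=p_{\min}$ (then $p-p_{\min}$ is a multiple of $1-s$ and both claims hold), and otherwise normalize: replacing $p$ by the form $g:=(p-p_{\min}\,s^d)/(p_{\max}-p_{\min})$ --- which differs from $(p-p_{\min})/(p_{\max}-p_{\min})$ by a multiple of $1-s$ of degree $\le 2d-2$ --- reduces the task to showing that whenever $g$ is a form of degree $2d$ with $0\le g\le 1$ on $\oS^{n-1}$ and $r\ge c_d'n$, one has $g+c_d(n/r)^2\in\MQ_{2r}$.

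The main tool is the \emph{polynomial kernel method}. Let $\sigma$ denote the uniform probability measure on $\oS^{n-1}$. I would fix a univariate polynomial $u_r$ of degree $2r$ that is nonnegative on $[-1,1]$ and normalized so that its $0$-th Gegenbauer coefficient --- equivalently $\int_{\oS^{n-1}}u_r(e^Ty)\,\dif\sigma(y)$ for a unit vector $e$ --- equals $1$, and consider the linear operator $(\Phi_r f)(x)=\int_{\oS^{n-1}}f(y)\,u_r(x^Ty)\,\dif\sigma(y)$, a polynomial in $x$ of degree $\le 2r$ that depends only on $f|_{\oS^{n-1}}$. Two facts drive the argument. (i) $\Phi_r(g)\in\MQ_{2r}$: writing $u_r(t)=a(t)+(1-t^2)b(t)$ with $a,b$ univariate sos of degrees $\le 2r$ and $\le 2r-2$ (Markov--Lukács), and using, for each unit vector $y$, the polynomial identity in $x$ given by $1-(x^Ty)^2=(1-s)+\sum_{i<j}(x_iy_j-x_jy_i)^2$ (Lagrange), one gets $u_r(x^Ty)=\big(a(x^Ty)+(\sum_{i<j}(x_iy_j-x_jy_i)^2)\,b(x^Ty)\big)+(1-s)\,b(x^Ty)$ on $\oS^{n-1}$, where the first summand is a sum of squares of polynomials in $x$ of degree $\le r$ with polynomial coefficients in $y$; integrating against $g$ and using $g\ge 0$ on $\oS^{n-1}$ (so that the Gram matrix $\int_{\oS^{n-1}}g(y)\,c(y)\,c(y)^T\dif\sigma(y)$ in the coefficient vectors $c(y)$ is positive semidefinite) exhibits $\Phi_r(g)$ as an sos of degree $\le 2r$ plus a $(1-s)$-multiple of degree $\le 2r-2$. (ii) By the Funk--Hecke theorem, $\Phi_r$ acts on the space $H_k$ of degree-$k$ spherical harmonics as the scalar $\lambda_k^{(r)}$, the $k$-th (normalized) Gegenbauer coefficient of $u_r$, with $\lambda_0^{(r)}=1$; hence, using the harmonic decomposition $g=\sum_{j=0}^d h_j$, $h_j\in H_{2j}$, on the sphere (cf.\ \eqref{eqdecharmonic-homogeneous-without-s}), one obtains $\Phi_r(g)=\sum_{j=0}^d\lambda_{2j}^{(r)}h_j$ on $\oS^{n-1}$.

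The heart of the proof --- and the step I expect to be the main obstacle --- is the choice of $u_r$: it must be designed so that $1-\lambda_{2j}^{(r)}$ is as small as possible for the \emph{finitely many} indices $j=1,\dots,d$ while $u_r\ge 0$ on $[-1,1]$. Following the extremal-polynomial construction of Fang and Fawzi \cite{Fang_Fawzi_2020} --- morally, $u_r(t)\approx P(t)^2(1-t^2)^{(n-3)/2}$ with $P$ of degree $\approx r$ concentrated near $t=1$ at the Markov scale $\approx 1/r^2$, once one has corrected for the concentration of the Jacobi weight at $t=0$ as $n\to\infty$, which is what forces the restriction $r\gtrsim n$ --- one aims for the estimate
\begin{align*}
\eta_r:=\|g-\Phi_r(g)\|_{\infty,\oS^{n-1}}=\Big\|\sum_{j=1}^d(1-\lambda_{2j}^{(r)})h_j\Big\|_{\infty,\oS^{n-1}}\le c_d\,\frac{n^2}{r^2}\qquad\text{for all }r\ge c_d'n,
\end{align*}
where one also uses that the harmonic projections $g\mapsto h_j$ are bounded on the space of degree-$2d$ forms in $\|\cdot\|_{\infty,\oS^{n-1}}$ with constant polynomial in $n$, and $\|g\|_{\infty,\oS^{n-1}}=1$. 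Proving this estimate --- in effect, sharp endpoint asymptotics for Gegenbauer/Jacobi polynomials --- is the technical core, and it is exactly where the rate improves from $O(1/r)$ (Reznick's kernel $(x^Ty)^{2r}$) to $O(1/r^2)$.

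Granting the estimate, I would finish as follows (if $\eta_r=0$, already $g=\Phi_r(g)+(1-s)(\cdots)\in\MQ_{2r}$, so assume $\eta_r>0$). Put $\hat q:=\sum_{j=1}^d(1-\lambda_{2j}^{(r)})h_j+2\eta_r\,s^d$, a form of degree $2d$; on $\oS^{n-1}$ it equals $(g-\Phi_r(g))+2\eta_r$, so $\eta_r\le\hat q\le 3\eta_r$ there, and $\hat q$ is positive definite with $\max_{\oS^{n-1}}\hat q/\min_{\oS^{n-1}}\hat q\le 3$. Reznick's effective Positivstellensatz \cite{Reznick_1995} then gives $s^N\hat q$ sos for some $N=O(nd^2)$ depending only on $n$, $d$ and this ratio (not on $\eta_r$); enlarging $c_d'$ so that $r\ge c_d'n$ forces $N\le r-d$, we obtain $s^N\hat q=\sigma_0$ with $\sigma_0$ sos of degree $\le 2r$, and then $\hat q=\sigma_0+(1-s)(1+s+\dots+s^{N-1})\hat q\in\MQ_{2r}$. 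Finally, comparing harmonic components on $\oS^{n-1}$ shows $g+2\eta_r=\Phi_r(g)+\hat q+(1-s)\,w$ for some $w$ with $\deg w\le 2r-2$; since $\Phi_r(g)$, $\hat q$ and $(1-s)w$ all lie in $\MQ_{2r}$, so does $g+2\eta_r$. As $2\eta_r\le 2c_d(n/r)^2$, after renaming $c_d$ and undoing the normalization (absorbing one further $(1-s)$-multiple of degree $\le 2d-2$) this gives \eqref{eqFF}, hence \eqref{eqFF1}.
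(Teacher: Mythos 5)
Your outline is correct and runs on the same engine as the paper's treatment of this theorem: the Fang--Fawzi polynomial kernel method, with the genuinely hard step --- constructing a nonnegative kernel $u_r$ whose Funk--Hecke multipliers give $\eta_r\le c_d\,n^2/r^2$ for $r\ge c_d'n$ --- explicitly black-boxed, exactly as the paper black-boxes the existence of an operator $\bfK_r$ satisfying (P1)--(P3) with $\varepsilon_r\le c_d(n/r)^2$. Where you genuinely differ is the endgame. The paper (Section \ref{secub}, Lemma \ref{lemkernelop1}) exploits the \emph{invertibility} of $\bfK_r$: it applies $\bfK_r^{-1}$ to $q_\epsilon=q+\epsilon_r s^r$, uses (P1) and (P3) to get $\bfK_r^{-1}(q_\epsilon)\ge q_\epsilon-\epsilon_r s^r\ge 0$ on $\oS^{n-1}$, and then uses (P2) to conclude $q_\epsilon=\bfK_r(\bfK_r^{-1}(q_\epsilon))\in\Sigma_r\subseteq\MQ_{2r}$; the error is absorbed \emph{before} converting to sums of squares, so no further positivity certificate is needed. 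You instead apply the forward operator $\Phi_r$ and must certify the residual $\hat q$ separately, which you do via Reznick's effective Positivstellensatz for a positive definite form of condition number at most $3$. This works --- the resulting Reznick degree fits inside $r\ge c_d'n$ after enlarging $c_d'$ --- but it imports an extra theorem that the inverse-operator trick renders unnecessary; what it buys you is that you never need $\Phi_r$ to be invertible nor a bound on $\Phi_r^{-1}$. Two small cautions: your $\hat q=\sum_j(1-\lambda_{2j}^{(r)})h_j+2\eta_r s^d$ is not homogeneous as written (the $h_j$ have degrees $2j$); you want $\sum_j(1-\lambda_{2j}^{(r)})s^{d-j}h_j+2\eta_r s^d$, as in \eqref{eqdecharmonic-homogeneous-with-s}. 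And the remark that the harmonic projections $g\mapsto h_j$ are bounded ``with constant polynomial in $n$'' cannot be taken literally in the final estimate --- any factor growing with $n$ would degrade the $n^2/r^2$ rate unless compensated by the multipliers --- so this bound must be treated as part of the deferred Fang--Fawzi estimate rather than as a harmless add-on.
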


So, the above result shows that any homogeneous polynomial that is nonnegative on the sphere can be well approximated by a higher degree sum of squares and it gives a quantitative analysis for the quality of the approximation.  As recalled earlier  in \Cref{sec:prelims} 
(\Cref{propsosP}), the cone of nonnegative polynomials on the sphere is in conic duality with the cone of linear functionals having a representing measure on the sphere. 
A natural question is whether there is an analog on the dual side of the quantitative result of Fang and Fawzi \cite{Fang_Fawzi_2020}. So, one may ask whether a linear functional that is nonnegative on  sums of squares on the sphere can be approximated in ``lower degree" by one that has a representing measure on the sphere. Informally, such a linear functional is given by its positive semidefinite moment matrix and lowering the degree will correspond to taking the partial trace. 

As we explain below, one can derive such a result.
A crucial  reason why such a derivation is possible is that Fang and Fawzi \cite{Fang_Fawzi_2020} use a \emph{linear} operator $ \bfK_r $ to approximate a nonnegative polynomial function on the sphere by a sum of squares. 

Fang and Fawzi use the harmonic expansion \eqref{eqdecharmonic-homogeneous-without-s}, which was introduced in \Cref{secharmonic}. They define an operator $ \bigoplus_{k = 0}^{2r} H_{2k} \to \bigoplus_{k = 0}^{2r} H_{2k} $, which is invertible, $ O(n) $-equivariant and acts on sums of harmonic polynomials of even degree. 
For technical reasons, we prefer to work with the formulation \eqref{eqdecharmonic-homogeneous-with-s} of the harmonic expansion. This way, we stay within the framework of homogeneous polynomials and we  obtain an operator $ \bfK_r\colon R_{2r}\to R_{2r} $, simply by replacing $ \bigoplus_{k = 0}^{2r} H_{2k} $ with the isomorphic space $ R_{2r} $, as   in \eqref{eqdecharmonic}. The operator $ \bfK_r\colon R_{2r} \to R_{2r} $ has the following properties:
\begin{itemize}
	\item[(P1)] $\bfK_r s^r = s^r $. 
	\item[(P2)] If $ f \in \mathcal{P}_r $, then $\bfK_r f \in \Sigma_{r}$. 
	\item[(P3)] $\|\bfK_r^{-1} f - f\|_\infty \le \epsilon_r \|f\|_\infty$ for any $f\in s^{r-d}R_{2d}$. 
\end{itemize}

Ideally, one aims for an operator $ \bfK_r $, where $ \varepsilon_r $ is as small as possible. 
Fang and Fawzi \cite{Fang_Fawzi_2020}  show that for every $ n, d $ and $ r $, there exists an operator $ \bfK_r $ with $ \varepsilon_r \le c_d ({n\over r})^2 $, if $ r\ge c_d'n $. Here, $ c_d $ and $ c_d' $ are some constants, which only depend on $ d $. The following lemma shows that \Cref{theoFF} follows immediately from the existence of a ``good'' linear operator. 

\begin{lemma}\label{lemkernelop1}
	Assume that $\bfK_r$ satisfies (P1), (P2) and (P3). For $p\in R_{2d}$, we have 
	$$ p - p_{\min} + \epsilon_r(p_{\max}-p_{\min})\in \MQ_{2r}.$$
\end{lemma}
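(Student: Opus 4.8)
The plan is to apply the operator $\bfK_r$ to a suitable shifted-and-homogenized version of $p$, exploiting the three properties (P1)--(P3). Concretely, fix $p\in R_{2d}$ and set
\[
q := \bfK_r^{-1}\big(s^{r-d}(p - p_{\min}\, s^d)\big)\ +\ \epsilon_r(p_{\max}-p_{\min})\, s^r .
\]
Here $s^{r-d}(p - p_{\min}\,s^d)\in s^{r-d}R_{2d}$ agrees with $p - p_{\min}$ on the sphere, and it is a nonnegative form on $\R^n$ (being a product of the nonnegative form $s^{r-d}$ with the form $p - p_{\min} s^d$, which is nonnegative everywhere since $p\ge p_{\min}$ on $\oS^{n-1}$ and both are homogeneous of degree $2d$). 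The first step is to check that $\bfK_r q$ is a sum of squares of degree $2r$: by (P1) the second summand is mapped to $\epsilon_r(p_{\max}-p_{\min})s^r$, which is a square since $r$ is understood to absorb parity appropriately (more precisely $s^r$ is sos); and by (P2), applied to $f = s^{r-d}(p - p_{\min} s^d)\in\MP_r$, the operator $\bfK_r$ sends the first summand $\bfK_r^{-1}(\cdot)$ — wait, this needs care: $\bfK_r$ applied to $\bfK_r^{-1}(f)$ returns $f$ itself, which is nonnegative but not obviously sos. So instead I would set
\[
q := s^{r-d}(p - p_{\min} s^d)\ +\ \epsilon_r(p_{\max}-p_{\min})\, s^r,
\]
and consider $\bfK_r q$. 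By (P2) the first term maps into $\Sigma_r$, and by (P1) the second term maps to $\epsilon_r(p_{\max}-p_{\min})s^r\in\Sigma_r$, so $\bfK_r q\in\Sigma_r$.

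The second step is to relate $\bfK_r q$ back to $p - p_{\min}$ on the sphere. Write $g := \bfK_r q$; then $\bfK_r^{-1} g = q$. By property (P3) applied to the form $g$ — which lies in $s^{r-d}R_{2d}\oplus \R s^r \subseteq R_{2r}$, hence in a space to which (P3) applies — we get $\|\bfK_r^{-1} g - g\|_\infty\le \epsilon_r\|g\|_\infty$, i.e. $\|q - g\|_\infty\le\epsilon_r\|g\|_\infty$. Actually the cleaner route is: (P3) gives $\|q - \bfK_r q\|_\infty \le \epsilon_r\|\bfK_r q\|_\infty$ directly if $\bfK_r q$ lies in the right subspace, but it is $q$ that manifestly lies in $s^{r-d}R_{2d}$ (note $s^r = s^{r-d}\cdot s^d\in s^{r-d}R_{2d}$), so I would instead invoke (P3) in the form $\|\bfK_r^{-1}(\bfK_r q) - \bfK_r q\|_\infty$ — this is circular. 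The correct application: since $q\in s^{r-d}R_{2d}$, property (P3) with $f = q$ reads $\|\bfK_r^{-1} q - q\|_\infty\le\epsilon_r\|q\|_\infty$. That is not quite what I want either. Let me restructure: I want to show $p - p_{\min} + \epsilon_r(p_{\max}-p_{\min})\in\MQ_{2r}$. Since on the sphere $q = p - p_{\min} + \epsilon_r(p_{\max}-p_{\min})$, it suffices to show $q\in\MQ_{2r}$. Now $q - \bfK_r^{-1}(\bfK_r q)$ has the form $\mu s^r$ for a scalar correction only if... — the right statement is: write $\tilde q := \bfK_r^{-1}(\bfK_r q)= q$ is trivial. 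So the real content is: modify $q$ by a multiple of $(1-s)$ times something, plus use that $\bfK_r q$ is sos, plus estimate the defect $\|q - \bfK_r q\|_\infty$ using (P3) on $\bfK_r q$ — which is legitimate precisely when $\bfK_r q\in s^{r-d}R_{2d}$, and indeed one checks $\bfK_r$ preserves the subspace $s^{r-d}R_{2d}$ since it is $O(n)$-equivariant and this subspace is a sum of isotypic components $s^{r-k}H_{2k}$, $k\le d$.

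So the third step: with $\bfK_r q\in\Sigma_r$ and $\bfK_r q\in s^{r-d}R_{2d}$, property (P3) gives $\|\bfK_r^{-1}(\bfK_r q) - \bfK_r q\|_\infty = \|q - \bfK_r q\|_\infty\le\epsilon_r\|\bfK_r q\|_\infty$. Bounding $\|\bfK_r q\|_\infty\le\|q\|_\infty + \epsilon_r\|\bfK_r q\|_\infty$ yields, for $\epsilon_r<1$, $\|\bfK_r q\|_\infty\le\|q\|_\infty/(1-\epsilon_r)$, and one estimates $\|q\|_\infty\le (p_{\max}-p_{\min}) + \epsilon_r(p_{\max}-p_{\min})$ on the sphere — actually $\|q\|_\infty = \|p - p_{\min} + \epsilon_r(p_{\max}-p_{\min})\|_\infty\le (p_{\max}-p_{\min})(1+\epsilon_r)$. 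Then on the sphere, $q = \bfK_r q + (q - \bfK_r q)$, where $\bfK_r q\in\Sigma_r$ and $|q - \bfK_r q|\le\epsilon_r\|\bfK_r q\|_\infty$ pointwise on $\oS^{n-1}$; hence $q + \epsilon_r\|\bfK_r q\|_\infty \cdot s^d \equiv \bfK_r q + (\text{nonneg on sphere})$ and one folds the slack into the $u(1-s)$ term and re-absorbs constants so that the net shift is at most $\epsilon_r(p_{\max}-p_{\min})$ after the appropriate normalization of the operator. The main obstacle is exactly this bookkeeping: tracking the precise constant so that the final shift is $\epsilon_r(p_{\max}-p_{\min})$ and not a larger multiple, and justifying that $\bfK_r$ preserves the relevant finite-dimensional subspace so that (P3) is applicable to $\bfK_r q$. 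I expect the cleanest write-up normalizes things at the level of $f := (p-p_{\min})/(p_{\max}-p_{\min})$, which satisfies $0\le f\le 1$ on the sphere, reducing everything to showing $f + \epsilon_r\in\MQ_{2r}$, and there the argument via $\bfK_r(s^{r-d}f_{\mathrm{hom}})\in\Sigma_r$ together with the uniform estimate $\|s^{r-d}f_{\mathrm{hom}} - \bfK_r(s^{r-d}f_{\mathrm{hom}})\|_\infty\le\epsilon_r$ (using $\|f\|_\infty\le 1$ and that $\bfK_r$ is close to the identity in the relevant norm) closes immediately.
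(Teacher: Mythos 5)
Your first route is the right one and is essentially the paper's proof; you abandoned it prematurely because of a misapplication of (P2). Setting $q := \bfK_r^{-1}\bigl(s^{r-d}(p - p_{\min}s^d)\bigr) + \epsilon_r(p_{\max}-p_{\min})s^r$, you correctly note that $\bfK_r(\bfK_r^{-1}(f)) = f$ is ``nonnegative but not obviously sos'' if you apply (P2) term-by-term to $\bfK_r^{-1}(f)$ alone. But (P2) should be applied to the \emph{whole} of $q$. First verify $q\in\MP_r$: by (P3), since $\|s^{r-d}(p-p_{\min}s^d)\|_\infty = p_{\max}-p_{\min}$, one has $\bfK_r^{-1}\bigl(s^{r-d}(p-p_{\min}s^d)\bigr) \ge s^{r-d}(p-p_{\min}s^d) - \epsilon_r(p_{\max}-p_{\min})$ on $\oS^{n-1}$, so $q\ge 0$ on the sphere and hence everywhere by homogeneity; the added $\epsilon_r(p_{\max}-p_{\min})s^r$ term is there precisely to absorb the $\bfK_r^{-1}$ error. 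Now (P2) applied to $q$ gives $\bfK_r q = s^{r-d}(p-p_{\min}s^d) + \epsilon_r(p_{\max}-p_{\min})s^r\in \Sigma_r\subseteq\MQ_{2r}$, and reduction modulo $1-s$ yields the lemma with the exact constant. This is the paper's argument, which simply normalizes to $\|q\|_\infty=1$ first and writes $q_\epsilon := q + \epsilon_r s^r$; in particular the paper applies $\bfK_r^{-1}$ first, checks nonnegativity, and only then pushes forward with (P2), which is the step you were missing.

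Your second route (apply $\bfK_r$ forward to $q := s^{r-d}(p-p_{\min}s^d) + \epsilon_r(p_{\max}-p_{\min})s^r$ and then argue $q$ is close to the sos polynomial $\bfK_r q$) has a genuine gap that is worse than constant inflation. The error $q - \bfK_r q$ is a polynomial, not a constant, so bounding its sup-norm by $\epsilon_r\|\bfK_r q\|_\infty$ does not place $q$ in $\MQ_{2r}$: you would write $q = \bfK_r q + (q - \bfK_r q)$ where the first summand is sos but the second is merely small in sup-norm and generically not in $\MQ_{2r}$, and $\MQ_{2r}$ is a cone, not a vector space, so subtracting the error does not close. Adding a further constant shift to dominate the error only pushes the same difficulty one step back. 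The inverse-first order of operations is what avoids ever having to place the error term itself into a cone.
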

\begin{proof}
	Define the polynomial $q=(s^{r-d}p-p_{\min}s^{r})/(p_{\max}-p_{\min})\in s^{r-d}R_{2d} \subseteq R_{2r}$, which satisfies $q_{\min}=0$, $q_{\max}=1$, and $\|q\|_\infty=1$. 
	Set $q_{\epsilon}:=q+\epsilon_r s^{r}\in s^{r-d}R_{2d}$. It suffices to show that $q_\epsilon\in \Sigma_r$, since this implies that $q_\epsilon\in \MQ_{2r}$ (by \cite{deKlerk_Laurent_Parrilo_2005}), which in turn implies the desired relation in the lemma.
	We have $\|\bfK_r^{-1}(q_\epsilon)-q_\epsilon\|_\infty=\|\bfK_r^{-1}(q)-q \|_\infty\le \epsilon_r$, using (P1) for the equality and both (P3) and $\|q\|_\infty=1$ for the inequality. Therefore,  $\bfK_r^{-1}(q_\epsilon )\ge q_\epsilon -\epsilon_r s^{r} =q\ge 0$ on the unit sphere $\oS^{n-1}$. Thus, $\bfK_r^{-1}(q_\epsilon ) \in \MP_r$, since $\bfK_r^{-1}(q_\epsilon )$ is homogeneous. Using (P2),  this implies $q_\epsilon   =\bfK_r ( \bfK_r^{-1}(q_\epsilon  ))\in \Sigma_r$.
\end{proof}

\subsection{The adjoint polynomial kernel operator}
Let $\bfK_r:R_{2r}\to R_{2r}$ be the kernel operator from the previous section.
We now consider the adjoint operator $ \bfK_r^{\ast}\colon R_{2r}^* \to R_{2r}^*$. It maps $L\in R_{2r}^*$ to $\bfK_r^*L \in R_{2r}^*$, defined by
$$
(\bfK_r^*L)(f)= L(\bfK_r f)
\ \text{ for }  
\ f\in R_{2r}.$$
For $L\in R^*_{2r}$ and $d\le r$, we let $L_{|2d}\in R_{2d}^*$ denote the ``restriction'' to $R_{2d}$, defined by $ L_{|2d}(f) = L(s^{r-d}f) $ for any $ f\in R_{2d} $. Before we prove \Cref{theoDWr2}, we need the following   lemmas. 

\begin{lemma}\label{lem:partial-trace-is-restriction}
	Let $ L\in R_{2r}^{*} $ and $ M = L(x^{\ot r}(x^{\ot r})^T)  $ its (tensor-indexed) moment matrix. For $ d\le r $, the moment matrix of $ L_{|2d} $ equals the partial trace $ \tr_{r-d} M $. 
\end{lemma}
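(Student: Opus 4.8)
The plan is to unwind both sides of the claimed identity directly from the definitions and then match entries. Recall $L_{|2d} \in R_{2d}^*$ is defined by $L_{|2d}(f) = L(s^{r-d}f)$ for $f \in R_{2d}$, and that the tensor-indexed moment matrix of $L_{|2d}$ is the matrix indexed by $[n]^d$ with $(\ui,\uj)$-entry $L_{|2d}(x^{\ui}x^{\uj}) = L(s^{r-d}x^{\ui}x^{\uj})$ for $\ui,\uj \in [n]^d$. On the other side, $\tr_{r-d}(M)$ has $(\ui,\uj)$-entry $\sum_{\uk \in [n]^{r-d}} M_{\ui\uk,\uj\uk}$, and since $M = L(x^{\ot r}(x^{\ot r})^T)$ we have $M_{\ui\uk,\uj\uk} = L(x^{\ui\uk}x^{\uj\uk}) = L(x^{\ui}x^{\uj}x^{\uk}x^{\uk})$, using that $x^{\ui\uk} = x^{\ui}x^{\uk}$ under tensor indexing. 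So $(\tr_{r-d}M)_{\ui,\uj} = L\bigl(x^{\ui}x^{\uj}\sum_{\uk \in [n]^{r-d}} x^{\uk}x^{\uk}\bigr)$.

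Thus the whole statement reduces to the polynomial identity $\sum_{\uk \in [n]^{r-d}} x^{\uk}x^{\uk} = s^{r-d}$, where $s = x_1^2 + \ldots + x_n^2$. First I would verify this: $\sum_{\uk \in [n]^{r-d}} x^{\uk}x^{\uk} = \sum_{\uk \in [n]^{r-d}} x_{k_1}^2 \cdots x_{k_{r-d}}^2 = \Bigl(\sum_{k=1}^n x_k^2\Bigr)^{r-d} = s^{r-d}$, by expanding the power of the sum (the multinomial expansion of $s^{r-d}$ indexed exactly by sequences $\uk \in [n]^{r-d}$ up to the usual grouping). Plugging this back in gives $(\tr_{r-d}M)_{\ui,\uj} = L(s^{r-d}x^{\ui}x^{\uj}) = L_{|2d}(x^{\ui}x^{\uj})$, which is precisely the $(\ui,\uj)$-entry of the moment matrix of $L_{|2d}$. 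Since the two matrices agree entrywise, they are equal.

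Alternatively, and perhaps more cleanly, one can phrase this via the duality pairing used to define the partial trace. For any $A \in \R^{[n]^d \times [n]^d}$, write $A$ as a tensor-indexed representing object of a polynomial $g_A(x) = \langle A, (xx^T)^{\ot d}\rangle \in R_{2d}$; then $\langle \tr_{r-d}(M), A \rangle = \langle M, A \ot I_n^{\ot(r-d)} \rangle$. Now $A \ot I_n^{\ot(r-d)}$, read as a representing matrix, corresponds to the polynomial $g_A(x)\cdot \langle I_n, xx^T\rangle^{r-d} = g_A(x) s^{r-d}$, so $\langle M, A \ot I_n^{\ot(r-d)}\rangle = L(g_A \cdot s^{r-d}) = L_{|2d}(g_A) = \langle L_{|2d}(x^{\ot d}(x^{\ot d})^T), A\rangle$. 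Since this holds for all $A$, the moment matrix of $L_{|2d}$ equals $\tr_{r-d}(M)$.

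There is no real obstacle here; the only point requiring a little care is the bookkeeping of tensor indexing versus monomial indexing — making sure $x^{\ui\uk} = x^{\ui}x^{\uk}$ and that the sum over $\uk \in [n]^{r-d}$ of $x^{\uk}x^{\uk}$ genuinely reproduces $s^{r-d}$ rather than $s^{r-d}$ times a multinomial factor. The multinomial coefficients work out correctly precisely because the summation is over all ordered sequences $\uk \in [n]^{r-d}$ (not over multi-indices $\gamma \in \N^n_{r-d}$), which is the same convention under which $M$ and the partial trace are indexed, so everything is consistent. I would present the entrywise computation as the main argument and mention the duality-pairing version as a remark.
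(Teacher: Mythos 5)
Your proof is correct and is essentially the paper's argument spelled out entrywise: the paper's one-line computation $\tr_{r-d} M = L(\tr_{r-d}(x^{\ot r}(x^{\ot r})^T)) = L(s^{r-d}\, x^{\ot d}(x^{\ot d})^T) = L_{|2d}(x^{\ot d}(x^{\ot d})^T)$ uses exactly the polynomial identity $\sum_{\uk\in[n]^{r-d}} x^{\uk}x^{\uk} = s^{r-d}$ that you verify, together with the observation that the partial trace commutes with applying $L$ entrywise. Your duality-pairing variant is also a clean equivalent phrasing of the same idea.
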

\begin{proof}
\tcolblue{	Indeed, 
	$ \tr_{r-d} M = \tr_{r-d} (L(x^{\ot r}(x^{\ot r})^T))= L(\tr_{r-d} (x^{\ot r}(x^{\ot r})^T)) $ (by linearity of $L$). Combining with 
$	\tr_{r-d} (x^{\ot r}(x^{\ot r})^T)) =s^{r-d} x^{\ot d}(x^{\ot d})^T$, we obtain  that 	 
	$ \tr_{r-d} M= L(s^{r-d} x^{\ot d}(x^{\ot d})^T) = L_{|2d}(x^{\ot d}(x^{\ot d})^T)$, where the last identity follows by the definition of $L_{|2d}$.}
	\end{proof}

\begin{lemma}\label{lemkernelopdual}
	The adjoint operator $\bfK_r^*$ satisfies the following properties: for any $L\in R_{2r}^*$,
	\begin{itemize}
		\item[(P1*)] $(\bfK_r^*L)(s^r)=L(s^r)$.
		\item[(P2*)] If $L\ge 0$ on $\Sigma_{r}$, then $\bfK_r^*L$ has a representing measure on $\oS^{n-1}$.
		\item[(P3*)] $\|((\bfK_r^*)^{-1}L-L)_{|2d}\|_1 \le \varepsilon_r \|L\|_1$. 
	\end{itemize}
\end{lemma}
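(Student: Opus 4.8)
The plan is to derive each starred property (P1*), (P2*), (P3*) from its unstarred counterpart (P1), (P2), (P3) for $\bfK_r$ by dualizing, using only elementary adjoint manipulations. For (P1*), I would exploit the self-adjointness-free identity: $(\bfK_r^*L)(s^r) = L(\bfK_r s^r)$ directly from the definition of the adjoint, and then invoke (P1), which says $\bfK_r s^r = s^r$, to conclude $(\bfK_r^*L)(s^r) = L(s^r)$. This is immediate.

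For (P2*), the key observation is that, by \Cref{propsosP} (or \Cref{theoPdeq}), $L'$ having a representing measure on $\oS^{n-1}$ is equivalent to $L'$ being nonnegative on $\MP_r(\oS^{n-1})$, and in particular it suffices to show $\bfK_r^*L$ is nonnegative on $\Sigma_r$ — wait, that is too weak. Let me reconsider: I want to show $\bfK_r^*L \in \MP_r(\oS^{n-1})^*$, i.e.\ $(\bfK_r^*L)(f) \ge 0$ for every $f \in R_{2r}$ nonnegative on the sphere. Given such $f$, write $f = s^r g$ on the sphere isn't quite right either; instead I use that $f \in \MP_r$ after adding a multiple of $1-s$, or more cleanly: for $f \in \MP_r$ (globally nonnegative homogeneous of degree $2r$), $(\bfK_r^*L)(f) = L(\bfK_r f)$, and by (P2), $\bfK_r f \in \Sigma_r$, hence $L(\bfK_r f) \ge 0$ since $L \ge 0$ on $\Sigma_r$. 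This shows $\bfK_r^*L \ge 0$ on $\MP_r$, hence $\bfK_r^*L \in \MP_d^* = \MP_d(\oS^{n-1})^*$ by \Cref{theoPdeq}, so $\bfK_r^*L$ has a representing measure on $\oS^{n-1}$. The point that needs a line of care is that nonnegativity on the cone $\MP_r$ of \emph{globally} nonnegative forms is what (P2) delivers, and \Cref{theoPdeq} tells us this already guarantees a representing measure on the sphere.

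For (P3*), the identity $((\bfK_r^*)^{-1} - \mathrm{id})L = (\bfK_r^{-1} - \mathrm{id})^*L$ (taking adjoints commutes with inversion and with subtracting the identity) gives, for any $f \in R_{2r}$,
\begin{align*}
\big(((\bfK_r^*)^{-1}L - L)_{|2d}\big)(f) = \big((\bfK_r^*)^{-1}L - L\big)(s^{r-d}f) = L\big((\bfK_r^{-1} - \mathrm{id})(s^{r-d}f)\big)
\end{align*}
for $f \in R_{2d}$. Now $s^{r-d}f \in s^{r-d}R_{2d}$, so (P3) applies: $\|(\bfK_r^{-1} - \mathrm{id})(s^{r-d}f)\|_\infty \le \varepsilon_r \|s^{r-d}f\|_\infty = \varepsilon_r \|f\|_\infty$ (using $s \equiv 1$ on the sphere for the last equality). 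Therefore the displayed quantity is bounded in absolute value by $\|L\|_1 \cdot \varepsilon_r \|f\|_\infty$, and taking the supremum over $f \in R_{2d}$ with $\|f\|_\infty = 1$ yields $\|((\bfK_r^*)^{-1}L - L)_{|2d}\|_1 \le \varepsilon_r \|L\|_1$, as desired.

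The only mild subtlety — and the step I would flag as the one requiring attention rather than the one being hard — is bookkeeping around the two indexings: one must make sure that "restriction to $R_{2d}$" via multiplication by $s^{r-d}$ is the correct dual notion matching the partial trace (which is exactly \Cref{lem:partial-trace-is-restriction}), and that $\|s^{r-d}f\|_\infty = \|f\|_\infty$ because $s$ restricts to $1$ on $\oS^{n-1}$. None of these steps is an obstacle; the lemma is essentially a formal transpose of the already-established properties of $\bfK_r$, and the proof is short.
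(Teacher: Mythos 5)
Your proof is correct and follows essentially the same route as the paper: (P1*) is immediate from (P1) and the definition of the adjoint, (P2*) uses that $\bfK_r^\ast L$ is nonnegative on $\MP_r$ via (P2) and then invokes \Cref{theoPdeq}, and (P3*) uses the identity $((\bfK_r^\ast)^{-1}L)(h)=L(\bfK_r^{-1}h)$ together with duality of norms and (P3). The only difference is that you spell out more explicitly the bookkeeping around $\|s^{r-d}f\|_\infty=\|f\|_\infty$ (and a harmless typo writes $\MP_d^\ast$ where $\MP_r^\ast$ is meant), which the paper leaves implicit.
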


\begin{proof}
	(P1*) follows from (P1). We show (P2*). For this, assume $L\ge 0$ on $\Sigma_{r}$. For $f\in \mathcal{P}_r$, we have
	$\bfK_rf\in \Sigma_r$ by (P2), and thus $L(\bfK_rf)\ge 0$, which gives $(\bfK_r^*L)(f)\ge 0$. Hence, $\bfK_r^*L \ge 0$ on $\mathcal{P}_r$.  By Proposition \ref{propsosP}, we conclude that $\bfK_r^* L$ has a representing measure on $\oS^{n-1}$. We now show (P3*). 
	Let  $f\in s^{r-d}R_{2d}$. Then, 
	\begin{align*}
		|((\bfK_r^*)^{-1} L-L)(f)| = |L((\bfK_r)^{-1}f-f)| \le \|L\|_1 \cdot \|(\bfK_r)^{-1}f-f\|_\infty,
	\end{align*}
	using duality of the norms for the inequality. Combining with (P3) gives (P3*). 
\end{proof}

We now prove Theorem \ref{theoDWr2}.

\begin{proof}[Proof of Theorem \ref{theoDWr2}]
	Assume $M$ is a matrix indexed by $[n]^r$ that is maximally symmetric and positive semidefinite, with $\Tr(M)=1$. Let $ L\in R_{2r}^*$ be the associated linear functional, defined via $ L(x^{\ot r}(x^{\ot r})^T)=M$ (recall 
	Lemma~\ref{lem:moment-matrix-equiv-maxsym}).  Then, $ L(s^r) = \tr M = 1 $ and, by Proposition \ref{propsosP},  $ L\ge 0$ on $\Sigma_{r}$. 
	
	Using properties (P1*) and (P2*), we can conclude that the linear functional $\bfK_r^*L$ has a representing probability measure on $\oS^{n-1}$.  This implies  $\|\bfK_r^*L\|_1\le 1$. Indeed, if $f\in R_{2r}$ satisfies $\|f\|_\infty\le 1$, i.e., $-1\le f\le 1$ on $\oS^{n-1}$, then $ |\bfK_r^*L(f)|\le 1$ and thus $\|\bfK^*_rL\|_1\le 1$.
	Moreover, applying property (P3*) to $\widehat L:= \bfK_r^*L$ yields
	\begin{align}\label{eqboundL}
		\|(L-\bfK_r^* L)_{|2d}\|_1 = \|((\bfK_r^*)^{-1} \widehat L -\widehat L)_{|2d}\|_1 
		\tcolred{\le \varepsilon_r  \|\widehat L_{| 2d}\|_1}\le \varepsilon_r. 
	\end{align}
	Recall that if we choose $ \bfK_r $ to be the operator of Fang and Fawzi, we have $ \varepsilon_r \le c_d ({n\over r})^2 $ for all $ r\ge c_d'n $. Define the following matrix
	$$\tau= (\bfK_r^*L)_{|2d}(x^{\ot d}(x^{\ot d})^T).$$
	Then, $\tau\in \mathcal C_d(\R^n)$, since $\bfK_r^*L$ has a representing probability measure on $\oS^{n-1}$.
	Note also that  $\Tr_{r-d}(M)$ is the tensor-indexed moment matrix of $ L_{|2d} $, by \Cref{lem:partial-trace-is-restriction}.
	Hence, we have
	$$\|\Tr_{r-d}(M)-\tau\|_{F1}=\|(L-\bfK_r^*L)_{|2d}\|_1 \le \varepsilon_r \le c_d ({n\over r})^2. 
	$$
	using the definition of the norm $\|\cdot\|_{F1}$ in Definition \ref{defnorm} for the equality, and (\ref{eqboundL}) for the inequality. This shows the bound for the F1-norm in \eqref{eqDWF1}. 
	Combining with relation (\ref{eqnormLM}) in Lemma \ref{lemnorm} gives the first inequality on the 1-norm in \eqref{eqDWF1}, which completes the proof.
\end{proof}

\section{A banded real QdF theorem for doubly symmetric matrices}\label{secbandedQdF}

A natural question is whether the quantum de Finetti theorem (\Cref{theoQdF}) extends to {\em real} doubly symmetric matrices. 
In view of the description of $ \mathcal{P}_{d}^{\ast} $ from \Cref{propsosP},
one would like to approximate the partial trace of a real doubly symmetric matrix by an element of the set $\mathcal{C}_d(\R^n)$  from (\ref{eq:Cd-definition}).
Unfortunately, this is in general not possible, as we show in Section \ref{sec-counterexample}. However, in Section \ref{sec:banded}, we present a variation of the QdF theorem that does hold for real doubly symmetric matrices. 

\subsection{A counterexample for a real analog of the QdF theorem} \label{sec-counterexample}

Here, we show that the quantum de Finetti theorem (\Cref{theoQdF}) does not extend to real doubly symmetric matrices. 
For this, we present a counterexample (for the case $n=d=2$), which is adapted from the example in \cite[p. 21, Eq. (5.2) and the text thereafter]{Caves_Fuchs_Schack_2002} (for infinite QdF representations) to our setting.

\medskip
For $r\ge 2$, define the matrices
\begin{align}\label{eqrhor}
	\rho_r= {1\over 2} (uu^*)^{\ot r}+{1\over 2} (\overline u \overline u^*)^{\ot r}, \ \text{ where } u={1\over \sqrt 2}\left(\begin{matrix} 1\cr \bfi \end{matrix}\right)\in \C^2.
\end{align}
Then, $\rho_r$ is $r$-doubly symmetric, real valued and positive semidefinite, with trace 1. However, as we now show, there cannot exist elements $\tau_r\in \mathcal C_2(\R^n)$ that are asymptotically `close' to the partial traces $\Tr_{r-2}(\rho_r)$.

\begin{proposition}\label{prop:nonexistence-of-real-qdf}
	For any $r\ge 2$, consider the matrices $\rho_r$ from (\ref{eqrhor}), that are real, $r$-doubly symmetric, positive semidefinite, with trace 1. There do not exist elements $\tau_r\in \mathcal C_2(\R^n)$ ($r\ge 2$) for which $\lim_{r\to\infty} \|\Tr_{r-2}(\rho_r)-\tau_r\|_1=0$.
\end{proposition}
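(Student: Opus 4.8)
The plan is to compute the relevant partial traces $\Tr_{r-2}(\rho_r)$ explicitly and show they converge (in Schatten-$1$ norm, hence in any fixed norm on this finite-dimensional $4\times 4$ matrix space) to a matrix $\rho_\infty$ that provably does \emph{not} lie in $\mathcal{C}_2(\R^2)$, and in fact stays bounded away from it. Since $\Tr_{r-2}$ is linear, $\Tr_{r-2}(\rho_r)=\tfrac12\Tr_{r-2}((uu^*)^{\otimes r})+\tfrac12\Tr_{r-2}((\overline u\,\overline u^*)^{\otimes r})$, and $\Tr_{r-2}((uu^*)^{\otimes r})=(uu^*)^{\otimes 2}$ because $\|u\|_2=1$ forces $\Tr(uu^*)=1$ and the partial trace of a product tensor just kills the last registers. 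So in fact $\Tr_{r-2}(\rho_r)=\tfrac12(uu^*)^{\otimes 2}+\tfrac12(\overline u\,\overline u^*)^{\otimes 2}$ is \emph{independent of $r$}; call it $\rho_\infty$. (With $u=\tfrac1{\sqrt2}(1,\bfi)^T$ one has $a=\tfrac1{\sqrt2}(1,0)^T$, $b=\tfrac1{\sqrt2}(0,1)^T$, so $aa^T+bb^T=\tfrac12 I_2$, and by Lemma~\ref{lem:ppt-sym-is-real-rank-2} the maximal symmetrization of $\rho_\infty$ is $\PiMS(\tfrac14 I_2^{\otimes2})$, cf.\ \eqref{eqmaxsymI2}; but I will work directly with $\rho_\infty$ and with $\mathcal C_2(\R^2)$, which is defined without any symmetrization.) Thus the claim reduces to: $\lim_{r\to\infty}\|\rho_\infty-\tau_r\|_1=0$ is impossible for $\tau_r\in\mathcal C_2(\R^2)$, i.e.\ $\rho_\infty\notin\overline{\mathcal C_2(\R^2)}=\mathcal C_2(\R^2)$ (the set is compact, being the convex hull of the image of the compact sphere under a continuous map).

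The heart of the argument is therefore to exhibit a \emph{separating functional}: a real symmetric $4\times4$ matrix $W$, indexed by $[2]^2$, such that $\langle W, u^{\otimes2}(u^{\otimes2})^T\rangle\ge 0$ for every real unit vector $u\in\R^2$ — so that $\langle W,\tau\rangle\ge0$ for all $\tau\in\mathcal C_2(\R^2)$ — while $\langle W,\rho_\infty\rangle<0$. Equivalently (via the correspondence of Proposition~\ref{propsosP} / the polynomial dictionary), I seek a bivariate quartic form $w(x_1,x_2)$ that is nonnegative on $\oS^1$ with $L_{\rho_\infty}(w)<0$, where $L_{\rho_\infty}$ is the linear functional whose tensor-indexed moment data is $\rho_\infty$. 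Writing out $\rho_\infty$ in the basis ordering $11,22,12,21$: from $uu^*=\tfrac12\begin{pmatrix}1&-\bfi\\\bfi&1\end{pmatrix}$ one gets $(uu^*)^{\otimes2}$ and its conjugate, and averaging produces a real matrix whose entries are $\pm\tfrac14$; the associated functional assigns to the monomials $x_1^4, x_2^4, x_1^2x_2^2$ the values $\tfrac14,\tfrac14,-\tfrac14$ (matching moments of the state $\tfrac12\delta_u+\tfrac12\delta_{\bar u}$ on the \emph{complex} sphere). The natural candidate form is $w=(x_1^2+x_2^2)^2 = x_1^4+2x_1^2x_2^2+x_2^4$, which equals $1$ on $\oS^1$, hence is nonnegative there, yet $L_{\rho_\infty}(w)=\tfrac14+2\cdot(-\tfrac14)+\tfrac14=0$ — on the boundary, not strictly negative. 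So I will instead perturb: take $w=(x_1^2+x_2^2)^2 - \epsilon\,(\text{something nonnegative on }\oS^1\text{ but with negative }\rho_\infty\text{-moment})$, or more simply observe that $\rho_\infty$ fails the \emph{positive partial transpose} test while every $\tau\in\mathcal C_2(\R^2)$ of the form $u^{\otimes2}(u^{\otimes2})^T$ is invariant under partial transpose (as $u$ is real), so its partial transpose is PSD — hence any convex combination has PSD partial transpose. Concretely, the partial transpose of $\rho_\infty$ with respect to the first register has a negative eigenvalue (this is exactly the Peres criterion detecting that $\tfrac12(uu^*)^{\otimes2}+\tfrac12(\overline u\,\overline u^*)^{\otimes2}$ is an entangled complex state); taking $W$ to be the partial transpose of a rank-one PSD projector onto that negative eigenvector gives $\langle W,\tau_r\rangle\ge0$ and $\langle W,\rho_\infty\rangle<0$, with a gap $\delta:=-\langle W,\rho_\infty\rangle>0$ independent of $r$. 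Then $\|\rho_\infty-\tau_r\|_1\ge \langle W,\rho_\infty-\tau_r\rangle/\|W\|_\infty \ge \delta/\|W\|_\infty$ for all $r$, contradicting convergence to $0$.

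The steps, in order: (i) show $\Tr_{r-2}(\rho_r)=\rho_\infty:=\tfrac12(uu^*)^{\otimes2}+\tfrac12(\overline u\,\overline u^*)^{\otimes2}$ is independent of $r$, using $\|u\|_2=1$ and linearity of the partial trace; (ii) write $\rho_\infty$ explicitly as a real $4\times4$ matrix and verify it is $2$-doubly symmetric, PSD, trace $1$; (iii) compute its partial transpose $\widetilde{\rho_\infty}$ and exhibit a negative eigenvalue, with unit eigenvector $v$ — equivalently, check by hand that $\lambda_{\min}(\widetilde{\rho_\infty})<0$ for the explicit $4\times4$ matrix; (iv) set $W=\widetilde{vv^T}$ (partial transpose of $vv^T$) and note $\langle W,\tau\rangle=\langle vv^T,\widetilde\tau\rangle\ge0$ for all $\tau\in\mathcal C_2(\R^2)$ because partial transposes of the generators $u^{\otimes2}(u^{\otimes2})^T$ with $u$ real are PSD (indeed equal to $uu^T\otimes uu^T$ reshaped), and this passes to the convex hull; (v) compute $\delta=-\langle W,\rho_\infty\rangle=-\langle vv^T,\widetilde{\rho_\infty}\rangle=-\lambda_{\min}(\widetilde{\rho_\infty})>0$; (vi) conclude $\|\rho_\infty-\tau_r\|_1\ge\delta/\|W\|_\infty>0$ for all $r$, so no such sequence $(\tau_r)$ converges to $\rho_\infty$. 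The main obstacle is step (iii)–(v): one must correctly identify the partial-transpose structure (making sure "transpose on the first of the two tensor registers" is interpreted consistently with the index ordering $11,22,12,21$) and carry out the $4\times4$ eigenvalue computation cleanly enough to pin down $\delta$; everything else is bookkeeping. A secondary subtlety worth a sentence in the write-up is the distinction between $\mathcal C_2(\R^n)$ (no symmetrization) and $\MaxSym$ — the counterexample is about the former, which is the set appearing in a would-be real analog of Theorem~\ref{theoQdF}, and the PPT obstruction is precisely what separates real entanglement from separability here.
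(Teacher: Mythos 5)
Your step (i) and the compactness framing match the paper's argument. But the key step (iii)--(v) fails: $\rho_\infty := \tfrac12(uu^*)^{\otimes 2}+\tfrac12(\bar u\bar u^*)^{\otimes 2}$ does \emph{not} violate PPT. It is manifestly a convex combination of two complex product states $(uu^*)^{\otimes 2}$ and $(\bar u\bar u^*)^{\otimes 2}$, hence a separable state over $\C$, and every separable state has positive partial transpose. Your assertion that this is an ``entangled complex state'' detected by the Peres criterion is therefore wrong. One can check directly: in the index ordering $11,12,21,22$,
\begin{align*}
\rho_\infty = \tfrac14\begin{pmatrix} 1&0&0&-1\\ 0&1&1&0\\ 0&1&1&0\\ -1&0&0&1\end{pmatrix},
\qquad
\widetilde{\rho_\infty} = \tfrac14\begin{pmatrix} 1&0&0&1\\ 0&1&-1&0\\ 0&-1&1&0\\ 1&0&0&1\end{pmatrix},
\end{align*}
and $\widetilde{\rho_\infty}$ has eigenvalues $\tfrac12,\tfrac12,0,0$ --- all nonnegative. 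So there is no negative eigenvalue and no witness $W$ of the PPT type, and the inequality in step (vi) never gets off the ground.

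The correct obstruction, which the paper uses, is stronger than PPT: every $\tau\in\mathcal{C}_2(\R^n)$ is \emph{maximally symmetric} (since $\vvec(u^{\otimes 2}(u^{\otimes 2})^T)=u^{\otimes 4}$ is a symmetric $4$-tensor), whereas $\rho_\infty$ is not --- indeed $\widetilde{\rho_\infty}\ne\rho_\infty$, or more concretely $(\rho_\infty)_{11,22}=-\tfrac14 \ne \tfrac14 = (\rho_\infty)_{12,21}$, two entries that maximal symmetry would force to coincide. The paper then finishes softly via compactness of $\mathcal{C}_2(\R^n)$ and closedness of the maximally symmetric subspace, which suffices for the non-convergence claim. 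If you want a quantitative separating functional in the spirit of your step (iv), use a witness sensitive to maximal symmetry rather than to PPT: e.g.\ the linear map $W(X)=X_{11,22}-X_{12,21}$ vanishes on all maximally symmetric matrices (hence on all of $\mathcal{C}_2(\R^n)$) but equals $-\tfrac12$ on $\rho_\infty$, giving $\|\rho_\infty-\tau_r\|_1\ge \tfrac12/\|W\|_\infty$ uniformly in $r$. Also note a minor inconsistency in your moment computation: since $\rho_\infty$ is not maximally symmetric it does not define a unique linear functional $L_{\rho_\infty}$, so the ``value of $L_{\rho_\infty}$ on $x_1^2x_2^2$'' is ambiguous ($-\tfrac14$ from the $(11,22)$ entry but $+\tfrac14$ from the $(12,21)$ entry) --- this ambiguity is precisely the failure of maximal symmetry you should be exploiting.
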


\begin{proof}
	First, observe that $\Tr_{r-2}(\rho_r)=\rho_2$ (since $u$ is a unit vector).  Assume for contradiction that such $\tau_r\in \mathcal C_2(\R^n)$ would exist. As the set $\mathcal C_2(\R^n)$ is compact, the sequence $(\tau_r)_{r\ge 2}$ admits a converging subsequence, converging to $\tau\in \mathcal C_2(\R^n)$.  
	At the limit, we obtain that $\rho_2=\tau\in \mathcal C_2(\R^n)$. This yields a contradiction, since any element of $\mathcal C_2(\R^n)$ is maximally symmetric, while $\rho_2$ is not maximally symmetric \tcolred{since it is not real-valued}. 
\end{proof}

\subsection{A banded real QdF theorem}\label{sec:banded}

So, no analog of the QdF theorem exists, for approximating partial traces of real doubly symmetric matrices by elements of $\mathcal C_d(\R^n)$. 
On the other hand, the assumption of \Cref{theoDWr2}   requiring  a maximally symmetric matrix is very strong.  We now state an approximation result  that applies to real doubly symmetric matrices,  but reaches a weaker conclusion, namely approximation \tcolblue{of the maximal symmetrization of the partial trace of $M$} by a {\em scaled }element of $\mathcal C_d(\R^n)$  and a weaker approximation rate. So, the assumption of \Cref{thm:banded-qdf} is weaker
than in \Cref{theoDWr2}, but the approximation guarantee is also weaker. Hence, the two results are incomparable.

\begin{theorem}[Banded real QdF for real doubly symmetric matrices]\label{thm:banded-qdf} 
	Let $M$ be a real symmetric matrix indexed by $[n]^r$. Assume $M$ is doubly symmetric, positive semidefinite 
	and $\Tr(M)=1$. Then, for any integer $1\le d\le r$, there exist a scalar $\alpha \in [\alpha_d, 1]$ and an element $\tau\in \mathcal C_d(\R^n)$ such that 
	\begin{align}\label{eqbanded}
		\|\Pi_{\maxsym}(\tr_{r-d} M) - \alpha \tau\|_{1} \le \sqrt{N_{n,d}}{4d(n-1)\over r+1}.
	\end{align}
	Here, $ N_{n, d} = \binom{n+d-1}{d} $ is the dimension of $ S^d(\R^n) $ and $\alpha_d={2^d/{2d\choose d}}$.
\end{theorem}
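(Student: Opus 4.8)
The plan is to pass through the complex QdF theorem (Corollary~\ref{corQdF}) applied to the doubly symmetric PSD matrix $M$, and then correct the resulting complex approximant via the maximal symmetrization operator $\PiMS$. Concretely: by Corollary~\ref{corQdF}, there exists $\tau_{\C}\in\mathcal{C}_d(\C^n)$, i.e.\ $\tau_{\C}=\sum_k\mu_k\,u_k^{\ot d}(u_k^{\ot d})^*$ with $\mu_k\ge 0$, $\sum_k\mu_k=1$, $\|u_k\|_2=1$, such that $\|\Tr_{r-d}(M)-\tau_{\C}\|_1\le 4d(n-1)/(r+1)$. Since $M$ is real symmetric, $\Tr_{r-d}(M)$ is real symmetric, hence maximally symmetric? — no, only the full $M$ need not even be maximally symmetric; but $\Tr_{r-d}(M)$ is still a real doubly symmetric matrix. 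The key move is to apply $\PiMS$ to both sides. Because $\PiMS$ is an orthogonal projection (onto the subspace $\MaxSym_d(\C^n)$, and it is contractive in every Schatten norm since it is an average of the signed-permutation/partial-transpose operations, each of which is an isometry in Schatten norms), we get
\begin{align*}
\|\PiMS(\Tr_{r-d}(M)) - \PiMS(\tau_{\C})\|_1 \le \|\Tr_{r-d}(M)-\tau_{\C}\|_1 \le \frac{4d(n-1)}{r+1}.
\end{align*}
Here I must be careful: $\PiMS$ as defined acts on matrices indexed by $[n]^d$ via $\Pi_{2d}$ on $\vvec$, which is an average of $2d!$ permutation matrices $P_\sigma$ — these are unitary, so $\PiMS$ is a contraction in $\|\cdot\|_1$. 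That handles the error term.

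Next I would analyze $\PiMS(\tau_{\C})$. By linearity, $\PiMS(\tau_{\C})=\sum_k\mu_k\,\PiMS\big(u_k^{\ot d}(u_k^{\ot d})^*\big)=\sum_k\mu_k\,\PiMS\big((u_ku_k^*)^{\ot d}\big)$ (the latter equality because $u^{\ot d}(u^{\ot d})^*$ and $(uu^*)^{\ot d}$ differ by a reshuffle of registers, hence have the same maximal symmetrization). By Lemma~\ref{lem:ppt-sym-is-real-rank-2}, writing $u_k=a_k+\bfi b_k$, we have $\PiMS((u_ku_k^*)^{\ot d})=\PiMS((a_ka_k^T+b_kb_k^T)^{\ot d})$, which is real. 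Now I need to rewrite $(a_ka_k^T+b_kb_k^T)^{\ot d}$ as a combination of rank-one projections $vv^T$ with $\|v\|_2=1$: since $u_k$ is a unit vector in $\C^n$, $\|a_k\|_2^2+\|b_k\|_2^2=1$, so $a_ka_k^T+b_kb_k^T$ is a PSD real matrix of trace $1$ — but of rank up to $2$, so its $d$-th tensor power is a mixture (convex combination) of $(vv^T)^{\ot d}$'s with $\|v\|_2=1$ \emph{only up to a scalar}: expanding $(a a^T+bb^T)^{\ot d}$ and applying $\PiMS$, the diagonal-type terms $(vv^T)^{\ot d}$ already lie in $\cone\{(vv^T)^{\ot d}\}$, and the cross terms, after symmetrization, can be absorbed using an identity of the flavor of Example~\ref{ex:a-b-expansion} — this is exactly where the scalar $\alpha$ and the constant $\alpha_d=2^d/\binom{2d}{d}$ enter. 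The cleanest route: for a unit vector $u=a+\bfi b$, the quantity $\PiMS((aa^T+bb^T)^{\ot d})$ is, by an averaging/polarization argument over the phase $e^{\bfi\theta}u$, a positive \emph{scalar multiple} $\alpha_u\in[\alpha_d,1]$ of an element of $\mathcal{C}_d(\R^n)$; averaging over $k$ with weights $\mu_k$ then gives $\PiMS(\tau_{\C})=\alpha\,\tau$ with $\tau\in\mathcal{C}_d(\R^n)$ and $\alpha=\sum_k\mu_k\alpha_{u_k}\in[\alpha_d,1]$ (using convexity of $\mathcal{C}_d(\R^n)$ and that $\alpha$ is a convex combination of numbers in $[\alpha_d,1]$). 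The lower bound $\alpha_d=2^d/\binom{2d}{d}$ should come from the worst case $a\perp b$, $\|a\|=\|b\|=1/\sqrt2$, where $(aa^T+bb^T)^{\ot d}/2^d$ symmetrizes to a genuinely rank-$\binom{d+1}{1}$-ish object whose scalar defect is computed combinatorially (counting how symmetrization redistributes the $2^d$ monomials in $aa^T,bb^T$ among the $\binom{2d}{d}$-type orbits).

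Finally I would convert the $\|\cdot\|_1$ (Schatten) bound on $\PiMS(\Tr_{r-d}M)-\alpha\tau$ into the claimed bound, accounting for the factor $\sqrt{N_{n,d}}$. The point is that the displayed bound \eqref{eqbanded} is stated with the Schatten $1$-norm already, so the only remaining issue is: the error matrix $\PiMS(\Tr_{r-d}M)-\alpha\tau$ is maximally symmetric, hence lives in an $N_{n,d}$-dimensional space, so it has at most $N_{n,d}$ nonzero eigenvalues, and by \eqref{eqnorm12} we would only need $\sqrt{N_{n,d}}$ if we were passing through the Frobenius norm. Re-examining: we \emph{directly} got $\|\PiMS(\Tr_{r-d}M)-\PiMS(\tau_{\C})\|_1\le 4d(n-1)/(r+1)$ from contractivity, with no $\sqrt{N_{n,d}}$ — so the $\sqrt{N_{n,d}}$ factor must arise because contractivity of $\PiMS$ in Schatten-$1$ is \emph{not} actually available (partial transposes are not Schatten-$1$ contractions!), so one has to go through Frobenius: $\PiMS$ \emph{is} a contraction in $\|\cdot\|_2$ (it's an orthogonal projection), giving $\|\PiMS(\Tr_{r-d}M)-\PiMS(\tau_{\C})\|_2\le\|\Tr_{r-d}M-\tau_{\C}\|_2\le\|\Tr_{r-d}M-\tau_{\C}\|_1\le 4d(n-1)/(r+1)$, and then \eqref{eqnorm12} applied to the maximally symmetric (rank $\le N_{n,d}$) error matrix yields $\|\cdot\|_1\le\sqrt{N_{n,d}}\,\|\cdot\|_2\le\sqrt{N_{n,d}}\,\frac{4d(n-1)}{r+1}$.

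\textbf{The main obstacle} I expect is the precise bookkeeping of the scalar $\alpha$: showing that $\PiMS((aa^T+bb^T)^{\ot d})$ is \emph{exactly} a scalar multiple of an element of $\mathcal{C}_d(\R^n)$ (not just close to one), identifying that scalar as a convex combination that stays in $[\alpha_d,1]$, and nailing down $\alpha_d=2^d/\binom{2d}{d}$ as the extremal value. This requires understanding how $\Pi_{2d}$ acts on $\vvec$ of a rank-$2$ tensor power — essentially a representation-theoretic / explicit combinatorial computation in the spirit of Example~\ref{ex:a-b-expansion}, generalizing the $d=2$ coefficient $1/3$ (and indeed $2^2/\binom{4}{2}=4/6=2/3$, with the rank-one part carrying weight and the genuinely rank-$2$-obstruction part the defect). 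The contractivity subtlety (Schatten-$2$ vs Schatten-$1$) is the second thing to get right, but once flagged it is routine via \eqref{eqnorm12}.
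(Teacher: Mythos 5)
Your proposal follows essentially the same route as the paper's proof: apply the complex QdF (Corollary~\ref{corQdF}), project via $\PiMS$, bound the resulting error by passing through the Frobenius norm and using the bounded rank of a maximally symmetric matrix to recover a Schatten-1 bound with the factor $\sqrt{N_{n,d}}$ via \eqref{eqnorm12}, and then recognize $\PiMS(\tau_\C)$ as a scalar multiple of an element of $\mathcal{C}_d(\R^n)$. The ``main obstacle'' you flag (that $\PiMS((aa^T+bb^T)^{\ot d})$ is \emph{exactly} a scalar multiple of a $\mathcal{C}_d(\R^n)$ element with scalar in $[\alpha_d,1]$) is precisely Lemma~\ref{lemJL}, already established in the paper via Reznick's identity, and your self-correction that $\PiMS$ fails to be a Schatten-1 contraction --- necessitating the detour through $\|\cdot\|_2$ --- matches the paper's inequality chain exactly.
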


The following result is a crucial ingredient in the proof of 
\Cref{thm:banded-qdf}; \ATB{we extract it from the proof of \cite[Theorem 4.1]{Johnston_Lovitz_Vijayaraghavan_2023} and adapt it to our needs.}
\tcolblue{For clarity and completeness, we give a sketch of proof in Appendix \ref{appendix-proof-lemma}.}

\begin{lemma}[adapted from \cite{Johnston_Lovitz_Vijayaraghavan_2023}]\label{lemJL}
	Assume $\tau\in \mathcal C_d(\C^n)$ with $\Tr(\tau )=1$, and define 
	$\alpha_d={2^d/ {2d\choose d}}$, 	$\alpha= \Tr(\PiMS(\tau))$. Then,  ${1\over \alpha}\PiMS(\tau) \in \mathcal C_d(\R^n)$, and
	$\alpha_d \le \alpha\le 1.$
\end{lemma}

Lovitz and Johnson \cite{Johnston_Lovitz_Vijayaraghavan_2023} use this result (within the proof of their Theorem 4.1, \MoL{recalled later as Theorem \ref{theoJLV}}) to reduce the analysis 
of  the real polynomial optimization problem to its complex analog, for which there is a well-established spectral hierarchy based on the QdF theorem. We will use these same ingredients in a different way in order to show the above banded real QdF theorem (\Cref{thm:banded-qdf}), which is then used   in \Cref{secJLV}  to recover the analysis of the spectral bounds, \tcolblue{however with different more explicit constants and better dependency on the polynomial.}

\begin{proof}[Proof of Theorem \ref{thm:banded-qdf}]
	First, we use the quantum de Finetti theorem, in the form of \Cref{corQdF}, to pick $ \tau \in \mathcal{C}_d(\C^n) $ such that  
	\begin{align}\label{eqtau0}
		\|\tau - \tr_{r-d} M \|_{1} \le \frac{4d(n-1)}{r+1}. 
	\end{align}
	We claim: 
	\begin{align}\label{eqtau2}
		\|\PiMS(\tr_{r-d} M) - \PiMS(\tau)\|_{1} \le  \sqrt{N_{n,d}} \|\Tr_{r-d}(M) - \tau\|_{1}.
	\end{align}
	Indeed, for  the Hermitian matrix $A=\tr_{r-d} M -\tau$, we have
	$$ \|\PiMS(A)\|_1 \le \sqrt{N_{n,d}} \  \|\PiMS(A)\|_2 \le \sqrt{N_{n,d}} \  \|A\|_2 \le  \sqrt{N_{n,d}}\ \|A\|_1.$$
	The first inequality follows from relation (\ref{eqnorm12}) applied to the matrix $\PiMS(A)$, which has at most $N_{n,d}$ nonzero eigenvalues since it is maximally symmetric. The second inequality follows from the fact that the Schatten 2-norm coincides with the Frobenius norm and thus it can only decrease under projection on a subspace. The last inequality follows using relation (\ref{eqnorm12}) for $A$.

	Define the matrix $\tau':=\PiMS(\tau) $ and set $\alpha:=\Tr(\tau')$. Then, by Lemma \ref{lemJL}, $\tau'':={1\over \alpha} \tau'\in \mathcal C_d(\R^n)$ and $\alpha_d\le \alpha\le 1$. Combining (\ref{eqtau0}) and  (\ref{eqtau2}), we obtain
	\begin{align*}
		\|\PiMS(\tr_{r-d} M) -\alpha \tau'' \|_{1} \le \sqrt{N_{n,d}} {4d(n-1)\over r+1},
	\end{align*}
	which completes the proof.
\end{proof}

\begin{remark}\label{rem:bandsize-optimality}
	Note that in order to reach a good approximation of the matrix $ A:= \PiMS (\tr_{r-d} M) $ by $(\alpha,\tau)$  in \Cref{thm:banded-qdf}, the scalar $ \alpha $ must be very close to the trace of  $ A$. To see this, use the following inequality: 
	\begin{align*}
		\|A-\alpha\tau\|_{1} = \max_{\|B\|_{\infty} = 1} \langle A-\alpha\tau, B \rangle \ge \langle A-\alpha\tau, \pm I_n^{\otimes d} \rangle = |\Tr A-\alpha|.
	\end{align*}
	Hence, if $ \alpha$ and $ \tau $ satisfy relation (\ref{eqbanded}), then
	\begin{align*}
		|\Tr A - \alpha| \le \sqrt{N_{n,d}}{4d(n-1)\over r+1} \le \epsilon,
	\end{align*}
	where the last inequality holds for any $\epsilon>0$ and any $r$ large enough.
	By \Cref{thm:banded-qdf} there exists a scalar $ \alpha' $ satisfying $\alpha_d\le \alpha'\le 1$ and $|\Tr A-\alpha'|\le \epsilon$. This implies $\alpha_d - \varepsilon < \Tr A < 1 + \varepsilon $. In turn, it now follows that any scalar $ \alpha $ with $ \|A-\alpha\tau\|_{1} \le \sqrt{N_{n,d}}{4d(n-1)\over r+1}  $ must satisfy $ \alpha_d-2\varepsilon <  \alpha <  1+2\varepsilon $. This shows that  it is not possible to improve the approximation of $A$ in \Cref{thm:banded-qdf} by making the ``band'' larger.  
\end{remark}

\section{The spectral hierarchy of \johnstonlovitz{}}\label{secJLV}

\MoL{\johnstonlovitz{} \cite{Johnston_Lovitz_Vijayaraghavan_2023} proposed recently a hierarchy of spectral bounds $\spec_r(p)$ for the minimum value $p_{\min}$ of a homogeneous polynomial $p$ over the unit sphere,\footnote{ Their hierarchy  applies more generally to optimization of  a multivariate tensor over a spherical Segre-Veronese variety. We only focus here on their hierarchy for polynomials on the sphere.} 
see Section \ref{secJLV-intro} below for their definition and relationship to the  moment-sos bounds.  Throughout, let $p\in R_{2d}$, $s=x_1^2+\cdots +x_n^2$, and let 
\begin{align}\label{eqQpM}
	Q(p) = \maxsym(p), \quad  M = \maxsym(s^d) 
\end{align}
denote  the maximally symmetric  matrices representing the polynomials $ p $ and $s^d$, respectively (as in Definition \ref{defGramtensor}). They show the following convergence rate.
}

\MoL{\begin{theorem}\cite[Theorem 4.3]{Johnston_Lovitz_Vijayaraghavan_2023}\label{theoJLV}
Let $p\in R_{2d}$,  let $Q(p)$ (resp., $M$) be the maximally symmetric matrix representing $p$
(resp., $s^d$), and $\alpha_d=2^d/{2d\choose d}$. For $r\ge d$, 
\begin{align*}
p_{\min}-\spec_r(p) \le \|Q(p)\|_\infty \big(1+{\lambda_{\max}(M)\over \lambda_{\min}(M)}\big) {1\over \alpha_d} {4d(n-1)\over r+1},
\end{align*}
\end{theorem}}

\MoL{Several questions arise that we will address in this section.}

\noindent
\MoL{$\bullet$ Is it possible to show an improved rate, better than $O(1/r)$? We cannot answer this, but what we can do is show that one cannot hope for a better rate than $O(1/r^2)$. For this, we construct an instance whose rate is in $\Omega(1/r^2)$ (see Theorem \ref{thm:lower-bound-spectral}). 
}

\smallskip\noindent
\MoL{$\bullet$ 
The constant in the error estimate of Theorem \ref{theoJLV} involves the condition number $\kappa(M)={\lambda_{\max}(M)\over \lambda_{\min}(M)}$  whose exact value is not known (see \cite{Johnston_Lovitz_Vijayaraghavan_2023} and Remark \ref{remcompare} below).
Is it possible to have another constant with explicit dependency  in terms of $n,d$? This is what we offer in Theorem \ref{thm:analysis-spectral-hierarchy}, where we replace $\kappa(M)$ by $N_{n,d}={n+d-1\choose d}$. For this, we use the banded real QdF (Theorem  \ref{thm:banded-qdf}) instead of the QdF (Theorem \ref{theoQdF}) used by Lovitz and Johnston \cite{Johnston_Lovitz_Vijayaraghavan_2023} for showing Theorem~\ref{theoJLV}. 
}

\smallskip\noindent
\MoL{$\bullet$ Can one show an error estimate in terms of the range of values $p_{\max}-p_{\min}$ taken by  $p$ over the unit sphere? Such estimate would be in line with the analysis of the moment-sos bounds (in Theorem \ref{theoFF}) and fit with what is customary in the optimization literature. We offer such an analysis in Theorem \ref{thm:pmax-pmin-spectral}. 
}

\smallskip\noindent
\MoL{$\bullet$ Does the spectral hierarchy have generic finite convergence, as is the case for   the moment-sos bounds? We give a negative answer (for degree 4) in Theorem \ref{thm:no-finite-convergence-n}.
}

\subsection{The spectral bounds}\label{secJLV-intro}

We introduce the spectral hierarchy of \johnstonlovitz{} \cite{Johnston_Lovitz_Vijayaraghavan_2023}. 
Let $p\in R_{2d}$ and recall the matrices $Q(p)$   and $M$ from (\ref{eqQpM}). For an integer $ r\ge d $, define the matrices 
\begin{align}\label{eqQrpM}
	Q_r(p) = \Pi_{r}(Q(p) \otimes I_n^{\otimes (r-d)})\Pi_{r}, \\
	M_r= \Pi_{r}(M \otimes I_n^{\otimes (r-d)})\Pi_{r}. 
\end{align}
So,  $ Q_r(p) $ and $M_r$ are   $ r $-doubly symmetric matrices    representing  $ s^{r-d}p $ and $s^r$, respectively. That is, $\langle Q_r(p), x^{\ot r}(x^{\ot r})^T\rangle = s^{r-d}p$ and $
\langle M_r, x^{\ot r}(x^{\ot r})^T\rangle= s^r$ hold.
Then, the  spectral bound  $\spec_r(p)$ at the $ r $-th level is defined as the smallest generalized eigenvalue of the matrix $ Q_r(p) $ with respect to $ M_r $. We gather some equivalent formulations for $\spec_r(p)$. 

\begin{definition}
	The spectral bound $\spec_r(p)$  is defined by any of the following equivalent programs:
	\begin{align}
		\spec_r(p)& =\max_{\nu\in\R} \{ \nu: Q_r(p)-\nu M_r\succeq 0\},\label{eqnu1} \\
		& = \min_{v\in S^r(\R^n) } \{v^T Q_r(p)v: v^T M_rv=1\}, \label{eqnu2}\\
		& = \min_{X } \{\langle Q_r(p), X\rangle: X \  r\text{-doubly symmetric},\ X\succeq 0,\ \langle M_r,X\rangle =1\}.\label{eqnu3}
	\end{align}
\end{definition}

Lovitz and Johnston  \cite{Johnston_Lovitz_Vijayaraghavan_2023} show that the parameters $\spec_r(p)$ provide lower bounds for $p_{\min}$ that are at most as strong as the sos parameters $\sos_r(p)$ from (\ref{eqsosr}). 

\begin{proposition}\label{prop:jlv-relaxes-pmin} \cite{Johnston_Lovitz_Vijayaraghavan_2023}
We have $ \spec_r(p)\le \spec_{r+1}(p)$ and $\spec_r(p)\le \sos_r(p)\le p_{\min}$ for $r\ge d$ \tcolblue{and $p\in R_{2d}$.}
\end{proposition}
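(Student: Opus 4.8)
The plan is to prove the three claimed inequalities separately, working from the SDP/moment reformulations already established. For the monotonicity $\spec_r(p)\le\spec_{r+1}(p)$, I would use formulation \eqref{eqnu3}: given an optimal $r$-doubly symmetric $X\succeq0$ with $\langle M_r,X\rangle=1$ achieving $\spec_r(p)$, I want to lift it to a feasible point at level $r+1$. The natural candidate is $X':=\Pi_{r+1}\bigl(X\otimes \tfrac1n I_n\bigr)\Pi_{r+1}$ (or $X\otimes I_n$ suitably renormalized and symmetrized), which is $(r{+}1)$-doubly symmetric and positive semidefinite. One then checks $\langle Q_{r+1}(p),X'\rangle = \langle Q_r(p),X\rangle$ and $\langle M_{r+1},X'\rangle=\langle M_r,X\rangle$ by expanding the definitions in \eqref{eqQrpM}: since $Q_{r+1}(p)=\Pi_{r+1}(Q(p)\otimes I_n^{\otimes(r+1-d)})\Pi_{r+1}$ represents $s^{r+1-d}p$, pairing against $X\otimes I_n$ introduces exactly the extra factor of $s$ on the polynomial side, i.e.\ $\langle Q_{r+1}(p), X\otimes I_n\rangle$ equals $L_X(s^{r+1-d}p)$ where $L_X$ is the functional with moment matrix $X$, and the same for $M_{r+1}$; the normalization by $n$ compensates $\langle I_n,I_n\rangle=n$ so that the trace-type constraint is preserved. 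Hence $X'$ is feasible at level $r+1$ with the same objective value, giving $\spec_{r+1}(p)\le\spec_r(p)$ — wait, the inequality goes the other way: a feasible lift only shows $\spec_{r+1}(p)$ is an infimum over a set containing (the image of) level-$r$ solutions, so I must instead argue that \emph{every} level-$(r{+}1)$ feasible point restricts to a level-$r$ feasible point, via a partial trace. So the cleaner route is: given $X$ feasible at level $r+1$, set $\widehat X := c\,\Pi_r(\Tr_1(X))\Pi_r$ for the appropriate constant $c$, check it is $r$-doubly symmetric, PSD, satisfies $\langle M_r,\widehat X\rangle=1$ after rescaling, and has $\langle Q_r(p),\widehat X\rangle = \langle Q_{r+1}(p),X\rangle$; this shows $\spec_r(p)\le\spec_{r+1}(p)$.

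For $\spec_r(p)\le\sos_r(p)$, I would compare \eqref{eqnu3} with \eqref{eqsosr-M}. The SOS parameter $\sos_r(p)$ minimizes $\langle Q(p)\otimes I_n^{\otimes(r-d)},M\rangle$ over \emph{maximally symmetric} PSD matrices $M$ with $\Tr(M)=1$, whereas $\spec_r(p)$ minimizes (essentially) the same objective over the larger class of \emph{doubly symmetric} PSD matrices. Concretely, any maximally symmetric $M$ with $\Tr M=1$ is in particular doubly symmetric, and one checks that $\langle Q(p)\otimes I_n^{\otimes(r-d)},M\rangle = \langle Q_r(p), M\rangle$ and $\langle M_r, M\rangle = \langle \maxsym(s^d)\otimes I_n^{\otimes(r-d)}, M\rangle = L(s^r) = \Tr M = 1$ when $M$ is the moment matrix of a functional $L$ — here using Lemma~\ref{lem:moment-matrix-equiv-maxsym} and Lemma~\ref{lem:partial-trace-is-restriction} to identify $\langle M_r,M\rangle$ with $L(s^r)$. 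So every feasible point for \eqref{eqsosr-M} yields a feasible point for \eqref{eqnu3} with the same objective, whence $\spec_r(p)\le\sos_r(p)$. The identity $\langle Q_r(p),M\rangle = \langle Q(p)\otimes I_n^{\otimes(r-d)},M\rangle$ for doubly symmetric $M$ follows since $\Pi_r M\Pi_r = M$ and $\Pi_r$ is self-adjoint, so the projections in the definition \eqref{eqQrpM} of $Q_r(p)$ can be absorbed.

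For $\sos_r(p)\le p_{\min}$, this is the standard soundness of the Lasserre hierarchy: take any $x^*\in\oS^{n-1}$ with $p(x^*)=p_{\min}$; then $M := (x^*(x^*)^T)^{\otimes d}$ is a maximally symmetric PSD matrix with $\Tr M = \|x^*\|_2^{2d}=1$, and $\langle Q(p)\otimes I_n^{\otimes(r-d)}, M\rangle$ evaluates, by the representing-matrix identity, to $s(x^*)^{r-d}p(x^*) = p_{\min}$. Plugging this $M$ into \eqref{eqsosr-M} shows $\sos_r(p)\le p_{\min}$. Alternatively one uses \eqref{eqsosr-alternative}: if $s^{r-d}p-\lambda s^r$ is sos then evaluating at $x^*$ gives $p_{\min}-\lambda\ge0$.

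The main obstacle I anticipate is purely bookkeeping: getting the normalization constants right in the lifting/restriction argument for monotonicity (the factors of $n$ coming from $\langle I_n,I_n\rangle=n$ and from how $\Pi_r$ interacts with tensoring by $I_n$), and carefully verifying that the partial trace of a doubly symmetric matrix, after re-symmetrizing with $\Pi_r$, is again doubly symmetric and still represents $s^{r-d}p$. None of the steps are deep — the content is that the spectral hierarchy is just the SOS/moment hierarchy with the moment matrix constrained to be doubly symmetric rather than maximally symmetric, plus the elementary observation that maximally symmetric implies doubly symmetric — but the tensor-index manipulations need to be spelled out with care.
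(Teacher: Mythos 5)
Your proposal is correct, and the monotonicity argument ends up in the same place as the paper's: after discarding the (rightly abandoned) lifting idea, you take the partial trace $\Tr_1$ of a feasible level-$(r+1)$ solution and verify feasibility at level $r$ via the identities $\langle M_r,\Tr_1(Y)\rangle=\langle M_{r+1},Y\rangle$ and $\langle Q_r(p),\Tr_1(Y)\rangle=\langle Q_{r+1}(p),Y\rangle$, exactly as in the paper. The only cosmetic difference is that your re-symmetrization $\Pi_r(\cdot)\Pi_r$ and normalization constant $c$ are unnecessary: $\Tr_1(Y)$ of an $(r+1)$-doubly symmetric $Y$ is already $r$-doubly symmetric, and the constraint value is preserved exactly, so $c=1$.

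Where you genuinely diverge is in the proof of $\spec_r(p)\le\sos_r(p)$. The paper argues on the primal (sos) side: the matrix $Q_r(p)-\spec_r(p)M_r$ is positive semidefinite by \eqref{eqnu1} and represents $s^{r-d}p-\spec_r(p)s^r$, so that polynomial is a sum of squares and \eqref{eqsosr-alternative} gives the inequality in one line. You instead argue on the dual (moment) side: every feasible point of \eqref{eqsosr-M} (maximally symmetric, PSD, trace one) is a feasible point of \eqref{eqnu3} (doubly symmetric, PSD, $\langle M_r,\cdot\rangle=1$) with the same objective, so the spectral minimum is taken over a larger set. Both arguments are sound; the paper's is shorter, while yours makes transparent the structural point you state at the end — that the spectral hierarchy is the moment hierarchy with the maximal-symmetry constraint relaxed to double symmetry — which is arguably the more illuminating way to see why the inequality holds. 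Your treatment of $\sos_r(p)\le p_{\min}$ (evaluate at a minimizer, or plug the Dirac moment matrix into \eqref{eqsosr-M}) is the standard soundness argument, which the paper omits as known.
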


\tcolblue{For completeness and clarity, we give the proof in Appendix \ref{appendix-proof-proposition}.}
\tcolblue{To ease comparison between the spectral and sos bounds,} we   recall some analogous equivalent formulations of  the parameters $ \sos_{r}(p) $.

\begin{lemma}\label{lemsosmoment}
	The sos bound $\sos_r(p)$ can be equivalently defined by any of the following programs:
	\begin{align}
		&\sos_{r}(p)   = \max\{\lambda: \lambda\in \R,\ s^{r-d}p-\lambda s^{r} \text{ is sos}\}, \label{eqsosr-alternative}\\
		& = \min\{L(ps^{r-d}): L\in R_{2r}^*,\ L(s^r)=1, \ L\ge 0 \text{ on } \Sigma_r\},\label{eqsosr-L}\\
		&= \min \{\langle Q(p)\otimes I_n^{(r-d)}, A\rangle: A\in \MaxSym_r(\R^n),  A\succeq 0,  \Tr(A)=1\}.\label{eqsosr-M}
	\end{align}
\end{lemma}

\begin{proof}
For the equivalence between (\ref{eqsosr}) and (\ref{eqsosr-alternative}), see, e.g.,  \cite{deKlerk_Laurent_Parrilo_2005}.
The equivalence of (\ref{eqsosr-alternative}) and (\ref{eqsosr-L}) follows from conic duality, and use Proposition \ref{propsosP} for the equivalence between (\ref{eqsosr-L}) and  (\ref{eqsosr-M}).
\end{proof}

\subsection{Convergence analysis of the spectral bounds}\label{sec:convergence-analysis-spectral}
\raggedbottom

\MoL{Lovitz and Johnston \cite{Johnston_Lovitz_Vijayaraghavan_2023} show their convergence analysis in $O(1/r)$ 
(recall Theorem \ref{theoJLV}) using the QdF theorem (Theorem \ref{theoQdF}). In this section, we use our new  banded real QdF theorem (Theorem \ref{thm:banded-qdf}) to give an alternative proof of this fact. First, in Theorem \ref{thm:analysis-spectral-hierarchy}, we replace the (unknown) constant $ \lambda_{\max}(M)/\lambda_{\min}(M)$ appearing in the $O(\cdot)$ notation, by the explicit constant $N_{n,d}$. The dependence on $p$ in both results is via the eigenvalues of the representation matrix $Q(p)$ (via $\|Q(p)\|_\infty$ in Theorem \ref{theoJLV} and via the parameter $\gamma(Q(p))$ from (\ref{eqgamma}) in Theorem \ref{thm:analysis-spectral-hierarchy}). Second, in Theorem \ref{thm:pmax-pmin-spectral}, we replace  this dependence on the spectrum of $Q(p)$  by the value  range $p_{\max}-p_{\min}$ of $p$.}

\begin{theorem}
\label{thm:analysis-spectral-hierarchy}
	Let $p\in R_{2d}$. 	For any $r\ge d$, we have
	\begin{align*}
		p_{\min} - \spec_r(p) \le  \gamma(Q(p)) \cdot {\sqrt{N_{n,d}}\over \alpha_d} \cdot  \frac{4d(n-1) }{r+1}, 
	\end{align*}
	where $ \alpha_d = {2^d/ {2d\choose d}}$ is the constant from \Cref{thm:banded-qdf}, $ N_{n,d} = \binom{n+d-1}{d} $, and 
	\begin{align}\label{eqgamma}
		\gamma(Q(p)) = \max\{\lambda_{\max}(Q(p)) - \spec_d(p), p_{\min} - \lambda_{\min}(Q(p))\}.
	\end{align}
\end{theorem}

\begin{proof} 
	To simplify notation, set $Q:=Q(p)$.
	Let $ v\in S^r(\R^n) $ be an optimal solution of the program (\ref{eqnu2}).	
	The  following identity (taken from \cite{Johnston_Lovitz_Vijayaraghavan_2023}) holds, useful for later use:
	\begin{align}\label{eq:eigenvector-calculation}
		0 &= \langle Q_r(p) - \spec_{r} M_r, vv^{T}  \rangle =  \langle (Q - \spec_r M) \otimes I_n^{\otimes r-d},  vv^{T}  \rangle \\
		&= \langle Q - \spec_{r} M, \tr_{r-d} (vv^{T})  \rangle \nonumber\\
		&= \langle \Pi_{\maxsym}(Q - \spec_{r} M), \tr_{r-d} vv^{T}  \rangle \nonumber\\
		&= \langle Q - \spec_{r} M, \Pi_{\maxsym}(\tr_{r-d} vv^{T}) \rangle. \nonumber
	\end{align}
	Here, we use the fact that $vv^T$ is doubly $r$-symmetric in the first line (to eliminate the projections $\Pi_r$ in $Q_r(p) - \spec_{r} M_r$), the definition of the partial trace in the second line and, in the  third line,  the fact that $ Q-\spec_{r}M $ is maximally symmetric. 
	
	By \Cref{thm:banded-qdf}, there exist $ \tau\in \mathcal{C}_d(\R^n) $ and a scalar $ \alpha \in [\alpha_d,2] $ such that 
	\begin{align}\label{eqeval1}
		\| \tau-\alpha^{-1}\Pi_{\maxsym}(\tr_{r-d} vv^{T})\|_{1} \le {{\sqrt{N_{n,d}}} \over \alpha_d}{4d(n-1)\over r+1}.
	\end{align}

	By definition, $p_{\min} =\min_{\rho\in \mathcal C_d(\R^n)}\langle Q(p),\rho\rangle \le \langle Q(p), \tau\rangle.$
	We proceed to evaluate the range $p_{\min}-\spec_r(p)$, setting $\spec_r=\spec_r(p)$ for simpler notation:
	\begin{align}\label{eq:jlv-estimate-chain}
	\begin{split}
		p_{\min} - \spec_r 
		&\leq \langle Q, \tau \rangle - \spec_r\\
		&= \langle Q - \spec_r M, \tau \rangle \\ 
		&= \langle Q - \spec_r M, \tau - \alpha^{-1}\Pi_{\maxsym}(\tr_{r-d} vv^{T}) \rangle \\ 
		&= \langle Q - \spec_r I_n^{\otimes d}, \tau - \alpha^{-1}\Pi_{\maxsym}(\tr_{r-d} vv^{T}) \rangle \\ 
		&\leq \|Q - \spec_r I_n^{\otimes d}\|_{\infty} \cdot \|\tau - \alpha^{-1}\Pi_{\maxsym}(\tr_{r-d} vv^{T})\|_{1}  
	\end{split}
	\end{align}
	The first equality goes as follows:
	$\langle M,\tau\rangle=\langle \MaxSym(s^d),\tau\rangle =\langle I_n^{\ot d},\tau\rangle =1$, since $\tau $ is maximally symmetric with trace 1. For the second equality, use (\ref{eq:eigenvector-calculation}) to see that we added a zero term. For the third equality, use again that $\tau$ is maximally symmetric, so  we can replace $Q-\spec_r M$ by $Q-\spec_r I_n^{\ot d}$. Finally, the last inequality uses H\"older's inequality (for the Schatten 1-norm and $\infty$-norm).

	In view of (\ref{eqeval1}), there remains only to show 
	\begin{align}\label{eqQinfty}
		\|Q - \spec_r I_n^{\otimes d}\|_{\infty}\le \gamma(Q).
	\end{align}
	By definition, $\|Q-\spec_r I_n^{\ot d}\|_\infty =  \max\{\lambda_{\max}(Q)-\spec_r, \spec_r-\lambda_{\min}(Q)\}$. Note that
	$$  \spec_d \le \spec_r \le p_{\min} \le p_{\max} \le \lambda_{\max}(Q). $$ 
	\MoL{For the right most inequality, say $p_{\max}=p(x)$ with $x\in\mathbb S^{n-1}$; then,  $p_{\max}=p(x)= \langle Q, (xx^T)^{\ot d}\rangle  \le \lambda_{\max}(Q)$.} Hence, $\lambda_{\max}(Q)-\spec_r\le \lambda_{\max}(Q)-\spec_d$ and $\spec_r-\lambda_{\min}(Q) \le p_{\min}-\lambda_{\min}(Q)$, which shows (\ref{eqQinfty}), as desired.
\end{proof}

\begin{remark}\label{remcompare}
In comparison to the factor $\gamma(Q(p)) \cdot {\sqrt{N_{n,d}}\over \alpha_d}$ appearing in Theorem~\ref{thm:analysis-spectral-hierarchy}, the corresponding error term in Theorem \ref{theoJLV} is $\|Q(p)\|_\infty {1 +\kappa(M)\over \alpha_d}$, where $\kappa(M)={\lambda_{\max}(M)\over \lambda_{\min}(M)}$ and $M=\MaxSym(s^d)$. Compared to $ N_{n,d}^{1/2}= {n+d-1\choose d}^{1/2}$, the constant $\kappa(M)$ is not known explicitly. Lovitz and Johnston \cite{Johnston_Lovitz_Vijayaraghavan_2023} observe that numerics suggest $\kappa(M)={n/2 + d-1 \choose \lfloor d/2\rfloor}$, which, if true, would imply that both $\sqrt{N_{n,d}}$ and $\kappa(M)$ are in the same order $O(n^{d/2})$ for fixed $d$. 
\end{remark}

\MoL{We next revisit the proof of Theorem~\ref{thm:analysis-spectral-hierarchy}  and replace   the dependency on $p$ by the value range $ p_{\max}-p_{\min} $. Such a dependency is indeed preferable compared to the parameters $\|Q(p)\|_{\infty}$ and $\gamma(Q(p))$, which do not behave well under shifting $p$ by a scalar multiple of $s^d$ (as mentioned in Appendix \ref{appendix-bad-gamma}).}

\begin{theorem}\label{thm:pmax-pmin-spectral}
	There exist   constants $ c_{n, d}>0$, only depending on $ n $ and $ d $, \MoL{and $r_0\in \oN$}  such that 
	\begin{align}\label{eqpmaxpmin}
		 p_{\min}- \spec_r(p) \le  (p_{\max} - p_{\min}) \cdot\frac{c_{n, d} }{r} \ \ \MoL{\text{ for all } r\ge r_0.}
	\end{align}
\end{theorem}

\begin{proof}
\MoL{We begin with observing that both $p_{\min}-\spec_r(p)$ and $p_{\max}-p_{\min}$ remain invariant under shifting $p$ by a scalar multiple of $s^d$. Hence, the inequality (\ref{eqpmaxpmin}) holds for $p$ if and only if it holds for $p+a s^d$ for any scalar $a\in\oR$.
So, we may assume 
\begin{align}\label{eqharmp}
p\in \bigoplus_{k=1}^d s^{d-k}H_{2k}.
\end{align}
 Indeed, if $p=cs^d+\sum_{k=1}^d s^{d-k}p_{2k}$ with $p_{2k}\in H_{2k}$ and $c\in \oR$, is the harmonic decomposition of $p$ (as in (\ref{eqdecharmonic-homogeneous-with-s})), then we may replace $p$ by $p-cs^d$. Hence, we have 
 $$p_{\min}<0<p_{\max}.$$ 
 Indeed, in view of (\ref{eqharmp}), 
 $p$ is orthogonal to the constant polynomial in $H_0$. That is, $\int_{\mathbb S^{n-1}} p(x)d\mu(x)=0$, where $\mu$ is the Haar measure on the sphere (recall Remark \ref{remharmonic}).
Since  $\lambda_{\min}(Q(p))\le p_{\min}<0<p_{\max} \le \lambda_{\max}(Q(p))$, this implies 
\begin{align*}
\|Q(p)\|_{\infty} \le \lambda_{\max}(Q(p))-\lambda_{\min}(Q(p)).
\end{align*}
Next, we show that the spectral range of $Q(p)$ can be bounded in terms of the value range of $p$: there exists a constant $c'_{n,d}$ (depending only on $n$ and $d$) such that 
\begin{align}
\label{eqrangeQq}
\lambda_{\max}(Q(q))-\lambda_{\min}(Q(q))\le c'_{n,d} (q_{\max}-q_{\min} )
\end{align}
for all $q\in \bigoplus_{k=1}^d s^{d-k}$. For this, 	consider the function 
	\begin{align*}
		\Phi\colon \bigoplus_{k=1}^d H_{2k}  \to \R,\  q \mapsto \frac{\lambda_{\max}(Q(q)) - \lambda_{\min} (Q(q))}{q_{\max} - q_{\min}}.
	\end{align*}
	Observe that $\Phi$ is well-defined, since its domain does not contain constant functions, and we have $ \Phi\ge 1 $. 
	Since $ \Phi $ is continuous and homogeneous of degree zero, it attains its  maximum value, denoted $ c_{n, d}' $, and the inequality (\ref{eqrangeQq}) follows. We can now conclude the proof.}
	
\MoL{	For this, we use   the last line of  \Cref{eq:jlv-estimate-chain}. So,   it suffices to upper bound each of the two terms 
$\|\tau - \alpha^{-1}\Pi_{\maxsym}(\tr_{r-d} vv^{T})\|_{1}$ and $\|Q-\spec_r I_n^{\ot d}\|_\infty$ (with $Q=Q(p)$).
The first term involving the $1$-norm can be upper bounded  by $ {{\sqrt{N_{n,d}}} \over \alpha_d}\cdot {4d(n-1)\over r+1} $ as in (\ref{eqeval1}). For the second term involving the $\infty$-norm, we use the triangle inequality combined with the fact that $\|I\|_{\infty}=1$ and $|\spec_r|\le |\spec_d|$ (since $\spec_d\le \spec_r\le p_{\min}<0$), to obtain $$\|Q-\spec_r I\|_{\infty} \le \|Q\|_{\infty} +|\spec_d|.$$
Observe that ${M\over \lambda_{\min}(M)} \succeq I$ and $I\succeq -{Q\over \|Q\|_\infty}$ 	(since
$\|Q\|_{\infty}\ge -\lambda_{\min}(Q)$). This implies $Q+ {\|Q\|_{\infty}\over \lambda_{\min}(M)} M\succeq  0$, and thus $|\spec_d|\le {\|Q\|_{\infty}\over \lambda_{\min}(M)}$ by the definition of the parameter $\spec_d=\spec_d(p)$. Putting things together and combining with (\ref{eqrangeQq}), we obtain
$$\|Q-\spec_r I\|_{\infty} \le \|Q\|_{\infty}\big(1+ {1\over \lambda_{\min}(M)}\big) \le c'_{n,d} \big(1+{1\over \lambda_{\min}(M)}\big) (p_{\max}-p_{\min}),$$
giving
$p_{\min}-\spec_r(p) \le c_{n,d}(p_{\max}-p_{\min})$,  with
$c_{n,d}= c'_{n,d} \big(1+{1\over \lambda_{\min}(M)}\big) {\sqrt{N_{n,d}}\over \alpha_d} 4d(n-1).$}
\end{proof}

\subsection{A complexity lower bound for the spectral hierarchy}\label{secChoiLam}

As seen earlier, one can show a convergence rate in $O(1/r)$ for  the spectral hierarchy $\spec_r(p)$.  Whether one can show a sharper analysis in $O(1/r^k)$ for some $k>1$ is not known. In this section we show that the best one can hope for is $k=2$. 
For this, we are going to construct an explicit form $ p $ of degree $ 4 $ in five  variables for which $ \spec_r(p) = \Omega(1/r^2) $. 

\medskip
We start by picking a  quartic form  in four variables that is nonnegative but not a sum of squares.
A concrete example is given by the \emph{Choi-Lam form} $ p_{CL} $, 
\begin{align}\label{eqpCL}
	p_{CL}(x) = x_1^2x_2^2+x_1^2x_3^2+ x_2^2x_3^2 + x_4^4 -4 x_1x_2x_3x_4.
\end{align}
For simpler notation,    denote by  $ \spec_r = \spec_r(p_{CL})$   the spectral  bounds, 
as defined in (\ref{eqnu1}), and   by $ \sos_{r}=\sos_r(p_{CL}) $  the moment-sos lower bounds, as in \eqref{eqsosr}. 
Then, $\spec_r\le \sos_r\le (p_{CL})_{\min }=0$ for any integer $r\ge 2$.

The Choi-Lam form $ p_{CL}$ is famously known to be a nonnegative polynomial, which is not a sum of squares (see \cite{Choi_Lam_1977}). Hence, {\em no} matrix representation of $ p_{CL} $ is positive semidefinite.
This property is stronger than necessary. As it will turn out, our strategy for proving the lower bound $\Omega(1/r^2)$ for  the spectral hierarchy works with any nonnegative quartic form whose  maximally symmetric representation is not positive semidefinite. 

We use one small trick, which is to choose an unnatural representation of   $ p_{CL} $. While $ p_{CL} $ is intrinsically a polynomial in four variables $x=(x_1,x_2,x_3,x_4)$, one can artificially view it as a polynomial $p$ in five variables $ \underline{x} := (x_1,x_2,x_3,x_4,u) $, where $u$ is an additional variable, by setting $p(\underline x)=p_{CL}(x)$.
If, as a 4-variate polynomial,  $ p_{CL} $ is represented by a matrix $ Q^{(4)} \in \R^{[4]^2\times [4]^2} $, then, as a 5-variate polynomial in   $ \underline{x} = (x,u) $, it is represented by the matrix 
\begin{align}\label{eqmatQ}
	\begin{pmatrix}
		Q^{(4)} & 0\\
		0^{T} & 0
	\end{pmatrix} \in  \R^{[5]^2\times [5]^2}.
\end{align}
Note that the same holds   for the maximally symmetric representations of $p_{CL}$  in four variables and of $p $ as a polynomial in five variables: 
\begin{align}\label{eqmatMQ}
	\maxsym(p)  = \begin{pmatrix}
		\maxsym(p_{CL}) & 0\\
		0^{T} & 0
	\end{pmatrix}.
\end{align}
The reason is again that $ p $ is only supported on monomials, which do not depend on the fifth variable $ u $. 

\begin{theorem}\label{thm:lower-bound-spectral}
	Let $\spec_r(p)$ denote the spectral bound of order $r$ for the Choi-Lam form $p \in \R[x_1,x_2,x_3,x_4,u]$. Then, we have	$p_{\min}=0$ and $\spec_r(p)=\Omega(1/r^2)$.
\end{theorem}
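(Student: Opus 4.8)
�The plan is to establish a lower bound $p_{\min}-\spec_r(p) = \spec_r(p) = \Omega(1/r^2)$ (noting $\spec_r(p)\le 0$, so we actually want $-\spec_r(p)=\Omega(1/r^2)$, i.e.\ $\spec_r(p) \le -c/r^2$ for some constant $c>0$). The strategy is to exhibit, for each $r$, an explicit feasible solution $v\in S^r(\R^5)$ to the minimization formulation (\ref{eqnu2}) of $\spec_r(p)$ whose objective value is at most $-c/r^2$. The obvious candidate is built from a vector that exploits the extra fifth variable $u$: since $p$ does not involve $u$, we have a lot of freedom to ``dilute'' a carefully chosen low-order tensor by tensoring with copies of $e_5$ (or more cleverly, with copies of a vector in the $u$-direction) and then symmetrizing via $\Pi_r$.

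First I would recall that $\maxsym(p)$ is not positive semidefinite (this is exactly the Choi--Lam property, transported to five variables via (\ref{eqmatMQ})): there is a unit tensor $w\in S^2(\R^5)$ with $w^T \maxsym(p) w =: -\delta < 0$. Next I would look for a feasible $v\in S^r(\R^5)$ for (\ref{eqnu2}) of the form $v = \Pi_r(w' \otimes g^{\otimes(r-2)})$ (suitably normalized so $v^TM_rv=1$), where $g$ is a unit vector chosen to make $v^T Q_r(p) v$ small while keeping $v^T M_r v$ bounded below. The key computation is to expand $v^T Q_r(p) v = \langle Q_r(p), vv^T\rangle = \langle Q(p)\otimes I_5^{\otimes(r-2)}, \Pi_r(w'\otimes g^{\otimes(r-2)})(\cdots)\rangle$ using the explicit form of $\Pi_r$ in (\ref{eqPir}): the symmetrization mixes the $w'$-register with the $g$-registers, and the ``diagonal'' term (identity permutation on the two $w'$-coordinates) contributes $\sim \langle Q(p), w'(w')^T\rangle = -\delta$ times a combinatorial weight, while the ``off-diagonal'' cross terms contribute an error. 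One expects the cross-terms, when $g$ is chosen orthogonal to the span of $w'$ in an appropriate sense (here using the $u$-direction, since $p$ ignores $u$), to be smaller by a factor $O(1/r)$ rather than $O(1/r^2)$; so a single-layer construction gives only $O(1/r)$. To squeeze out $\Omega(1/r^2)$ one needs to be more careful: I would use a two-parameter family $v = \Pi_r\big( a\cdot (w'\otimes e_5^{\otimes(r-2)}) + b\cdot e_5^{\otimes r}\big)$ or, better, directly analyze the $2\times 2$ (or $3\times 3$) block of the pencil $(Q_r(p), M_r)$ spanned by the symmetrizations of $w'\otimes e_5^{\otimes(r-2)}$, $s\cdot e_5^{\otimes(r-2)}$-type tensors, etc., and show the smallest generalized eigenvalue of that small block is $\le -c/r^2$.

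The precise mechanism I would aim for: by the $O(n)$-equivariance / representation-theoretic structure used throughout the paper (harmonic decomposition (\ref{eqdecharmonic})), the pencil $(Q_r(p), M_r)$ restricted to the relevant isotypic component behaves like a small Jacobi/tridiagonal matrix, and the negative eigenvalue coming from the non-PSD-ness of $\maxsym(p)$ ``leaks'' into $\spec_r$ at rate $1/r^2$ — analogous to how extremal roots of orthogonal polynomials on an interval approach the endpoint at rate $1/r^2$, which is precisely the phenomenon behind the $O(1/r^2)$ analyses cited in the introduction (de Klerk--Laurent). Concretely, I would fix the harmonic component $p' = p - c s^2$ of $p$, note $\maxsym(p')$ still has a negative eigenvalue, and compute $\spec_r(p)$ on the two-dimensional space spanned by $\Pi_r(\maxsym$-eigenvector of that component $\otimes\, e_5^{\otimes(r-2)})$ and a ``filler'' tensor; evaluating $\langle Q_r(p),\cdot\rangle$ and $\langle M_r,\cdot\rangle$ on this space reduces to two or three scalar quantities that are explicit ratios of binomial coefficients in $r$, and the determinant condition for the pencil to have a nonpositive eigenvalue becomes an inequality that holds with slack $\Theta(1/r^2)$.

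The main obstacle I anticipate is the combinatorial bookkeeping in expanding $\Pi_r(w\otimes g^{\otimes(r-2)})$ against $Q(p)\otimes I_5^{\otimes(r-2)}$ and $\maxsym(s^d)\otimes I_5^{\otimes(r-2)}$: one must separate the contribution of permutations in $\MFS_r$ by how they act on the two ``special'' registers carrying $w$, and count, for each case, how many of the remaining indices are forced to match between the two sides — this is where the $1/r$ versus $1/r^2$ distinction is decided, so it needs to be done carefully rather than with crude bounds. A secondary obstacle is verifying that the candidate $v$ genuinely lies in $S^r(\R^n)$ and has $v^TM_r v$ bounded away from $0$ (not decaying), so that the objective value, not just a ratio, is $\Theta(1/r^2)$; this should follow since $M_r$ represents $s^r$ which equals $1$ on the sphere and $v$ is a genuine symmetrization of a product of unit vectors, but the normalization constant must be tracked.
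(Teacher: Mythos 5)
Your overall strategy---exploit the dummy variable $u$, build a witness from a direction $w'$ on which $\maxsym(p_{CL})$ is negative, tensor with $e_5^{\otimes(r-2)}$, symmetrize, and let the combinatorics of $\Pi_r$ produce the $1/r^2$---is exactly the mechanism of the paper's proof, viewed from the primal side of the pencil formulation: instead of plugging witness vectors into (\ref{eqnu2}), the paper extracts the principal submatrix of the positive semidefinite matrix $\Pi_r((Q+\lambda_{\max}(M)|\spec_r|I_5^{\otimes 2})\otimes I_5^{\otimes(r-2)})\Pi_r$ indexed by tuples $(i_1,i_2,5,\dots,5)$ with $i_1,i_2\in[4]$, which amounts to testing the quadratic form precisely on your vectors $\Pi_r(w\otimes e_5^{\otimes(r-2)})$ with $w$ supported on $[4]^2$.

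The gap is that you stop short of the decisive computation and, in guessing its outcome, reach a wrong conclusion that derails you. You assert that the cross terms in $v^TQ_r(p)v$ are only a factor $O(1/r)$ smaller than the main term, so that ``a single-layer construction gives only $O(1/r)$,'' and you then pivot to two-parameter families, small pencil blocks and a Jacobi-matrix/orthogonal-polynomial heuristic that you never carry out. In fact the single vector $v=\Pi_r(w'\otimes e_5^{\otimes(r-2)})$ already suffices, because the cross terms are not merely small but \emph{identically zero}: every entry of $Q:=\maxsym(p)$ having at least one index equal to $5$ vanishes (no monomial of $p$ contains $u$), and $w'$ is supported on $[4]^2$. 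Hence in $\langle Q\otimes I_5^{\otimes(r-2)}, vv^T\rangle$ only those permutations survive, on each side, that keep the two $w'$-registers in the first two slots; counting them gives $v^TQ_r(p)v=\binom{r}{2}^{-2}\,(w')^TQ^{(4)}w'<0$ exactly, where $Q^{(4)}:=\maxsym(p_{CL})$. By contrast, for $v^TM_rv$ the factor $I_5^{\otimes(r-2)}$ (up to the constants $\lambda_{\min}(M),\lambda_{\max}(M)$) only requires the row and column tuples to agree, so $(r-2)!$ times more permutation pairs survive and $v^TM_rv\asymp\binom{r}{2}^{-1}$. The ratio is therefore $-\Theta(1/r^2)$, and feasibility in (\ref{eqnu2}) after normalization yields $\spec_r(p)\le -c/r^2$. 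This permutation count---$\binom{r}{2}^{-2}$ against $\binom{r}{2}^{-1}$---is exactly the bookkeeping you flag as the open obstacle; without it your proposal does not establish the claim, and with it none of the extra machinery is needed. You should also record the two easy facts you use implicitly: $p_{\min}=0$ (e.g.\ $p_{CL}(1,1,1,1)=0$), and the existence of $w'$ with $(w')^TQ^{(4)}w'<0$, which holds because $p_{CL}$ is not a sum of squares and hence $\maxsym(p_{CL})$ is not positive semidefinite.
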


\begin{proof}
	Let the variables be ordered as $ \underline{x} = (x,u)$, where $ x = (x_1,\ldots,x_4) $ and  $ u $ is the fifth variable. Let $r\ge 2$. Set $ s = x_1^2  + x_2^2+x_3^2 + x_4^2 + u^2 $ and 
	\begin{align}\label{eqmats}
	\begin{split}
	 & Q :=   \maxsym(p),\quad  M := \maxsym(s^2), \\
	 &  Q_r := \Pi_r (Q\otimes I_5^{\otimes (r-2)}) \Pi_r,\quad  M_r :=  \Pi_r (M\otimes I_5^{\otimes (r-2)}) \Pi_r.
	\end{split}
	\end{align} 
	So, $Q$ has the block-form as in (\ref{eqmatQ}), (\ref{eqmatMQ}), where $Q^{(4)} :=  \MaxSym(p_{CL}) \in \R^{[4]^2 \times [4]^2} $ and the remaining blocks are filled by zeros. As in (\ref{eqPir}),  $\Pi_2\in \R^{[5]^2\times [5]^2}$ denotes the projection onto the symmetric subspace $S^2(\R^5)$.	The space $\R^{[4]^2}$ can be seen as a subspace of $\R^{[5]^2}$ corresponding to the first four variable indices. Let us write $ \Pi_2^{(4)} \in \R^{[4]^2 \times [4]^2} $ for the restriction of $ \Pi_2$ to the subspace $ \R^{[4]^2} $. 

	Recall  $\spec_r=\spec_r(p)$ for short. By the definition, we have $\spec_r\le p_{\min}=0$ and 
	\begin{align*}
		Q_r + |\spec_r| M_r = \Pi_r ((Q+ |\spec_r|M) \otimes I_5^{\otimes (r-2)}) \Pi_r \succeq 0.
	\end{align*}
	By using  $ M\preceq \lambda_{\max}(M) I_5^{\otimes 2} $, we obtain that 
	\begin{align*}
		G_r :=  \Pi_r ((Q+ \lambda_{\max}(M)|\spec_r|I_5^{\otimes 2}) \otimes I_5^{\otimes (r-2)}) \Pi_r \succeq 0.
	\end{align*}
	In particular, every principal submatrix of $ G_r $ is positive semidefinite. We will now examine \tcolblue{in detail} the following principal submatrix 
	\begin{align}\label{eqmatH}
		H:= ((G_r)_{(i_1,i_2,5,\ldots,5), (j_1,j_2,5,\ldots,5)})_{i_1,i_2,j_1,j_2 \in \{1,\ldots,4\}} \in \R^{[4]^2\times [4]^2}
	\end{align}
	of $ G_r $, where the first two row indices $ i_1, i_2 $ and the first two column indices $ j_1,j_2 $ each range from $ 1 $ to $ 4 $, while the other $ r-2 $ row and column indices are set equal to $ 5 $.

	Recall the notation $ \underline{i}^{\sigma} := (i_{\sigma(1)},\ldots,i_{\sigma(r)}) $ for any $\sigma\in \MFS_r$. For $\ui,\uj\in [5]^r$, using (\ref{eqPir}), we may expand 
	\begin{align*}
		(G_r)_{\underline{i},\underline{j}} = \left(\frac{1}{r!}\right)^2 \sum_{\sigma \in \mathfrak{S}_{r}}\sum_{\tau \in \mathfrak{S}_{r}}  (Q \otimes I_5^{\otimes (r-2)} + \lambda_{\max}(M)|\spec_r|I_5^{\otimes r})_{\underline{i}^{\sigma}, \underline{j}^{\tau}}.
	\end{align*}
	In order to compute the submatrix $ H $, we set the last $ r-2 $ indices of both $ \underline{i} $ and $ \underline{j} $ equal to $ 5 $ and the other two indices not equal to $ 5 $. Hence, on the right hand side, we have to count how many permutations ``survive'' when we do this, in the sense of giving a nonzero contribution to $ H $. 
	We may write 
	\begin{align}\label{eqmatH2}
	 H = A + \lambda_{\max}(M)|\spec_r|B,
	 \end{align}
	  where $ A $ is the contribution arising from $ Q \otimes I_5^{\otimes (r-2)} $ and $ B $ is the contribution from $ I_5^{\otimes r}  $. Since $ I_5^{\otimes r}  $ is diagonal, the only nonzero entries of $ B $ are at indices of the form $ (i_1i_2,i_1i_2) $ and $ (i_1i_2,i_2i_1) $. 
	For later use, notice that $ (\Pi_{2})_{i_1i_2,i_1i_2} = \frac{1}{2} $ if $ i_1\ne i_2 \in [4]$, and $(\Pi_{2})_{i_1i_1,i_1i_1} = 1 $ for $i_1\in [4]$. 
	We now calculate the matrix $B$: for $i_1,i_2\in [4]$, 
	\begin{align*}
		B_{i_1i_2,i_1i_2} &= B_{i_1i_2,i_2i_1} = \frac{1}{(r!)^2} \sum_{\sigma \in \mathfrak{S}_{r}}\sum_{\tau \in \mathfrak{S}_{r}}  (I_5^{\otimes r})_{(i_{1},i_{2},5,\ldots,5)^{\sigma}, (i_{1},i_{2},5,\ldots,5)^{\tau}}\\
		&= 2(\Pi_{2})_{i_1i_2,i_1i_2} \frac{(r-2)!}{(r!)^2} \sum_{\sigma \in \mathfrak{S}_{r}}  (I_5^{\otimes r})_{(i_{1},i_{2},5,\ldots,5)^{\sigma}, (i_{1},i_{2},5,\ldots,5)^{\sigma}}\\
		&= 2(\Pi_{2})_{i_1i_2,i_1i_2}\frac{(r-2)!}{r!} =  \binom{r}{2}^{-1} (\Pi_2)_{i_1i_2,i_1i_2}. 
	\end{align*}	
	Here, in the second line, we   used the fact that $ I_n^{\otimes r} $ is diagonal and that the row and column indices are equal if and only if $ (i_{\sigma(1)},i_{\sigma(2)}) = (i_{\tau(1)},i_{\tau(2)}) $. 
	Given any permutation $ \sigma \in \mathfrak{S}_r$, there are $ (r-2)! $ permutations $ \tau \in \mathfrak{S}_r $ with this property if $ i_1\ne i_2 $, and $ 2(r-2)! $ permutations $ \tau $ with this property if $ i_1 = i_2 $. Hence, there are $ 2(r-2)!(\Pi_{2})_{i_1i_2,i_1i_2} $ permutations $ \tau $ with this property for any given $ \sigma $. 
	In the third line, we used the fact that all diagonal entries of $ I_n^{\otimes r} $ are equal to $ 1 $, so the sum evaluates to $ r! $.  
	Summarized, we conclude that $ B = \binom{r}{2}^{-1} \Pi_2^{(4)} $. 
	Now, let us calculate $ A $: for $i_1,i_2,j_1,j_2\in [4]$,
	\begin{align*}
		A_{i_1i_2,j_1j_2} = \frac{1}{(r!)^2} \sum_{\sigma \in \mathfrak{S}_{r}}\sum_{\tau \in \mathfrak{S}_{r}}  (Q\otimes I_5^{\otimes r-2})_{(i_{1},i_{2},5,\ldots,5)^{\sigma}, (j_{1},j_{2},5,\ldots,5)^{\tau}}.
	\end{align*}
	Recall that $ Q_{i,5} = 0 $ if $ i\in \{1,\ldots,4\} $. Therefore, the only nonzero contributions come from permutations $ \sigma $ and $ \tau $, which keep all 5-indices `to the right', i.e., at the last $r-2$ positions. There are $ 2(r-2)! $ permutations $ \sigma $ and $ 2(r-2)! $ permutations $ \tau $ with this property. They may be written as $ \sigma = \sigma_2\circ \sigma_{r-2} $ and $ \tau =  \tau_2\circ \tau_{r-2}$, respectively, with $ \sigma_2, \tau_2 \in \mathfrak{S}_2 $ and $ \sigma_{r-2}, \tau_{r-2} \in \mathfrak{S}_{r-2} $.
	So, we obtain  
	\begin{align*}
		A_{i_1i_2,j_1j_2} &= \frac{((r-2)!)^2}{(r!)^2} \sum_{\sigma_2, \tau_2 \in \mathfrak{S}_{2}} (Q\otimes I_n^{\otimes r-2})_{(i_{\sigma_2(1)},i_{\sigma_2(2)},5,\ldots,5), (j_{\tau_2(1)},j_{\tau_2(2)},5,\ldots,5)}.\\
		&= \left(\frac{(r-2)!}{r!}\right)^2 \sum_{\sigma_2, \tau_2 \in \mathfrak{S}_{2}} Q_{i_{\sigma_2(1)}i_{\sigma_2(2)}, j_{\tau_2(1)}j_{\tau_2(2)}}.\\
		&= \binom{r}{2}^{-2} Q_{i_1i_2,j_1j_2}.
	\end{align*}
	In the first line, we used the fact that the $ (r-2)!^2 $ permutations of the 5-indices all yield the same indices. In the second line, we used that $ (I_n^{\otimes r-2})_{5\ldots5,5\ldots5} = 1 $. In the third line, we used that $ Q $ is maximally symmetric, hence in particular doubly symmetric. Thus, the $ 2^2 = 4 $ permutations of the non-5 indices all yield the same entry. Summarizing, we have shown  
	$$ A = \binom{r}{2}^{-2}Q^{(4)},\quad  B = \binom{r}{2}^{-1}\Pi_{2}^{(4)}.
	$$
	 We conclude that the   matrix $ H $ from (\ref{eqmatH})-(\ref{eqmatH2}) can be expressed as
	\begin{align*}
		\binom{r}{2}^{2}  H = Q^{(4)} + \lambda_{\max}(M)|\spec_r|\binom{r}{2}\Pi_{2}^{(4)}. 
	\end{align*}
	Now, we use that $H\succeq 0$ and $\begin{pmatrix} \Pi_2^{(4)} & 0 \\ 0^{T} & 0 \end{pmatrix} \preceq I_5^{\otimes 2} \preceq \frac{1}{\lambda_{\min}(M)} M $ to obtain  
	\begin{align*}
		0\preceq \binom{r}{2}^{2} \begin{pmatrix} H & 0 \\ 0^{T} & 0 \end{pmatrix}  \preceq Q + |\spec_r|\frac{\lambda_{\max}(M)}{\lambda_{\min}(M)} \binom{r}{2} M. 
	\end{align*}
	By definition, $ |\spec_2| $ is the smallest value such that 
	\begin{align*}
		Q + |\spec_2| M \succeq 0. 
	\end{align*}
	Hence, it follows that 
	\begin{align*}
		 |\spec_r|\frac{\lambda_{\max}(M)}{\lambda_{\min}(M)} \binom{r}{2} \ge |\spec_2|, \:\: \text{ and thus }\:\: 	|\spec_r| \ge \frac{\lambda_{\min}(M)}{\lambda_{\max}(M)} |\spec_2|\binom{r}{2}^{-1} = \Omega\left(\frac{1}{r^2}\right),		 
	\end{align*}
\tcolblue{using the fact that $M\succ 0$ \cite{Johnston_Lovitz_Vijayaraghavan_2023} and $|\spec_2|>0$ as $p$ is not sos.} 
\end{proof}

As an application, we also obtain a complexity lower bound for the approximation result in Theorem \ref{thm:banded-qdf}.

\begin{corollary}[Complexity lower bound for the banded real QdF]
	There exists a unit vector $ v\in S^r(\R^n) $   such that, for all scalars $ \alpha \in \R $ and  $ \tau \in \mathcal{C}_d(\R^n) $,   one has
	\begin{align}\label{eq:banded-qdf-lower-bound}
		\|\alpha \tau - \Pi_{\maxsym}(\tr_{r-2} vv^{T})\|_{1} = \Omega(1/r^2).
	\end{align}
	In other words, the convergence rate in \Cref{thm:banded-qdf} is lower bounded by $ \Omega(1/r^2) $.
\end{corollary}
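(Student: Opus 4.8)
The plan is to combine \Cref{thm:lower-bound-spectral} with the mechanism used in the proof of \Cref{thm:analysis-spectral-hierarchy}, run in reverse. Recall that in that proof, the only place where the banded real QdF theorem entered was in bounding the quantity $\|\tau - \alpha^{-1}\Pi_{\maxsym}(\tr_{r-d} vv^T)\|_1$, where $v$ is an optimal eigenvector of the spectral program \eqref{eqnu2} and $\tau,\alpha$ are the objects produced by \Cref{thm:banded-qdf}. If this approximation error were $o(1/r^2)$ for \emph{every} choice of $\alpha$ and $\tau\in\mathcal C_d(\R^n)$, then rerunning the chain of estimates \eqref{eq:jlv-estimate-chain}, but now with $\widehat Q$ being the \emph{block-diagonal} representing matrix of the Choi--Lam form in five variables as in \eqref{eqmatMQ}, would give $p_{\min}-\spec_r(p)=o(1/r^2)$, contradicting \Cref{thm:lower-bound-spectral}.

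First I would fix $n=5$, $d=2$, and $p$ the five-variable Choi--Lam form. For each $r\ge 2$, let $v=v_r\in S^r(\R^5)$ be an optimal solution of the generalized eigenvalue program \eqref{eqnu2} for $\spec_r(p)$; this is the unit vector that will witness \eqref{eq:banded-qdf-lower-bound}. Next I would reproduce the identity \eqref{eq:eigenvector-calculation} and the estimate chain \eqref{eq:jlv-estimate-chain}, but using the specific block-diagonal representing matrix $\widehat Q=\maxsym(p)$, obtaining
\begin{align*}
	p_{\min}-\spec_r(p)\le \|\widehat Q-\spec_r(p) I_5^{\otimes 2}\|_\infty\cdot \|\alpha\tau-\Pi_{\maxsym}(\tr_{r-2} vv^T)\|_1
\end{align*}
for any scalar $\alpha$ and any $\tau\in\mathcal C_d(\R^5)$ (here I absorb the $\alpha^{-1}$ into a relabelling, writing $\alpha\tau$; since $\alpha$ and $\tau$ are free, this is harmless). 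The operator-norm factor $\|\widehat Q-\spec_r(p) I_5^{\otimes 2}\|_\infty$ is bounded by a constant independent of $r$, since $\spec_r(p)\to p_{\min}=0$ and $\widehat Q$ is fixed. Combining with $p_{\min}-\spec_r(p)=\Omega(1/r^2)$ from \Cref{thm:lower-bound-spectral}, it follows that $\|\alpha\tau-\Pi_{\maxsym}(\tr_{r-2} vv^T)\|_1=\Omega(1/r^2)$ for all $\alpha,\tau$, which is exactly \eqref{eq:banded-qdf-lower-bound}.

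The main subtlety to get right is the bookkeeping with the scalar $\alpha$: in \eqref{eq:jlv-estimate-chain} the relevant quantity is $\|\tau - \alpha^{-1}\Pi_{\maxsym}(\tr_{r-d} vv^T)\|_1$ with a \emph{specific} $\alpha\in[\alpha_d,1]$, whereas the corollary claims a lower bound over \emph{all} $\alpha\in\R$. This is resolved by noting that the estimate chain only uses $\langle Q-\spec_r M,\tau\rangle - 0$ and the fact that $\langle \widehat Q-\spec_r I_n^{\otimes d}, \alpha^{-1}\Pi_{\maxsym}(\tr_{r-d} vv^T)\rangle$ equals $\alpha^{-1}\spec_r$ times a harmless factor coming from \eqref{eq:eigenvector-calculation}; more cleanly, one simply observes that for \emph{any} $\alpha\in\R$ and $\tau\in\mathcal C_d(\R^5)$ the same Hölder bound $p_{\min}-\langle \widehat Q,\tau\rangle\le \ldots$ combined with $p_{\min}\le\langle \widehat Q,\tau\rangle$ (valid since $\tau\in\mathcal C_d(\R^5)$ is a convex combination of $u^{\otimes d}(u^{\otimes d})^T$ with $\|u\|_2=1$, and these give values $\ge p_{\min}$) forces the error to be $\Omega(1/r^2)$. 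I expect this reconciliation of the free parameter $\alpha$ with the constrained $\alpha$ of \Cref{thm:banded-qdf}, together with correctly tracking which representing matrix is used, to be the only genuine obstacle; everything else is a direct transcription of the argument in \Cref{thm:analysis-spectral-hierarchy}.
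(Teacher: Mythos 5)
Your overall strategy is the same as the paper's: take $v=v_r$ to be an optimal eigenvector of the spectral program \eqref{eqnu2} for the $5$-variate Choi--Lam form, rerun the chain \eqref{eq:jlv-estimate-chain}, and invoke \Cref{thm:lower-bound-spectral}. However, there is a genuine gap in your treatment of the scalar $\alpha$, and your displayed inequality is not what the chain actually yields. The chain gives
\begin{align*}
p_{\min}-\spec_r(p)\ \le\ \|\widehat Q-\spec_r(p) I_5^{\otimes 2}\|_\infty\cdot \bigl\|\tau-\alpha^{-1}\Pi_{\maxsym}(\tr_{r-2} vv^{T})\bigr\|_{1}
= \|\widehat Q-\spec_r(p) I_5^{\otimes 2}\|_\infty\cdot |\alpha|^{-1}\,\bigl\|\alpha\tau-\Pi_{\maxsym}(\tr_{r-2} vv^{T})\bigr\|_{1},
\end{align*}
so the "relabelling" does not absorb the factor $|\alpha|^{-1}$; what you actually obtain is $\|\alpha\tau-\Pi_{\maxsym}(\tr_{r-2}vv^{T})\|_{1}\ge |\alpha|\cdot\Omega(1/r^2)/C$. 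This is vacuous when $|\alpha|$ is small (e.g.\ $\alpha=0$ or $\alpha=1/r$), and your "more cleanly, one simply observes..." paragraph does not close this case: the intermediate claim that $\langle \widehat Q-\spec_r I_n^{\otimes d}, \alpha^{-1}\Pi_{\maxsym}(\tr_{r-d}vv^{T})\rangle$ equals "$\alpha^{-1}\spec_r$ times a harmless factor" is also off --- by \eqref{eq:eigenvector-calculation} that inner product is exactly zero.

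The missing ingredient is the paper's \Cref{rem:bandsize-optimality}. Testing against $\pm I_n^{\otimes d}$ gives $\|\alpha\tau - A\|_1\ge |\alpha-\Tr A|$ for $A=\Pi_{\maxsym}(\tr_{r-d}vv^{T})$, and the (upper-bound) banded QdF theorem forces $\Tr A$ to lie within $o(1)$ of $[\alpha_d,1]$; hence for $\alpha$ outside a fixed band such as $[\tfrac12\alpha_d,2]$ the quantity $\|\alpha\tau-A\|_1$ is bounded below by a positive \emph{constant}, and the claim holds trivially there. For $\alpha$ inside that band, $|\alpha|^{-1}\le 2/\alpha_d$ is a constant and your Hölder argument then goes through exactly as you intend. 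With that case distinction added, your proof coincides with the paper's.
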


\begin{proof}
	We may assume that $ \frac{1}{2}\alpha_d \le \alpha \le 2$.
	For, if not, then \Cref{rem:bandsize-optimality} shows that  $\|\alpha \tau - \PiMS(\Tr_{r-d}(vv^T))\|_1 $ is lower bounded by a constant and thus, \eqref{eq:banded-qdf-lower-bound} holds trivially. 
	Hence, $ \alpha $ is bounded by constants, which only depend on $ d $. 
	Consider the Choi-Lam form in (\ref{eqpCL}), viewed as a 5-variate polynomial $p$ (so $n=5$, $d=2$), and let $v\in S^2(\R^5)$ be an optimal solution to the program (\ref{eqnu2}) defining the spectral bound $\spec_r(p)$.
	Consider relation (\ref{eq:jlv-estimate-chain}) applied to it, whose  last line reads
	\begin{align*}
	|\spec_r|=p_{\min}-\spec_r & \le \|Q(p)-\spec_r I_5^{\ot d}\|_\infty \cdot
	\|\tau - \alpha^{-1} \PiMS(\Tr _{r-2}(vv^T))\|_1 \\
	& \le \gamma(Q(p)) \cdot  |\alpha^{-1}| \cdot  \|\alpha \tau -   \PiMS(\Tr _{r-2}(vv^T))\|_1,
	\end{align*}
	using (\ref{eqQinfty}) for   the last inequality. By Theorem \ref{thm:lower-bound-spectral}, we know that $|\spec_r|=\Omega(1/r^2)$, which  concludes the proof.
\end{proof}

\subsection{\MoL{The spectral hierarchy does not have generic finite convergence}}
\label{sec:no-finite-convergence}

A nice property of the moment-sos hierarchy is that it has {\em generic} finite convergence, as shown by Huang \cite{Huang2023} (and by Nie \cite{Nie_2014} for generic semialgebraic sets).
That is, the property: $\sos_r(p)=p_{\min}$ for some $r\ge d$, holds for ``almost all" polynomials $p$. More precisely, given $n,d\in \oN$, the set of   polynomials $p\in \oR[x_1,\ldots,x_n]_{2d}$ for which this property holds is Zariski-open.

Recall that the Zariski topology for a finite-dimensional vector space $\oR^N$ has as closed sets the sets $\{y\in\oR^N: F_1(y)=\cdots=F_m(y)=0\}$ for some polynomials $F_1,\ldots,F_m\in \oR[y_1,\ldots,y_N]$ and $m\in \oN$. We will need the well-known fact that each nonempty Zariski-open set is open and dense with respect to both the Euclidean and the Zariski topology.  

On the other hand, we now show that the spectral hierarchy $\spec_r(p)$ does not have  generic finite convergence. For this, consider the set 
\begin{align}\label{eqAnd}
A_{n,d}=\{p\in R_{2d}: \spec_r(p)<p_{\min}\ \text{ for all } r\ge d\}
\end{align}
consisting of the $n$-variate degree $2d$ forms for which the spectral hierarchy {\em does not have}  finite convergence. So, showing generic finite convergence amounts to showing that the set $A_{n,d}$ is Zariski-closed. We will use later the following easy fact: for any $p\in R_{2d} $ and $\lambda\in \oR$, $p\in A_{n,d}$ if and only if $p+\lambda s^d\in A_{n,d}$.

We will show that in the case $d=2$ the set $A_{n,d}$ is not Zariski-closed, by exhibiting a Euclidean ball that is contained in it. We will show the result first for the case $n=2$ (Theorem \ref{thm:no-finite-convergence}) and then extend it to $n\ge 2$ (Theorem \ref{thm:no-finite-convergence-n}). 
For this, we first establish a necessary condition for finite convergence, that depends only on the maximally symmetric matrix $Q(p)$ representing  $p$ (and holds for any $n,d$).

\begin{proposition}\label{prop:finite-conv-implies-eigenvalue}
	Let $ p\in R_{2d}$. Assume that $ \spec_{r}(p) = p_{\min}$ for some $ r\ge d$. Then, for each minimizer $ a\in \mathbb S^{n-1} $ of $ p $, we have $(Q(p)-p_{\min}Q(s^d))a^{\ot d}=0$.
\end{proposition}

\begin{proof}
	Assume $ \spec_{r}(p) = p_{\min}$ for some $r\ge d$. Then, $ Q_r(p) - p_{\min}M_r\succeq 0 $ holds. For each minimizer $a\in \mathbb S^{n-1}$, we have
	\begin{align*}
		(a^{\otimes r})^{T} (Q_r(p) - p_{\min}M_r) a^{\otimes r} = p(a) - p_{\min} = 0.
	\end{align*}  
	Since $ Q_r(p) - p_{\min}M_r $ is positive semidefinite, it follows that $ a^{\otimes r} $ lies in the kernel of $ Q_r(p) - p_{\min}M_r $. From the definition of $ Q_r(p) $, we conclude that 
	\begin{align*}
		0 = \Pi_r ((Q(p) - p_{\min} Q(s^d)) \otimes I_n^{\otimes r-d}) a^{\otimes r} =
		 \Pi_r ((Q(p) - p_{\min} Q(s^d)) a^{\otimes d}  \otimes a^{\otimes r-d}). 
	\end{align*}
	Setting $v=(Q(p) - p_{\min} Q(s^d)) a^{\otimes d}$, we have 
	$\Pi_r(v\ot a^{\ot r-d})=0$. As we now observe, this implies $v=0$. Indeed, for any $x\in \oR^n$, we have 
	$$0=\langle \Pi_r(v\ot a^{\ot r-d}),x^{\ot r}\rangle=\langle v\ot a^{\ot r-d}, x^{\ot r}\rangle=
	 \langle v,x^{\ot d}\rangle \langle a,x\rangle ^{r-d}.$$
	Hence, $\langle v,x^{\ot d}\rangle =0$ for any $x$ that is not orthogonal to $a$ and thus for all $x\in \oR^n$ (by continuity). Since $v$ is a symmetric tensor, this implies in turn that $v=0$, which concludes the proof.	
\end{proof}

This motivates introducing the following sets
\begin{align*}
& S_{n,d}=\{p\in R_{2d}: p_{\min}=0\},\\
& T_{n,d}=\{p\in S_{n,d}: a^{\ot d}\in \ker Q(p)\ \text{ for some } a\in \mathbb S^{n-1} \text{ such that } p(a)=0\}.\end{align*}
By Proposition \ref{prop:finite-conv-implies-eigenvalue}, if the spectral hierarchy has finite convergence for $p\in S_{n,d}$, then $p\in T_{n,d}$. In fact, $p$ belongs  to the smaller set, where one would require $a^{\ot d}\in \ker Q(p)$ for {\em all} zeros $a\in\mathbb S^{n-1}$ of $p$. However, with the chosen definition   of the set $T_{n,d}$,  one can  show that it is a closed set, a crucial property used later to show Theorem \ref{thm:no-finite-convergence}.

\begin{lemma}\label{lem:S-closed}
	The set  $ T_{n,d}$ is closed with respect to  the Euclidean topology on $R_{2d}$. 
\end{lemma}

\begin{proof}
Consider a sequence of polynomials $p_k\in T_{n,d}$ that has a limit $p\in R_{2d}$. As each $p_k$ is nonnegative it follows that $p$ is nonnegative. For each $k$ there exists  $a_k\in \mathbb S^{n-1}$ such that $p_k(a_k)=0$ and $Q(p_k)a_k^{\ot d}=0$. By compactness of the sphere, the sequence $(a_k)_k$ has an accumulation point $a\in \mathbb S^{n-1}$; replacing the sequence by a subsequence we may assume $a$ is the limit of $(a_k)_k$. Then, $0=p_k(a_k)$ converges to $p(a)$, showing $p(a)=0$ and thus $p\in S_{n,d}$. As the matrices $Q(p_k)$ depend linearly on the coefficients of the polynomials $p_k$, it follows that $0=Q(p_k)a_k^{\ot d}$ converges to $Q(p)a^{\ot d}$. Hence, $Q(p)a^{\ot d}=0$, which shows $p\in T_{n,d}$, which concludes the proof. 
\end{proof}

\begin{theorem}\label{thm:no-finite-convergence}
For $n=d=2$, the set $A_{n,d}$ from (\ref{eqAnd}) is not Zariski-closed. That is, the spectral hierarchy does not have generic finite convergence.
\end{theorem}	

\begin{proof}
Consider the bivariate polynomial 
\begin{align}\label{eqf2}
 f = x_1^4 + 2x_2^4 - x_1^2x_2^2 -2x_1x_2^3 = (x_1x_2-x_2^2)^2 + (x_1^2-x_2^2)^2 \in R_4.
\end{align}
So, $f_{\min}=0$ and $f$ has two minimizers $\pm{1\over \sqrt 2} (1,1)$ in the sphere $\mathbb S^1$. Hence, $f\in S_{2,2}$. Moreover, $f\not\in T_{2,2}$. Indeed, its maximally symmetric matrix representation reads
	$$  Q(f) = \begin{pmatrix} 1 &0 & 0 &-1/6\\ 
		0 &-1/6 & -1/6 &-1/2\\
		0 &-1/6 & -1/6 &-1/2\\
		-1/6 &-1/2 & -1/2 &2	
	\end{pmatrix}$$ 
(indexing it   by $ x_1^2, x_1x_2, x_2x_1, x_2^2 $),  and we have $Q(f)a^{\ot 2}\ne 0$ for   $a=(1,1)$.  Hence, $f\in S_{2,2}\setminus T_{2,2}$. Since the set $T_{2,2}$ is closed (by Lemma \ref{lem:S-closed}), there exists $\epsilon>0$ such that the ball $B_f(\epsilon)$ is contained in the complement of $T_{2,2}$, i.e., $B_f(\epsilon)\cap T_{2,2} =\emptyset$. Here, $B_f(\epsilon)=\{p\in R_4: \|p-f\|<\epsilon\}$ denotes the open ball   with center $f$ and radius $\epsilon$, with respect to the $\infty$-norm  on the sphere.
Hence, in view of Proposition \ref{prop:finite-conv-implies-eigenvalue}, we have 
$B_f(\epsilon)\cap S_{2,2}\subseteq A_{2,2}$. 
We now claim that 
\begin{align}\label{eqincl2}
B_f(\epsilon/2)\subseteq B_f(\epsilon)\cap S_{2,2} +\oR s^2 \subseteq A_{2,2},
\end{align}
where $B_f(\epsilon/2)$ is the   ball in $R_4$ centered at $f$ with radius $\epsilon/2$. From this follows immediately that the set $A_{2,2}$ is not Zariski-closed, which concludes the proof of the theorem.

The right most inclusion in (\ref{eqincl2}) follows from the fact (observed earlier) that finite convergence of the spectral hierarchy is preserved under shifting by a scalar multiple of the constant polynomial $s^2$. We now verify the left most inclusion. For this, let $p\in B_f(\epsilon/2)$, i.e., $\|p-f\|<\epsilon/2$. By evaluating at a minimizer $a\in\mathbb S^1$ of $f$, we get $|p(a)|<\epsilon/2$, implying $p_{\min}\le p(a)<\epsilon/2$. By evaluating at a minimizer $b\in \mathbb S^1$ of $p$, we get 
$-p_{\min} \le f(b)- p(b)<\epsilon/2$, and thus $|p_{\min}|<\epsilon/2$. Using the triangle inequality, we have
$\|p-p_{\min}s^2-f\|\le \|p-f\|+ |p_{\min}| < \epsilon/2+\epsilon/2=\epsilon$. This shows that 
$p-p_{\min}s^2\in B_f(\epsilon)$,  and thus $p\in B_f(\epsilon)\cap S_{2,2}+\oR s^2$, as desired.
\end{proof}

\begin{remark}\label{rem:no-finite-convergence}
Observe that the proof of Theorem \ref{thm:no-finite-convergence} relies   on the existence of a polynomial $f\in S_{2,2}\setminus T_{2,2}$. In other words, in order to show that the spectral hierarchy does not have generic finite convergence   for $n$-variate forms of degree $2d$, it suffices to exhibit one concrete form $F\in S_{n,d}\setminus T_{n,d}$. As we next observe, one can easily construct such a form of degree $4$ for any $n\ge 2$, simply by viewing the bivariate form $f$ from (\ref{eqf2}) as an $n$-variate form. 
{We expect that one can construct such $F\in S_{n,d}\setminus T_{n,d}$ and thus show non-genericity of finite convergence of the spectral bounds for any $d\ge 2$,  we leave showing this to further research.}
\end{remark}

%\begin{remark}\footnote{Maybe we can delete this remark?}
%	As a strengthening of \Cref{thm:no-finite-convergence} in the case $ n = 2 $, we can in fact show that for generic $ p\in \R[x_1,x_2]_{4} $, the spectral bounds do not have finite convergence: Indeed, the eigenvalues of the matrix $ Q(f) $ above for $ f = x_1^4 + 2x_2^4 - x_1^2x_2^2 -2x_1x_2^3 $ are equal to $ -0.3862, 0.0, 0.9752, 2.0776 $, after rounding to $ 4 $ digits. Hence, the zero-eigenspace is one-dimensional. The zero-eigenspace equals $ S^2(\R^2)^{\perp} $, so $ 0 $ is not an eigenvalue of $ Q(f) $ when restricted to the invariant subspace $ S^2(\R^n) $. We conclude that for $ p\in S $, $ \det(Q(p)|_{S^2(\R^n)}) $ is not the zero polynomial. This is a degree-3 polynomial in the coefficients of $ p $, whence $ U = \{p\in \R[x_1,x_2]_{4} \mid \det(Q(p)|_{S^2(\R^n)})\ne 0 \} $ is a Zariski open set. By \Cref{prop:finite-conv-implies-eigenvalue}, the spectral bounds do not converge finitely for generic $ p\in S $, as $ p_{\min} = 0 $ is not an eigenvalue of $ Q(p) $, if $ p\in U $. Hence, for ``almost every'' $ p\in K[x_1,x_2] $, the spectral bounds do not finitely converge. 
%\end{remark}

\begin{theorem}\label{thm:no-finite-convergence-n}
For $n\ge 2$ and $d=2$, the set $A_{n,d}$ from (\ref{eqAnd}) is not Zariski-closed. That is, the spectral hierarchy does not have generic finite convergence.
\end{theorem}

\begin{proof}
Let $F$ denote  the $n$-variate degree 4 form   obtained by viewing the bivariate form $f$ from (\ref{eqf2}) as an $n$-variate form. Then, its maximally matrix representation has a block-form
$$\MaxSym(F)=\left(\begin{matrix} 
\MaxSym(f) & 0 \cr 0 & 0 \end{matrix}\right),$$
where $\MaxSym(f)$ is indexed by the pairs $(i,j)\in [2]^2$ and the zero columns and rows are indexed by the pairs $(i,j)\in [n]^2$ with $i\ge 3$ or $j\ge 3$ (thus the analog situation as for the matrices in (\ref{eqmatMQ})). Clearly, $F_{\min}=f_{\min}=0$ and thus  $F\in S_{n,2}$. Moreover,  $F\not\in T_{n,2}$. For this, let $b\in\mathbb S^{n-1}$ be a zero of $F$; we show that $b^{\ot 2}\not\in \ker \MaxSym(F)$. For the vector $a=(b_1,b_2)^T$, we have  $f(a)=F(b)=0$ and thus $a$ is a scalar multiple of $(1,1)^T$. Hence, $\MaxSym(f)a^{\ot 2}\ne 0$, which implies $\MaxSym(F)b^{\ot 2} \ne 0$. 
 
As observed in Remark \ref{rem:no-finite-convergence}, the proof of Theorem \ref{thm:no-finite-convergence} can now be followed mutadis mutandis to show that the set $A_{n,2}$ is not Zariski-closed, showing that the spectral bounds do not have generic finite convergence for $n$-variate degree 4 forms. 
\end{proof}

\section{Further discussion}\label{secfinal}

\subsection*{Connections to the moment problem and polynomial optimization} 

{Real} de Finetti representations are elements of the set $ \mathcal{C}_d(\R^n) $ from (\ref{eq:Cd-definition}), i.e., convex combinations of matrices $ (u_iu_i^{T})^{\otimes d} $, where all $ u_i $ are real unit vectors.  
Elements of $ \mathcal{C}_d(\R^n) $ are maximally symmetric matrices, whereas elements of $ \mathcal{C}_d(\C^n) $ are in general not maximally symmetric. 
Therefore, the quantum de Finetti theorem cannot be transferred verbatim to real doubly symmetric matrices, as we saw in \Cref{prop:nonexistence-of-real-qdf}. Nevertheless, we saw that it is possible to make statements about maximally symmetric matrices (\Cref{theoDWr2}) and about real doubly symmetric matrices (\Cref{thm:banded-qdf}). The relevance of such results to polynomial optimization on the sphere should be clear: Doherty and Wehner \cite{Doherty_Wehner_2013} use their real QdF (Theorem \ref{theoDW}) to analyze the sos bounds; Lovitz and Johnston \cite{Johnston_Lovitz_Vijayaraghavan_2023} use the QdF (Theorem \ref{theoQdF}) to analyze their spectral bounds;  we saw how to analyze these spectral bounds  using \MoL{our real banded QdF}
(\Cref{thm:banded-qdf}).

The \emph{moment problem} addresses a fundamental question in real algebraic geometry and functional analysis  (see \cite{Schmudgen1991}), namely,    when a linear functional on the polynomial space has an (exact or approximate) representing measure, possibly with a given support. Real de Finetti representations address the approximate moment problem for the space of homogeneous polynomials and measures supported \emph{on the sphere}. So, the result of \Cref{theoDWr2} can be roughly paraphrased as follows (by combining  with \Cref{propsosP}):
If $L$ is a linear functional acting on degree $2r$ forms that is nonnegative on sums of squares \MoL{and satisfies  $L(s^r)=1$}, then for   $d\le r$ there exists a measure $\mu$ on the sphere whose (low degree) moments approximate well  the `pseudo-moments'  of $L$ up to degree $2d$, with an error term in $  {O}(1/r^2) $. 
In other words, real de Finetti theorems permit to show \MoL{quantitative results for the approximate moment problem on the sphere.}
This question has been addressed recently for the moment problem on general compact semi-algebraic sets by Baldi and Mourrain \cite{Baldi_Mourrain_2023} \MoL{(see also \cite{Baldi-Mourrain-Parusinski})}, who show quantitative results with polynomial dependency  of the form $O(1/r^c)$, for some unknown constant $c>0$. Showing an explicit polynomial dependency of the form $O(1/r^2)$ for some highly symmetric sets like the ball or the simplex is the subject of further research.

\begin{table}[H]
	\begin{tabular}{|c|c|c|c|}
		Result & Scope & complexity ub  & complexity lb \\
		\hline
		Diaconis-Freedman \cite{Diaconis_Freedman_1980_deFinetti}& probability distr.  & $ {O}(1/r)$ & $\Omega(1/r)$ \\
		Christandl et. al. \cite{Christandl_2007_deFinetti_oneandhalf} & doubly sym.   ($\C$)& $ {O}(1/r)$ & $\Omega(1/r)$ \\
		Doherty-Wehner \cite{Doherty_Wehner_2013}& max. sym.     ($\R$)  & $ {O}(1/r)$ & \textbf{none} \\
		\Cref{theoDWr2} & max. sym.    ($\R$) & $ {O}(1/r^2)$ &  \textbf{none} \\
		\Cref{thm:banded-qdf} & doubly sym.    ($\R$) & $ {O}(1/r)$ &  $ \Omega(1/r^2) $ \\
	\end{tabular}
	\smallskip \caption{Convergence rate for  de Finetti approximation results:  in rows 2-4, the second column concerns a matrix $M$, which is positive semidefinite with trace 1,   and has additional symmetry and  entries in $\C$ or $\R$ as indicated. The last two columns give upper and lower bounds for the complexity of the convergence rate.}\label{tab:de-finetti-comparison}
\end{table}

\subsection*{Complexity questions}
In Table \ref{tab:de-finetti-comparison} we gather known results on the convergence rate of the various de Finetti type results. As indicated there, no complexity lower bound is known for the result of \Cref{theoDWr2} and it would also be interesting to close the gap between $O(1/r)$ and $\Omega(1/r^2)$ for \Cref{thm:banded-qdf}.
Accordingly, it is also  an open question to settle what is the exact  convergence rate for the sos lower bounds $\sos_r(p)$ and the spectral bounds $\spec_r(p)$. For the spectral bounds, a complexity upper bound in $O(1/r )$ is known  \cite{Johnston_Lovitz_Vijayaraghavan_2023} and a complexity lower bound in $\Omega(1/r^2)$ is shown in this paper. 
For the sos bounds, only an upper bound in $O(1/r^2)$ is known from \cite{Fang_Fawzi_2020}, but {\em no complexity lower bound} is known as of today. \MoL{A notable result is for polynomial optimization on the hypercube $[-1,1]^n$, where Baldi and Slot \cite{Baldi-Slot} show a convergence analysis for the sos bounds in $O(1/r)$  and  a complexity lower bound in $\Omega(1/r^8)$.}

\subsection*{Acknowledgments}

The authors were supported by the Dutch Scientific Council (NWO)
grant OCENW.GROOT.2019.015 (OPTIMAL). A. Blomenhofer was also supported by the European Research Council (ERC) under Agreement 818761. 

A slightly shortened version of this manuscript (without Appendices A,B and D) has been accepted for publication in {\em SIAM Journal on Optimization.} We are grateful to the referees for their careful reading and many helpful suggestions that helped us improve the presentation as well as some of the results. In particular, we thank a referee for raising the question about generic finite convergence, which inspires Section \ref{sec:no-finite-convergence}. We also thank the journal editor for providing constructive feedback. Furthermore, we thank Benjamin Lovitz, Markus Schweighofer, and Lucas Slot for useful discussions.

\bibliography{bibPoP}
\bibliographystyle{acm}

\appendix
\section{\tcolblue{Some examples}}\label{appendix-example}

We give here a few small examples to illustrate the tensor notions that have been introduced in Section \ref{sec:prelims}.
First, we give an example to illustrate how permutations work on tensors: consider $v\in(\R^2)^{\ot 2}$ and the transposition $\sigma=(12)$, then, with respect to the ordering of $[2]^2$ as $11,12,21,22$
(corresponding to the standard basis $e_1\ot e_1,e_1\ot e_2,e_2\ot e_1,e_2\ot e_2$ of $(\R^2)^{\ot 2}$), we have
\begin{align*}
v=\left(\begin{matrix}v_{11} \cr v_{12} \cr v_{21} \cr v_{22}\end{matrix}\right),\quad v^{(12)}= \left(\begin{matrix}v_{11} \cr v_{21} \cr v_{12} \cr v_{22}\end{matrix}\right),\quad
\Pi_2(v)={1\over 2}(v+ v^{(12)}) =\left(\begin{matrix}v_{11} \cr (v_{12}+v_{21})/2 \cr (v_{12}+v_{21})/2 \cr v_{22}\end{matrix}\right).
\end{align*}
Indeed, we have $v^{(12)}_{i_1i_2}=v_{i_2i_1}$ for any  $i_1,i_2\in [2]$, which gives $v^{(12)}_{11}=v_{11},$ $ v^{(12)}_{22}=v_{22}$,  $v^{(12)}_{12}=v_{21},$ and $ v^{(12)}_{21}=v_{12}.$

As another example, for the tensor $v=e_1\ot e_1\ot e_2\ot e_2\in (\R^2)^{\ot 4}$, its projection $\Pi_4(v)$  onto the symmetric subspace $S^4(\R^2)$ reads
\begin{align}\label{eqexPi4}
\begin{split}
&\Pi_4(e_1\ot e_1\ot e_2\ot e_2)=
{1\over 6} \big(e_1\ot e_1\ot e_2\ot e_2 + e_1\ot e_2\ot e_1\ot e_2+\\
&e_1\ot e_2\ot e_2\ot e_1 + e_2\ot e_1\ot e_1\ot e_2 + e_2\ot e_1\ot e_2\ot e_1+ e_2\ot e_2\ot e_1\ot e_1\big),
\end{split}\end{align}
which can be checked by enumerating the permutations of $[4]$ and their action on $v$.

\medskip
Consider the matrix $I_2^{\ot 2} \in \R^{[2]^2\times [2]^2}$,  acting on $(\R^2)^{\ot 2}$. {It is not doubly symmetric (since, e.g., its $(12,12)$-entry equals 1 while its $(21,12)$-entry equals 0). Hence, it is also not maximally symmetric.}
{For illustration, we show below its projection $\Pi_2I_2^{\ot 2}\Pi_2=\Pi_2$ onto the space of doubly symmetric matrices  and its projection $\PiMS(I_2^{\ot 2})$ onto the space of maximally symmetric matrices (as given in relation (\ref{ex:IdMS})):
\begin{align}\label{eqprojI2} 
\Pi_2=\left( \begin{matrix} 1 & 0 & 0 & 0\cr
0 & 1/2 & 1/2 & 0\cr
0 & 1/2 & 1/2 & 0\cr
0 &0 &0 & 1
\end{matrix}\right), \quad
\PiMS(I_2^{\ot 2}) =\left(\begin{matrix} 1 & 0 & 0 & 1/3\cr
0 & 1/3 & 1/3 & 0\cr
0 & 1/3 & 1/3 & 0\cr
1/3 & 0 & 0 & 1\end{matrix}\right),
\end{align}
where   entries are indexed by $11,12,21,22$. }
To obtain the projection $\PiMS(I_2^{\ot 2})$ onto the space of maximally symmetric matrices, 
one can proceed as follows:
\begin{align*}
I_2^{\ot 2}= (e_1e_1^T+e_2e_2^T)\ot (e_1e_1^T+e_2e_2^T)& 
= (e_1\ot e_1) (e_1\ot e_1)^T + (e_2\ot e_2) (e_2\ot e_2)^T \\
& + (e_1\ot e_2) (e_1\ot e_2)^T + (e_2\ot e_1) (e_2\ot e_1)^T.
\end{align*}
Then, the matrix $(e_i\ot e_i)(e_i\ot e_i)^T$ is maximally symmetric for $i=1,2$. To obtain $\PiMS( (e_1\ot e_2) (e_1\ot e_2)^T)$, consider the associated tensor $e_1\ot e_2\ot e_1\ot e_2\in (\R^2)^{\ot 4}$, whose projection onto the symmetric subspace $S^4(\R^2)$ satisfies $\Pi_4(e_1\ot e_2\ot e_1\ot e_2)= \Pi_4(e_1\ot e_1\ot e_2\ot e_2)$, whose explicit value is given in (\ref{eqexPi4}). By reshaping this tensor as a matrix we get
$$
\PiMS((e_1\ot e_2) (e_1\ot e_2)^T)
={1\over 6} \left(\begin{matrix} 0 & 0 & 0 & 1\cr 0 & 1 & 1 & 0\cr 0& 1 & 1 & 0\cr 1& 0 & 0 & 0\end{matrix}\right).
$$
Clearly,
$\PiMS((e_2\ot e_1) (e_2\ot e_1)^T)=  \PiMS((e_1\ot e_2) (e_1\ot e_2)^T)$, and from this one gets the desired expression  for $\PiMS(I_2^{\ot 2})$ in (\ref{eqprojI2}) (and (\ref{ex:IdMS})).

To illustrate  the partial trace operation,  consider a matrix 
$$M=\left(\begin{matrix} A_{11} &\ldots &A_{1n}\\ \vdots &\cdots &\vdots \\  A_{n1} &\ldots &A_{nn}  \end{matrix}\right)\in \K^{[n]^2\times [n]^2}, \quad \text{ where } A_{ij} \in
\K^{n\times n}$$
and the entries of $M$ are indexed by $11,12,\ldots,1n,\ldots, n1,n2, \ldots, nn$. Then, we have
$$\tr_1(M)=\left(\begin{matrix} \tr(A_{11}) &\ldots &\tr(A_{1n})\\ \vdots &\cdots &\vdots \\  \tr(A_{n1}) &\ldots &\tr(A_{nn})  \end{matrix}\right)\in \K^{ n\times n},$$
where  we `trace out the second subsystem' to get $\tr_1(M)$. Recall that the index $ k $ in $ \Tr_k $ denotes the number of subsystems from the right, which are traced out. Hence, $ \Tr_2(M) = \Tr(M) $ is a scalar.

\section{\tcolblue{Proof of Lemma \ref{lemJL}}} \label{appendix-proof-lemma}

The result from Lemma \ref{lemJL} is essentially extracted from the proof of \cite[Theorem 4.1]{Johnston_Lovitz_Vijayaraghavan_2023}. Since it plays a crucial role to show \Cref{thm:banded-qdf}, we give a sketch of proof for clarity of exposition.
Assume $\tau\in \mathcal C_d(\C^n)$ with $\Tr(\tau )=1$, and set $\alpha:= \Tr(\PiMS(\tau))$ and $\alpha_d= {2^d\over {2d\choose d}}$.	We show that ${1\over \alpha}\PiMS(\tau) \in \mathcal C_d(\R^n)$  and $\alpha_d\le \alpha\le 1.$

Clearly, it suffices to show the result for $\tau=(uu^*)^{\ot d}$, where $u\in \C^n$ is a unit vector. Write $u=a+\bfi b$, where $a,b\in\R^n$ with $\|a\|^2+\| b\|^2=1$. By Lemma~\ref{lem:ppt-sym-is-real-rank-2}, $\PiMS(\tau)=\PiMS((aa^T+bb^T)^{\ot d})$ and thus $\alpha=\Tr(\PiMS((aa^T+bb^T)^{\ot d}))$.
The next step is  to exploit a result of Reznick \cite[Corollary 5.6]{Reznick_2013} for showing that ${1\over \alpha} \PiMS( (aa^T+bb^T)^{\ot d}))\in \mathcal C_d(\R)$.
	
Reznick \cite{Reznick_2013} shows that the bivariate homogeneous polynomial $ (t_1^2 + t_2^2)^d $ in variables $ t_1, t_2 $ can be written as a sum of powers of linear forms. More precisely, he gives explicit  real unit vectors $ v_k\in \R^2 $ ($k=0,1,\ldots,d$) such that 
\begin{align*}
	(t_1^2 + t_2^2)^d = \frac{\alpha_d\cdot 2^d}{d+1} \sum_{k = 0}^{d} \big\langle v_k, \begin{pmatrix} t_1 \\ t_2 \end{pmatrix} \big\rangle^{2d}.
\end{align*}	
By plugging in $ t_1 = a^{T}x $ and $ t_2 = b^{T}y $, one obtains that the degree $2d$ $n$-variate form $ ((a^{T}x)^2 + (b^{T}x)^2)^d =\langle (aa^T+bb^T)^{\ot d},(xx^T)^{\ot d}\rangle $ is a sum of $ 2d $-th powers of linear forms in $ x $,   of the form $\sum_{k=0}^d (c_k^Tx)^{2d}=\sum_{k=0}^d \langle (c_kc_k^T)^{\ot d}, (xx^T)^{\ot d}\rangle$ for some $c_k\in\R^n$ (obtained from $v_k,a, b$). So,
	$\langle  (a^{T}x)^2 + (b^{T}x)^2)^d), (xx^T)^{\ot d}\rangle 
	= \langle 	(\sum_{k=0}^d (c_kc_k^T)^{\ot d}, (xx^T)^{\ot d}\rangle. $
Hence, the maximally symmetric representation of the form  $ ((a^{T}x)^2 + (b^{T}x)^2)^d$ is equal to $\sum_{k=0}^d (c_kc_k^T)^{\ot d}$. Therefore, $\PiMS((aa^T+bb^T)^{\ot d})= \sum_{k=0}^d (c_kc_k^T)^{\ot d}$, which  thus belongs to 	the cone generated by $ \mathcal{C}_d(\R^n) $.	
So, after normalizing by the trace $ \alpha $, one obtains that $\PiMS(\tau)/\alpha \in \mathcal{C}_d(\R^n) $.  
 
To show the lower bound $ \alpha \ge \alpha_d$, the authors of \cite{Johnston_Lovitz_Vijayaraghavan_2023} use the eigenvalue decomposition to write $ aa^{T} + bb^{T} = t vv^{T} + (1-t) ww^{T} $ with $t\in [0,1]$ and  {orthogonal unit vectors } $ v $ and $ w $. 
They consider the function $ \varphi_{d}(t) = \Tr(\PiMS((tvv^T+(1-t)ww^T)^{\ot d}))$, a polynomial in $ t $, so that $\alpha=\varphi_d(t_0)$ for some $t_0\in [0,1]$. They show that it can be written as a sum $ \varphi_{d}(t) = \sum_{k = 0}^{\lfloor d/2\rfloor} a_k (t-1/2)^{2k} $ of even powers of $ (t-1/2) $ with nonnegative coefficients $ a_k \ge 0 $.	
From this, it readily follows that the minimum of $\varphi_d(t)$ for $t\in [0,1]$ is attained at $t=1/2$, with $\varphi_d(1/2) = \tr(\PiMS(I_2^{\ot d}))=2^{2d}{2d\choose d}^{-1}= \alpha_d\cdot 2^d$ (which also follows from \cite{Reznick_2013}). In the same way, the maximum value of $\varphi_d(t)$  is attained at $t\in\{0,1\}$ and is equal to $\varphi_d(1)=1$. 
The desired bounds $\alpha_d\le \alpha\le 1$ follow directly.

\section{\tcolblue{Proof of Proposition \ref{prop:jlv-relaxes-pmin}}}\label{appendix-proof-proposition}

Let $p\in R_{2d}$. Recall the definitions in (\ref{eqQrpM}) for the matrices $Q_r(p)$ and $M_r$ that represent, respectively, the polynomials $ps^{r-d}$ and $s^r$.

First, we show that $\spec_r(p)\le \sos_r(p)$. The matrix $Q_r(p)-\spec_r(p) M_r$ represents the polynomial $s^{r-d}p-\spec_r(p) s^r$ and  it is positive semidefinite (by definition of   $\spec_r(p)$). Hence, the polynomial $s^{r-d}p-\spec_r(p) s^r$ is sos, and thus $\spec_r(p)\le \sos_r(p)$, by \eqref{eqsosr-alternative}. 

We now show that $\spec_r(p)\le \spec_{r+1}(p)$. For this, we use the formulation (\ref{eqnu3}).
 Let $Y$ be an optimal solution for the formulation (\ref{eqnu3}) of the parameter $\spec_{r+1}(p)$; so $Y$ is $(r+1)$-doubly symmetric, $Y\succeq 0$ and $\langle M_{r+1},Y \rangle=1$. Set $X= \Tr_1(Y)$. Then, $X$ is $r$-doubly symmetric and positive semidefinite.
We claim that $\langle M_r, X\rangle =1$. Indeed,  
\begin{align*}
1& =\langle M_{r+1},Y\rangle= \langle \Pi_{r+1} (M\ot I_n^{\ot (r+1-d)})\Pi_{r+1}, Y\rangle 
= \langle M\ot I_n^{\ot (r+1-d)}, \Pi_{r+1}Y\Pi_{r+1}\rangle \\
& = \langle M\ot I_n^{\ot (r+1-d)}, Y\rangle 
= \langle M\ot I_n^{\ot (r-d)}, \Tr_1(Y)\rangle 
=\langle M_r, X\rangle,
\end{align*}
where, in the second line,  we use the fact that $Y$ is doubly symmetric for the first equality, the definition of the partial trace for the second equality, and the fact that $X=\Tr_1(Y)$ is doubly symmetric for the last equality. In the same way, we have $$\langle Q_{r+1}(p),Y\rangle = \langle Q(p)\ot I_n^{\ot (r+1-d)}, Y\rangle =
\langle Q(p)\ot I_n^{\ot (r-d)}, \Tr_1(Y)\rangle = \langle Q_r(p), X\rangle.$$
Hence, $X$ is a feasible solution for the formulation (\ref{eqnu3}) of $\spec_r(p)$ with objective value $\spec_{r+1}(p)$, which shows    $\spec_r(p)\le \spec_{r+1}(p)$.

\section{\tcolblue{The constant   $\gamma(Q(p))$ is not well-behaved}}\label{appendix-bad-gamma}

First, we discuss the relevance of establishing a performance analysis in terms of the range of values $p_{\max}-p_{\min}$ as is done in Theorem \ref{thm:pmax-pmin-spectral}.
This is indeed the common practice in continuous optimization, as discussed, e.g., in \cite{Vavasis1992,deKlerkLaurentParrilo2006}:
given $\epsilon>0$, a value $\val_\epsilon$ is called an $\epsilon$-approximation for the problem of  minimizing a function $p$ on a compact set $S$ if it satisfies $|p_{\min}-\val_\epsilon|\le \epsilon(p_{\max}-p_{\min})$, where $p_{\min}$ and $p_{\max}$ are the minimum and maximum values taken by $p$ on $S$. 
Such definition is motivated by the fact that it enjoys nice invariance properties, like translating the function by an (appropriate) constant or  scaling by a positive constant. 
When $S$ is the sphere, it would thus be desirable to have an error analysis that depends on the range of values $p_{\max}-p_{\min}$.
This is precisely what we do in Theorem  \ref{thm:pmax-pmin-spectral}. Note   that the error analysis for the sos bounds in Theorem \ref{theoFF} does indeed involve the range  $p_{\max}-p_{\min}$.

With this in mind, we investigate the parameter 
$$\gamma(Q(p))=\max\{\lambda_{\max}(Q(p))-\spec_d(p), p_{\min}-\lambda_{\min}(Q(p))\},$$ 
which enters the error analysis  in Theorem \ref{thm:analysis-spectral-hierarchy}. 
We show that this parameter does not behave well under translating $p$ by a multiple of $s^d$:
there does not exist a constant $C_{n,d}$ (depending only on $n$ and $d$) such that 
\begin{align}\label{eqconstant}
\gamma(Q(p))\le C_{n,d}(p_{\max}-p_{\min}) \quad \text{ for all } p\in R_{2d}.
\end{align}
Hence, one cannot directly derive  Theorem \ref{thm:pmax-pmin-spectral} from Theorem \ref{thm:analysis-spectral-hierarchy}.

\medskip
For this, given $p\in R_{2d}$ and   $a\in\R$,   consider the polynomial $p(a):= p+as^d\in R_{2d}$. Then, we have
$p(a)_{\max}-p(a)_{\min}=p_{\max}-p_{\min}$,  
$\spec_d(p(a))= \spec_d(p)-a$, and $Q(p(a))= Q(p)+a M$, where $M=\PiMS(I_n^{\ot d})$.
We disprove the existence of a constant $C_{n,d}$ satisfying (\ref{eqconstant}) by showing a lower bound on 
the parameter $\gamma(Q(p(a)))$ that is linear and monotone increasing in $a$  (for all large enough $a$).

We first consider the term $\lambda_{\max}(Q(p(a)))-\spec_d(p(a))$. 
Let $v$ be a unit eigenvector of $M$ for $\lambda_{\max}(M)$. Then,
$\lambda_{\max}(Q(p(a))) \ge v^T Q(p(a))) v= v^T Q(p) v +a \lambda_{\max}(M)$. 
Therefore,
\begin{align}\label{eqRmax}
\begin{split}
 \lambda_{\max}(Q(p(a)))-\spec_d(p(a))& = \lambda_{\max}(Q(p(a))) -\spec_d(p)-a\\
&\ge   a (\lambda_{\max}(M) -1) + v^T Q(p) v -\spec_d(p).
\end{split}\end{align}
We now consider the term $p(a)_{\min}-\lambda_{\min}(Q(p(a)))$. By taking a unit eigenvector $w$ of $M$ for $\lambda_{\min}(M)$,   
$\lambda_{\min}(Q(p(a))) \le w^T Q(p(a))w= w^TQ(p)w +a\lambda_{\min}(M)$ holds, and  
\begin{align}\label{eqRmin}
\begin{split}
p(a)_{\min}-\lambda_{\min}(Q(p(a))) &=p_{\min}-a -\lambda_{\min}(Q(p(a)))\\
& \ge a(1-\lambda_{\min}(M)) +p_{\min}-w^T Q(p)w.
\end{split}\end{align}
Therefore, $\gamma(Q(p(a)))\ge \varphi(a)$, where 
$$\varphi(a)= \max\{ a (\lambda_{\max}(M) -1) + v^T Q(p) v -\spec_d(p), a(1-\lambda_{\min}(M)) +p_{\min}-w^T Q(p)w\}$$
is the maximum of the lower bounds in (\ref{eqRmax})-(\ref{eqRmin}). 
{Since  $\lambda_{\min}(M)\le 1\le \lambda_{\max}(M)$ with $\lambda_{\max}(M)-\lambda_{\min}(M)>0$,  we obtain that $\varphi(a)$ is linear monotone increasing in $a$ for all large enough $a$, which concludes the proof.
}

Note that one can analogously show that the parameter $\|Q(p(a))\|_{\infty}$, which appears in  Theorem \ref{theoJLV}, {also has this undesirable feature} that it is monotone increasing for large enough $a$; the proof is the same (even simpler).

\end{document}